\numberwithin{equation}{section}
\theoremstyle{plain}
\newtheorem{thm}{Theorem}
\newtheorem{lem}[thm]{Lemma}
\newtheorem{prop}[thm]{Proposition}
\newtheorem{cor}[thm]{Corollary}
\newtheorem{rem}[thm]{Remark}
\newtheorem{ass}[thm]{Assumption}
\theoremstyle{definition}
\newtheorem{ex}[thm]{Example}
\numberwithin{thm}{section} 
\newcommand{\R}{\mathbb{R}}
\newcommand{\Rn}{{\R^n}}
\newcommand{\BE}{\mathcal{B}(E)}
\newcommand{\EBE}{(E,\BE)}
\newcommand{\BEstar}{\mathcal{B}^*(E)}
\newcommand{\BeE}{\mathcal{B}^e(E)}
\newcommand{\EAm}{(m)}
\newcommand{\LpE}{L^p\EAm}
\newcommand{\N}{\mathbb{N}}
\newcommand{\C}{C}
\newcommand{\E}{\mathcal{E}}
\newcommand{\D}{\mathcal{D}}
\newcommand{\DAp}{\D(A_p)}
\newcommand{\DE}{\D(\E)}
\newcommand{\EDE}{(\E,\DE )}
\newcommand{\F}{\mathcal{F}}
\newcommand{\Prb}{\mathbb{P}}
\newcommand{\Px}{\Prb_{x}}
\newcommand{\Cem}{\Delta}
\newcommand{\excm}{\tilde{m}}
\newcommand{\Pexcm}{\Prb_{\excm}}
\newcommand{\Pt}{(P_t)}
\newcommand{\Pbrevet}{(\st{P}_t)}
\newcommand{\Pbstt}{(\bst{P}_t)}
\newcommand{\Xt}{X}
\newcommand{\Xbrevet}{\st{X}}
\newcommand{\Xbstt}{\bst{X}}
\newcommand{\Ft}{(\F_t)}
\newcommand{\Mt}{M}
\newcommand{\At}{A}
\newcommand{\Bt}{B}
\newcommand{\Pxx}{(\Px)}
\newcommand{\intb}{\int\limits}
\newcommand{\iintb}{\iint\limits}
\newcommand{\intE}{\intb_E}
\newcommand{\intRn}{\intb_\Rn}
\newcommand{\tim}{\times}
\newcommand{\setms}{\setminus}
\newcommand{\intEx}{\intb_{\hspace{1em}\mathclap{E\setms\{x\}}}}
\newcommand{\intEz}{\intb_{\hspace{1em}\mathclap{E\setms\{z\}}}}
\newcommand{\intEXs}{\intb_{\hspace{1em}\mathclap{E\setms\{X_s\}}}}
\newcommand{\diag}{d}
\newcommand{\EEd}{E\times E\setms\diag}
\newcommand{\iintEEd}{\iintb_{\mathclap{\EEd}}}
\newcommand{\iintqEEd}{\iintb_{\hspace{0.7em}\mathclap{\EEd}}}
\newcommand{\dd}{d}
\newcommand{\dms}{\,\dd m}
\newcommand{\dxs}{\,\dd x}
\newcommand{\dys}{\,\dd y}
\newcommand{\dts}{\,\dd t}
\newcommand{\dss}{\,\dd s}
\newcommand{\dx}{\dd x}
\newcommand{\dy}{\dd y}
\newcommand{\dz}{\dd z}
\newcommand{\dt}{\dd t}
\newcommand{\ds}{\dd s}
\newcommand{\dm}{\dd m}
\newcommand{\dr}{\dd r}
\newcommand{\dtm}{\dd \excm}
\newcommand{\dxdy}{(\dx,\dy)}
\newcommand{\inft}{{\infty}}
\newcommand{\ph}{\varphi}
\newcommand{\Gampri}{\widetilde{\Gamma}}
\newcommand{\Ev}{\mathds{E}}
\DeclareMathOperator{\ind}{\mathds{1}}
\DeclareMathOperator*\sgn{sgn}
\DeclareMathOperator*\supp{supp}
\DeclareMathOperator*\Capa{Cap}
\newcommand{\ignore}[1]{}
\newcommand{\pow}[1]{^{\langle #1\rangle}}
\newcommand{\abs}[1]{\left\lvert #1 \right\rvert}
\newcommand{\Bnorm}[1]{\left\lVert #1 \right\rVert}
\newcommand{\BnormLp}[1]{\Bnorm{ #1 }_{p}}
\newcommand{\norm}[1]{\mathopen{\lVert} #1 \mathclose{\rVert}}
\newcommand{\normLp}[1]{\norm{ #1 }_{p}}
\newcommand{\normL}[2]{\norm{ #1 }_{ #2 }}
\newcommand{\LE}[1]{L^{ #1 } \EAm}
\newcommand{\st}[1]{\check{ #1 }}
\newcommand{\bst}[1]{\hat{ #1 }}
\newcommand{\subpr}[1]{{ #1 }^\alpha}
\newcommand{\equi}[1]{\overset{ #1 }{\simeq}}
\author[M.~Gutowski]{Micha\l\ Gutowski }
\title{Littlewood--{Paley} estimates for pure-jump {Dirichlet} forms }
\thanks{The research was supported by National Science Centre, Poland, 2023/49/B/ST1/04303.}
\begin{document}

\begin{abstract}
    We employ the recent generalization of the Hardy--Stein identity to extend the previous Littlewood--Paley estimates to general pure\nobreakdash-jump Dirichlet forms.
    The results generalize those for symmetric pure-jump L\'evy processes in Euclidean spaces.
    We also relax the assumptions for the Dirichlet form necessary for the estimates used in previous works.
    To overcome the difficulty that It\^o's formula is not applicable, we employ the theory of Revuz correspondence and additive functionals.
    Meanwhile, we present a few counterexamples demonstrating that some inequalities do not hold in the generality considered in this paper.
    In particular, we correct errors that appear in previous works.
\end{abstract}

\maketitle

\section{Introduction}
\label{sec:introduction}

Consider a regular pure\nobreakdash-jump Dirichlet form $\EDE$ and let $\Xt$ be the associated symmetric pure\nobreakdash-jump Hunt process. By $J$ we denote the jumping kernel of $\EDE$ (or $\Xt$). Let $\Pt$ be the associated semigroup. In \cite{bbl}, Ba{\~n}uelos, Bogdan, and Luks investigated the following \emph{square functions} (or \emph{Littlewood\nobreakdash--Paley functions}):
\begin{align}
    \label{eq:G_fun}
    G(x) & := \left(
        \frac{1}{2} \int\limits_0^\inft \intEx ( P_tf(y) -  P_tf(x))^2 \,J(x,\dy)\dt
    \right)^{1/2}
    , \\ \label{eq:Gprim_fun}
    \widetilde{G}(x) & := 
    \left(
        \int\limits_0^\inft \intEx ( P_tf(y) -  P_tf(x))^2 \,\chi( P_tf(x), P_tf(y))J(x,\dy)\dt
    \right)^{1/2}.
\end{align}
Here, $f$ is a fixed real-valued function in the $L^p$\nobreakdash-space for $1<p<\infty$, and $\chi(s, t)=\ind_{\{|s| > |t|\}} + \tfrac{1}{2} \ind_{\{|s| = |t|\}}$. The authors of \cite{bbl} studied the \emph{Littlewood\nobreakdash--Paley estimates} of those square functions. That is, they investigated the existence of constants $c_p, C_p>0$ such that
\begin{align}
\label{eq:Littlewood-Paley-est-intro}
    c_p \normLp{f}
    \le
    \normLp{G}
    \le
    C_p \normLp{f},
\end{align}
\begin{align}
\label{eq:Littlewood-Paley-est2-intro}
    c_p \normLp{f}
    \le
    \normLp{\widetilde{G}}
    \le
    C_p \normLp{f}.
\end{align}

While the function $G$ seems more natural, it lacks an upper bound; see Example~2 of \cite{bbl}.
Therefore, the authors also considered its modification $\widetilde{G}$.
It turns out that if $X$ is a symmetric pure-jump L\'evy process satisfying the Hartman--Wintner condition, then $\widetilde{G}$ satisfies \eqref{eq:Littlewood-Paley-est2-intro} for some $c_p, C_p>0$ and for the entire range of $1<p<\infty$.

In \cite{bbl}, two distinct techniques were employed to establish this result: the Hardy--Stein identity and the Burkholder--Davies--Gundy inequality.
The latter involves the parabolic martingale $M_t := P_{T-t}f(X_t) - P_t(X_0)$.
The authors applied It\^o's formula to relate its quadratic variation $\langle M \rangle$ to the third square function:
\begin{align}
    \label{eq:H_fun}
    H(x) & := \left(
        \frac{1}{2} \int\limits_0^\inft \intE \intEz ( P_tf(y) -  P_tf(z))^2 \,J(z,\dy)P_t(x,\dz)\dt
    \right)^{1/2}.
\end{align}
In the next step, the authors established the upper bound
$\normLp{H}
\le
C_p \normLp{f}$
for $2 \le p <\infty$. Next, they demonstrated the inequality $\widetilde{G}\le 2 H$ utilizing the ``time doubling''; see the proof of Lemma~4.2 in \cite{bbl}.
Finally, by combining these observations, they derived a similar upper bound for $\widetilde{G}$.
The lower bound for $1<p<2$ was derived employing the ultracontractivity of the semigroup associated with a L\'evy process.

In Li and Wang \cite{lw}, an attempt was made to apply a similar approach under weaker assumptions; however, the reasoning contains an error.
It turns out that in the greater generality, the inequality $\widetilde{G}\le 2 H$ does not hold. In \cite{lw}, the proof of this inequality relies on an unjustified assumption that the  ``time doubling'' still can be employed and that $P_t[P_tf(k(\cdot,y))](x)$ is equal to $P_{2t}f(k(x,y))$ for some function $k$. For more details, we refer to Remark~\ref{rem:GlessH} below.
In the present work, we provide a counterexample demonstrating that the above error is irreparable; see Example~\ref{cex:Brown} below.

The Hardy\nobreakdash--Stein identity in a non\nobreakdash-local case was proved in \cite{bbl}, where symmetric pure\nobreakdash-jump L\'evy processes were considered. See also the non\nobreakdash-symmetric extension in Ba{\~n}uelos and Kim \cite{bk} as well as the polarized version presented in Bogdan, Gutowski, and Pietruska\nobreakdash-Pa{\l}uba \cite{bgp}.
See also Bogdan, Kutek, and Pietruska-Pa{\l}uba~\cite{bkp}.
The Hardy\nobreakdash--Stein identity was originally established by Stein in his book \cite[pp. 86--88]{stein-singular} for the Laplace operator. His approach was based essentially on the chain rule:
$\Delta u^p = p(p-1)u^{p-2}|\nabla u|^2+pu^{p-1}\Delta u$.
Stein applied this identity to establish Littlewood\nobreakdash--Paley estimates for square functions associated with the Laplace operator.

Recent papers derive a generalization of the Hardy--Stein identity.
In this context, we refer to Gutowski \cite{g} and Gutowski and Kwaśnicki \cite{gk}.
The main goal of the present work is to employ the aforementioned generalization to extend previous Littlewood\nobreakdash--Paley estimates for a broader class of pure-jump processes (and Dirichlet forms).
While \cite{lw} suggests that:
\emph{It seems that such an identity depends heavily on the characterisation of L\'evy processes, and may not hold for general jump processes} (see p.~424 therein),
our results demonstrate that the Hardy--Stein identity remains valid in a more general setting.
The aforementioned generalization allows us to extend previous results not just to an extended class of Markov processes on the Euclidean space $\Rn$, but also to a wider variety of topological spaces $E$.

We want to focus on a general pure\nobreakdash-jump Dirichlet form possessing a jumping kernel.
Consequently, It\^o's formula is not applicable, and we do not assume the ultracontractivity of the semigroup. Instead, we employ the Revuz correspondence to establish a link between the quadratic variation $\langle M \rangle$ and the square function $H$.
Nevertheless, the reasoning presented in this article draws extensively on the approach outlined in \cite{bbl}.

The Revuz correspondence describes the connection between additive functionals and a certain class of Radon measures.
In particular, in the case of positive continuous additive functionals (abbreviated as PCAF) this correspondence is a bijection.
The Revuz correspondence has been the subject of extensive study. Notable contributions include the works of Getoor \cite{Getoor}, \cite{gs_Naturality}, as well as Fitzsimmons and Getoor \cite{fg88, fg_smooth}. Additionally, we refer to Section 75 of Sharpe’s book \cite{Sharpe}.
Moreover, a relatively recent paper Li and Ying \cite{BivariateRevuz} extends previous results to the case of bivariate measures.

Since the parabolic martingale $\Mt$ is an additive functional with respect to the space\nobreakdash-time process $(t,X_t)$,
its quadratic variation $\langle M \rangle$ is a PCAF.
In the present paper, we characterize the Revuz correspondence in this specific case; see Lemma~\ref{lem:RevuzST} in the next section.

Summarizing the results presented in this paper, we obtain the following Littlewood\nobreakdash--Paley estimates for $\widetilde{G}$ in the generality we consider. In particular, the previously mentioned Example~\ref{cex:Brown} demonstrates that, in general, these estimates cannot be established for the entire range of $1<p<\infty$.
\begin{thm}
    Let $1<p<\infty$ and $f\in\LpE$. Under the strong stability assumption (i.e. $\lim_{T\to\inft} \normLp{P_Tf} = 0$)
    there are constants $c_p, C_p>0$ such that
    \begin{align*}
        c_p \normLp{f}
        \le
        \normLp{\widetilde{G}},
        \quad
        2\le p< \infty,
    \end{align*}
    and
    \begin{align*}
        \normLp{\widetilde{G}}
        \le
        C_p \normLp{f},
        \quad
        1<p\le 2.
    \end{align*}
    However, there does not exist a universal constant $\tilde{C}_p>0$ such that
    \begin{align}
    \label{neq:Gprim_unbound}
        \normLp{\widetilde{G}} \le \tilde{C}_p \normLp{f},
        \quad
        2<p<\infty.
    \end{align}
\end{thm}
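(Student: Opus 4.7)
The strategy is to derive both positive bounds from a single tool: the polarized generalization of the Hardy--Stein identity (the extension of \cite{bgp} to the present setting, in the spirit of \cite{g, gk}). Under strong stability, the identity should read
\begin{align*}
    \normLp{f}^p = C_p \int_0^\inft \iintEEd (P_tf(y) - P_tf(x))^2\, \chi(P_tf(x), P_tf(y))\, |P_tf(x)|^{p-2}\, J(x,\dy)\, m(\dx)\, \dt,
\end{align*}
where on the support of $\chi$ one has $|P_tf(x)| \ge |P_tf(y)|$, so $|P_tf(x)|^{p-2}$ is well defined whenever the integrand is nontrivial. The second ingredient is Stein's maximal theorem for symmetric sub-Markovian semigroups: the parabolic maximal function $f^{\ast}(x) := \sup_{t \ge 0}|P_tf(x)|$ satisfies $\normLp{f^{\ast}} \le c_p \normLp{f}$ for $1 < p < \inft$.

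For the lower bound in the range $2 \le p < \inft$, I would estimate $|P_tf(x)|^{p-2} \le f^{\ast}(x)^{p-2}$ (valid since $p-2 \ge 0$) and pull this factor out of the $(y,t)$-integrals, obtaining $\normLp{f}^p \le C_p \int_E f^{\ast}(x)^{p-2}\, \widetilde{G}(x)^2\, m(\dx)$. Hölder with exponents $p/(p-2)$ and $p/2$, followed by Stein's inequality, then yields $c_p \normLp{f} \le \normLp{\widetilde{G}}$. For the upper bound in the range $1 < p \le 2$, the same identity is used in reverse. On $\{\chi > 0\}$, the trivial factorization $1 = |P_tf(x)|^{2-p}|P_tf(x)|^{p-2}$ combined with $|P_tf(x)|^{2-p} \le f^{\ast}(x)^{2-p}$ (using $2-p \ge 0$) gives $\widetilde{G}(x)^2 \le f^{\ast}(x)^{2-p}\, I(x)$, where $I(x)$ denotes the inner $(y,t)$-integral of the Hardy--Stein integrand. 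Raising to the power $p/2$, integrating over $x$, and applying Hölder with exponents $2/(2-p)$ and $2/p$ produces $\normLp{\widetilde{G}}^p \le \normLp{f^{\ast}}^{p(2-p)/2}\, (C_p^{-1}\normLp{f}^p)^{p/2}$; Stein's inequality then balances the exponents to $p$, yielding $\normLp{\widetilde{G}} \le C_p' \normLp{f}$.

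The failure of \eqref{neq:Gprim_unbound} for $2 < p < \inft$ is the content of the previously mentioned Example~\ref{cex:Brown}: one exhibits a specific pure-jump Dirichlet form and a family of test functions along which $\normLp{\widetilde{G}} / \normLp{f}$ is unbounded, ruling out any universal upper bound in this range.

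The main technical obstacle is the first step: rigorously establishing the polarized Hardy--Stein identity in the generality of arbitrary regular pure-jump Dirichlet forms, where It\^o's formula is unavailable. As the introduction indicates, this is accomplished via the Revuz correspondence applied to the space--time process, encoded in Lemma~\ref{lem:RevuzST}, which substitutes for the missing It\^o expansion and allows one to identify the relevant PCAF with the $(y,t)$-integrand above. The strong-stability hypothesis $\normLp{P_Tf} \to 0$ is precisely what permits the passage from the finite-horizon version of the identity to the $[0,\inft)$ integral; once the identity is secured, the remaining Hölder and maximal-inequality manipulations are elementary and, notably, succeed in each direction only because the corresponding exponent ($p-2$ or $2-p$) is nonnegative.
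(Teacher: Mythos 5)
Your proposal is correct and follows essentially the same route as the paper: the Hardy--Stein identity rewritten in terms of $\Gampri[P_tf]\,|P_tf|^{p-2}$ (the paper's \eqref{eq:iiFp_p1}, \eqref{neq:HS_12_bezSS}, \eqref{sim:HS_2pinf}), Stein's maximal inequality, and H\"older with exponents $p/(p-2)$, $p/2$ (resp.\ $2/(2-p)$, $2/p$), with Example~\ref{cex:Brown} supplying the failure of the upper bound for $p>2$; this matches the proofs of Theorems~\ref{thm:Gprim_less_12} and~\ref{thm:Gprim_less_2inf}. Two small inaccuracies: what you display as an exact identity is really only a two-sided comparison, obtained from the genuine Hardy--Stein identity via the Bregman-divergence estimate \eqref{sim:Fp_sim_Gp}; and the identity is not established through Lemma~\ref{lem:RevuzST} --- the Revuz/space-time machinery is used only for the martingale treatment of $H$, while the Hardy--Stein identity for general symmetric Hunt processes is imported from \cite{gk} and \cite{g}. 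Neither point affects the validity of your argument.
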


In light of the impossibility of obtaining the estimate for the square function $\widetilde{G}$, we then proceed to investigate the function $H$ and obtain the following results.
\begin{thm}
    Let $1<p<\infty$ and $f\in\LpE$. Under the strong stability assumption
    and the conservativeness assumption (i.e. $P_t1=1$ a.e.)
    there are constants $c_p, C_p>0$ such that
    \begin{align*}
        c_p \normLp{f}
        \le
        \normLp{H},
        \quad
        3\le p<\infty,
    \end{align*}
    and
    \begin{align*}
        \normLp{H}
        \le
        C_p \normLp{f},
        \quad
        2\le p< \infty.
    \end{align*}
    Additionally, if $f\in\LpE\cap\LE{2}$, then the constant $c_p$ can be chosen so that
    \begin{align*}
        c_p \normLp{f}
        \le
        \normLp{H},
        \quad
        1<p\le 2.
    \end{align*}
    However, there does not exist a universal constant $\tilde{C}_p>0$ such that
    \begin{align}
    \label{neq:Hprim_unbound}
        \normLp{H}
        \le
       \tilde{C}_p \normLp{f},
        \quad
        1< p< 2.
    \end{align}
\end{thm}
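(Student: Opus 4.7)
The approach follows the martingale method of \cite{bbl}, with the Revuz correspondence (Lemma~\ref{lem:RevuzST}) playing the role that It\^o's formula played there. Fix $T>0$ and introduce the parabolic martingale $M_t := P_{T-t}f(X_t) - P_T f(X_0)$, $0\le t\le T$. As an additive functional of the space--time process, $M$ has a predictable quadratic variation $\langle M\rangle$ that is a PCAF; writing $g_t(z) := \intEz(P_tf(y)-P_tf(z))^2 J(z,\dy)$, Lemma~\ref{lem:RevuzST} gives
\[
    \langle M\rangle_T = \int_0^T g_{T-s}(X_s)\,\ds,
    \qquad
    2\,H(x)^2 = \Ev_x\Bigl[\int_0^\inft g_t(X_t)\,\dt\Bigr].
\]
Conservativeness makes $m$ invariant and, together with symmetry, ensures that $(X_{T-t})_{t\in[0,T]}$ and $(X_t)_{t\in[0,T]}$ have the same law under $\Pm$. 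Combined with the change of variable $t=T-s$ this yields, for every Borel $\phi\ge 0$,
\[
    \Ev_m[\phi(\langle M\rangle_T)] = \Ev_m\Bigl[\phi\Bigl(\int_0^T g_t(X_t)\,\dt\Bigr)\Bigr].
\]
Taking $\phi(u)=u^{p/2}$, conditioning on $X_0$, and applying Jensen's inequality produces the two-sided bridge
\[
    \normLp{H}^p \le 2^{-p/2}\,\Ev_m[\langle M\rangle_\inft^{p/2}] \;\; (p\ge 2),
    \qquad
    \Ev_m[\langle M\rangle_\inft^{p/2}] \le 2^{p/2}\,\normLp{H}^p \;\; (1<p\le 2).
\]

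For the upper bound $\normLp{H}\le C_p\normLp{f}$ with $p\ge 2$, combine the first Jensen bound above with the Burkholder--Davis--Gundy inequality for the purely discontinuous martingale $M$ in the form $\Ev_m[\langle M\rangle_T^{p/2}] \le C_p\,\Ev_m[(M^*_T)^p]$ (via a Lenglart-type domination of $\langle M\rangle$ by $[M]$ for $p\ge 2$), Doob's maximal inequality, and the identity $M_T = f(X_T) - P_T f(X_0)$: conservativeness with symmetry gives $\Ev_m[|f(X_T)|^p] = \normLp{f}^p$, while strong stability forces $\normLp{P_Tf}\to 0$ as $T\to\inft$. For the lower bound $c_p\normLp{f}\le\normLp{H}$ with $1<p\le 2$, combine the reverse Jensen bound with the reverse chain $\Ev_m[|M_T|^p]\le\Ev_m[(M^*_T)^p]\le C_p\,\Ev_m[\langle M\rangle_T^{p/2}]$ (now using the $p\le 2$ direction $[M]\le\langle M\rangle$ in $L^{p/2}$); the extra hypothesis $f\in\LE{2}$ is needed so that $M\in L^2$, legitimising BDG at these low moments and the passage $T\to\inft$.

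The lower bound for $p\ge 3$ is the main obstacle, since Jensen now points the wrong way, so I plan to bypass $\langle M\rangle$ and appeal directly to the Hardy--Stein identity of \cite{g,gk}. Under strong stability it represents $\normLp{f}^p$, up to a constant, as
\[
    \int_0^\inft \iintEEd (P_tf(y)-P_tf(z))^2\,(|P_tf(z)|\vee|P_tf(y)|)^{p-2}\,J(z,\dy)\,m(\dz)\,\dt.
\]
Dominating $(|a|\vee|b|)^{p-2}\le|a|^{p-2}+|b|^{p-2}$ and exploiting the symmetry of $J(z,\dy)\,m(\dz)$ under $(y,z)\leftrightarrow(z,y)$ reduces the right-hand side to a constant multiple of $\int_0^\inft\int|P_tf|^{p-2}\,g_t\,\dm\,\dt$. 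The restriction $p\ge 3$ now enters critically: for $p\ge 3$ the function $u\mapsto u^{p-2}$ is convex on $[0,\inft)$, so $|P_tf|^{p-2}\le (P_t|f|)^{p-2}\le P_t(|f|^{p-2})$. The $m$-symmetry of $\Pt$ then moves $P_t$ off $|f|^{p-2}$ onto $g_t$, producing $2\int |f|^{p-2}\,H^2\,\dm$. H\"older's inequality with exponents $p/(p-2)$ and $p/2$ bounds this by $2\,\normLp{f}^{p-2}\normLp{H}^2$, and cancelling $\normLp{f}^{p-2}$ delivers the desired inequality. The counterexample \eqref{neq:Hprim_unbound} is constructed in the spirit of Example~\ref{cex:Brown}: the additional averaging by $P_t(x,\dz)$ in the definition of $H$ makes it ``spread out'' far more than $\widetilde G$, and one exhibits an $f\in\LpE$ for which $\normLp{H}=\inft$ for any prescribed $1<p<2$. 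The genuinely delicate step is the convexity manoeuvre for $p\ge 3$, which is exactly what fails for $p<3$ and explains the gap in the theorem.
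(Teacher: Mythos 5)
Your three quantitative bounds follow essentially the paper's own route: the $p\ge2$ upper bound and the $1<p\le2$ lower bound via the parabolic martingale, the Revuz identification of $\langle M\rangle$, self-duality under conservativeness, Jensen's inequality, and Burkholder--Davis--Gundy combined with the one-sided comparisons \eqref{eq:brackets-}--\eqref{eq:brackets+}; and the $p\ge3$ lower bound via the Hardy--Stein identity, convexity of $u\mapsto u^{p-2}$, the symmetry of $P_t$, and H\"older. Two points, however, are genuine gaps.

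First, the identity $\langle M\rangle_T=\int_0^T 2\Gamma[P_{T-s}f](X_s)\,\ds$ is not a direct consequence of Lemma~\ref{lem:RevuzST} alone: one must compute the Revuz measure of $\langle\st{M}\rangle$, and that computation (Theorem~\ref{thm:sharpMt} and Corollary~\ref{cor:sharpMt}) is carried out only for $f\in\DE$, since the dominated-convergence step there is controlled by $\E[f]<\infty$. Your argument therefore establishes the bounds only for $f\in\DE$ and never passes to general $f\in\LpE$. The paper does this by Fatou's lemma with an a.e.-convergent sequence from $\DE\cap\C_c(E)$ for the upper bound, and by approximating $f$ by $P_sf$ together with monotone convergence and Jensen for the lower bound --- and the latter is precisely where the hypothesis $f\in\LE{2}$ enters (it guarantees $P_sf\in\DE$), not, as you suggest, to ``legitimise BDG at low moments''.

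Second, the negative statement \eqref{neq:Hprim_unbound} is one of the four claims of the theorem and your proposal does not prove it; ``in the spirit of Example~\ref{cex:Brown}'' is not a construction, and that example is the wrong template: it shows $\normLp{\widetilde{G}}/\normLp{f}$ unbounded for $p>2$, a phenomenon driven by the cut-off $\chi$ concentrating the form at a single site. The failure of the upper bound for $H$ when $1<p<2$ has a different source: $H$ is built from $\Gamma$ rather than $\Gampri$, so, like $G$, it blows up for suitably singular $f$. The paper's Example~\ref{cex:Cauchy2} takes the Cauchy process on $\Rn$ and $f(x)=|x|^{-(n+1)/2}\ind_{B(0,1)}(x)$, which lies in $\LpE$ for $1<p<2n/(n+1)$, and shows by scaling that the time integral defining $H(x)^2$ already diverges as $t\to0^+$, whence $H\equiv\inft$. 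Your heuristic that the extra averaging by $P_t(x,\dz)$ is what makes $H$ too large points in the wrong direction; that averaging is harmless (it is what rescues $H$ for $p>2$), and the divergence comes from the growth of $\Gamma[P_tf]$ near $t=0$.
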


For the same reason that no upper bound exists for the function $G$, there is also no upper estimate for the function $H$; see Example~\ref{cex:Cauchy2} below.
Therefore, at the later stage we study the fourth square function given by
\begin{align}
    \label{eq:Hprim_fun}
    \widetilde{H}(x) & := \left(
        \int\limits_0^\inft \intE \intEz ( P_tf(y) -  P_tf(z))^2 \,\chi( P_tf(z), P_tf(y))J(z,\dy)P_t(x,\dz)\dt
    \right)^{1/2}.
\end{align}

In the present work, we derive the following Littlewood\nobreakdash--Paley estimates for the function $\widetilde{H}$.
\begin{thm}
    Let $1<p<\infty$ and $f\in\LpE$. Under the strong stability assumption
    there is a constant $c_p>0$ such that
    \begin{align*}
        c_p \normLp{f}
        \le
        \normLp{\widetilde{H}},
        \quad
        3\le p< \infty.
    \end{align*}
    Additionally, if we assume the conservativeness, then there is a constant $C_p>0$ such that
    \begin{align*}
        \normLp{\widetilde{H}}
        \le
        C_p \normLp{f},
        \quad
        2\le p < \infty.
    \end{align*}
\end{thm}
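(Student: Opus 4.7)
The plan is to establish the two bounds separately, following the patterns set by the preceding two theorems. The upper bound for $\widetilde H$ on $2 \le p < \infty$ comes essentially for free: since $0 \le \chi(s,t) \le 1$, a pointwise comparison of the integrands in \eqref{eq:Hprim_fun} and \eqref{eq:H_fun} yields $\widetilde H(x)^2 \le 2 H(x)^2$, so $\widetilde H \le \sqrt{2}\,H$. Combined with the already-established bound $\|H\|_p \le C_p\|f\|_p$ for $2 \le p < \infty$ under conservativeness, this immediately gives $\|\widetilde H\|_p \le \sqrt{2}\,C_p \|f\|_p$ on the full range claimed.

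For the lower bound on $[3,\infty)$ under strong stability, my plan is to mimic the derivation of $c_p\|f\|_p \le \|\widetilde G\|_p$ for $p \ge 2$, adapting the argument to account for the extra $P_t$-averaging built into $\widetilde H$. The starting point is the generalized Hardy--Stein identity of Gutowski \cite{g} and Gutowski--Kwa\'snicki \cite{gk}, together with the pointwise bound $F_p(a,b) \simeq (a-b)^2\max(|a|,|b|)^{p-2}$ valid for $p \ge 2$, where $F_p(a,b) := |a|^p - |b|^p - p\,b|b|^{p-2}(a-b)$ is the Bregman divergence of $u \mapsto |u|^p$. Decomposing $\max(|a|,|b|)^{p-2} = |a|^{p-2}\chi(a,b) + |b|^{p-2}\chi(b,a)$ and invoking the symmetry of $J(z,\dy)\,m(\dz)$ in $(y,z)$ (under which the two resulting terms coincide), and then rewriting $m(\dz) = \int P_t(x,\dz)\,m(\dx)$ via the $m$-symmetry of $P_t$, reshapes the identity into
\[
\|f\|_p^p \simeq \int m(\dx)\,\Ev^x\!\left[\int_0^\infty |P_tf(X_t)|^{p-2}\,\tilde g_t(X_t)^2\,\dt\right],
\]
with $\tilde g_t(z)^2 = \intEz (P_tf(y)-P_tf(z))^2\,\chi(P_tf(z),P_tf(y))\,J(z,\dy)$ the inner integrand of $\widetilde G$, so that $\widetilde H(x)^2 = \Ev^x\!\left[\int_0^\infty \tilde g_t(X_t)^2\,\dt\right]$.

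To close the estimate, I introduce the parabolic maximum $U^* := \sup_{t \ge 0}|P_tf(X_t)|$ and $A := \int_0^\infty \tilde g_t(X_t)^2\,\dt$, and apply H\"older's inequality with exponents $p/(p-2)$ and $p/2$---both inside the $\Ev^x$-expectation and across the $m$-integral---to obtain
\[
\|f\|_p^p \le C\bigl(\Ev^m[(U^*)^p]\bigr)^{(p-2)/p}\bigl(\Ev^m[A^{p/2}]\bigr)^{2/p}.
\]
The first factor is controlled by Doob's maximal inequality applied to the parabolic martingale $s \mapsto P_{T-s}f(X_s)$ together with the reversibility of $X$ under $m$, yielding $\Ev^m[(U^*)^p] \le C_p\|f\|_p^p$. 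The main obstacle will be the second factor: Jensen's inequality only gives $\|\widetilde H\|_p^p = \int (\Ev^x[A])^{p/2}\,\dm \le \Ev^m[A^{p/2}]$, which is the wrong direction. To bypass this I plan to use Lemma~\ref{lem:RevuzST} to identify $A$ as a positive continuous additive functional of the space--time process---the $\chi$-weighted analogue of the quadratic variation of the parabolic martingale---and then invoke a Burkholder--Davies--Gundy / good-$\lambda$ estimate tailored to pure-jump Hunt processes. The threshold $p \ge 3$ enters precisely at this point: the exponent $p - 2 \ge 1$ is needed for the H\"older step and the associated BDG maximal bound to allow the factor $\|f\|_p^{p-2}$ to be absorbed, yielding the desired inequality $\|f\|_p^2 \le C\|\widetilde H\|_p^2$.
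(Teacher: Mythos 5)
Your upper-bound half is exactly the paper's argument ($\Gampri[u]\le 2\Gamma[u]$ gives $\widetilde H\le\sqrt2\,H$, then apply the $H$-bound), and is fine. The lower-bound half, however, has a genuine gap at precisely the point you flag as "the main obstacle." After your H\"older step you need $\int_E \Ev_x\bigl[A^{p/2}\bigr]\,m(\dx)\lesssim \normLp{\widetilde H}^p=\int_E\bigl(\Ev_x A\bigr)^{p/2}\,m(\dx)$ with $A=\int_0^\infty\tilde g_t(X_t)^2\,\dts$; for $p\ge2$ Jensen gives the \emph{opposite} inequality, and the proposed rescue via the Revuz correspondence and Burkholder--Davies--Gundy cannot close it. BDG compares $\Ev_x\bigl[[M]_T^{p/2}\bigr]$ with $\Ev_x|M_T|^p$, never with $(\Ev_x[M]_T)^{p/2}$, and in any case $A$ is not the sharp bracket of the parabolic martingale: Corollary~\ref{cor:sharpMt} identifies $\langle M\rangle$ with $\int_0^{t\wedge T}2\Gamma[P_{T-s}f](X_s)\,\dss$ --- time-reversed and without the $\chi$-weight --- and passing from $\Gamma[P_{T-s}f](X_s)$ to $\Gamma[P_sf](X_s)$ is only legitimate after integrating against $m$ via \eqref{eq:duality}, which requires conservativeness (not assumed for this bound) and fails pointwise; this is exactly the "time-doubling" pitfall of Remark~\ref{rem:GlessH}. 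Your intermediate rewriting $m(\dz)=\int_E P_t(x,\dz)\,m(\dx)$ has the same problem: without conservativeness one only has $\int_E P_tg\,\dms\le\int_E g\,\dms$ for $g\ge0$, which points the wrong way. Finally, you misattribute the role of $p\ge3$: it is not needed for H\"older (any $p>2$ would do there).

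The paper's proof is much shorter and avoids all martingale input. Since $p\ge3$ makes $a\mapsto|a|^{p-2}$ convex, Jensen gives $|P_tf|^{p-2}\le P_t\bigl(|f|^{p-2}\bigr)$ a.e. Inserting this into the Hardy--Stein equivalence \eqref{sim:HS_2pinf} and using the self-adjointness of $P_t$ on $\LE{2}$ to move it onto $\Gampri[P_tf]$ yields
\begin{align*}
    \normLp{f}^p
    \asymp
    \int\limits_0^\inft\intE \Gampri[P_tf]\,|P_tf|^{p-2}\dms\dt
    \le
    \int\limits_0^\inft\intE P_t\Gampri[P_tf]\,|f|^{p-2}\dms\dt
    =
    \intE \widetilde H^{\,2}\,|f|^{p-2}\dms,
\end{align*}
and a single application of H\"older with exponents $p/2$ and $p/(p-2)$ gives $\normLp{f}^p\lesssim\normLp{\widetilde H}^2\,\normLp{f}^{p-2}$, hence the claim. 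If you want to salvage your probabilistic route, you would have to replace the pair (maximal function, BDG) by this deterministic Jensen--duality step; as written, the argument does not close.
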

At this point, the question of whether it is possible to obtain estimates for the remaining ranges of $1<p<\infty$ remains open. 
It is unclear which methods should be employed in this context.
Approaches relying on the Hardy--Stein identity seem particularly suitable for the functions $G$ and $\widetilde{G}$, whereas martingale techniques appear to be more effective for the function $H$.

Square functions were first introduced by Littlewood and Paley \cite{LittlewoodPaleyI}, although their origins can be traced back to the works of Kaczmarz \cite{Kaczmarz} and Zygmund~\cite{Zygmund1927}; see also Zygmund~\cite{Zygmund_Remarque_Kaczmarz}.
These early studies explored the use of square functions in analyzing the pointwise convergence of Fourier series.
Subsequently, Littlewood and Paley developed a series of results linking square functions to Fourier series across multiple papers \cite{LittlewoodPaleyI, LittlewoodPaleyII, LittlewoodPaleyIII, PaleyI, PaleyII}. Over time, further research expanded both on their work and on entirely new types of square functions. Notable contributions include Marcinkiewicz~\cite{Marcinkiewicz38}, who introduced the so-called Marcinkiewicz square function, and Marcinkiewicz and Zygmund~\cite{Marcinkiewicz_Zygmund_lusin}, who studied Lusin’s area integral. Zygmund \cite{Zygmund_powerI, Zygmund_powerII} investigated square functions in the context of power series, while Marcinkiewicz~\cite{Marcinkiewicz39} applied them to Fourier multipliers.
Further advancements were made by Zygmund in \cite{Zygmund44}, where he established the $L^p$-boundedness
of the Marcinkiewicz square function, and in \cite{Zygmund45}, where he provided a simplified proof of the results originally presented by Littlewood and Paley in \cite{LittlewoodPaleyII}.

Subsequent research conducted by Zygmund and his students paved the way for new developments in the theory of square functions. At this stage, the theory was extended to the multidimensional setting. For references, see Calderón~\cite{Calderon50}, Stein~\cite{Stein58}, and Benedek, Calderón, and Panzone~\cite{Benedek_Calderon_Panzone62}; see also Calderón~\cite{Calderon50b}.
Additionally, substantial progress was made in the analysis of the Marcinkiewicz square function, as discussed in Weiss and Zygmund~\cite{Weiss_Zygmund59} and Stein~\cite{Stein58}. Notably, this function found applications in the study of derivatives, including fractional ones, in the $L^p$ sense; see Stein and Zygmund~\cite{Stein_Zygmund64, Stein_Zygmund65}.
Furthermore, we would like to highlight the contributions of Calderón and Zygmund~\cite{Calderon_Zygmund52}, as well as Zygmund's own works~\cite{Zygmund56, Zygmund59}.

Later advancements in the study of square functions played a key role in the development of the theory of Hardy spaces. Notable contributions include Burkholder’s work~\cite{Burkholder66}, which extended Paley's theorem from Walsh-Paley series to general martingales, followed by further generalizations by Burkholder and Gundy~\cite{Burkholder_Gundy70} and Burkholder, Gundy, and Silverstein~\cite{Burkholder_Gundy_Silverstein71}.
Additionally, we highlight the works of Stein~\cite{Stein66a, Stein66b, Stein67}, Fefferman and Stein~\cite{Fefferman_Stein72}, and Gundy and Stein~\cite{Gundy_Stein79}, where Hardy spaces were, in particular, characterized using Brownian motion.

The classical Littlewood--Paley theory has found numerous other applications, including those in the works of Stein~\cite{Stein76}, Stein and Wainger~\cite{Stein_Wainger78}, Nagel, Stein, and Wainger~\cite{Nagel_Stein_Wainger78}, Weiss and Wainger~\cite[pp.~429--434]{Guido_HA2},
Fabes, Jerison, and Kenig~\cite{MR815765}, and
Jones and Kenig~\cite[pp.~24--90]{MR1013813}.
Many of these ideas have been incorporated into Stein’s books~\cite{stein-singular, stein-topics}. Notably, in~\cite{stein-topics}, square functions played a fundamental role in proving a maximal inequality for semigroups; see Lemma~\ref{lem:Stein_ineq} below.
This result is widely used throughout the present work.
For a more comprehensive overview of the historical development of classical Littlewood--Paley theory, as summarized in the previous paragraphs, we refer the reader to Stein's outstanding essay~\cite{Stein_essay}.

At this stage, it is worth mentioning that, given the wide range of applications of square functions, the literature contains a diverse selection of such functions -- some closely related, while others are more distantly connected.
Nevertheless, the scope of our article focuses on the square functions associated with stochastic processes.
The first proposal to connect processes with a broad class of square functions was introduced by Meyer in his series of articles \cite{MeyerIIb, MeyerI, MeyerII, MeyerIII, MeyerIV}. We refer also to later corrections \cite{MeyerIV_corr, Meyer_corr}.
In essence, this method is based on expressing a square function by the carr\'{e} du champ operator $\Gamma$ of the process. This operator was introduced by Meyer in \cite{MeyerIIb}.


Specifically, the carr\'{e} du champ operator associated with the Brownian motion is $\Gamma[u]=\abs{\nabla u}^2$. Therefore, the square functions investigated by Stein in \cite{stein-singular} can be understood as those associated with Brownian motion and, consequently, with the classical Laplace operator~$\Delta$.
This topic was further carried out by Meyer in \cite{Meyer84, Meyer85}.

An alternative probabilistic perspective on the Littlewood--Paley functions from \cite{stein-topics} was suggested by Varopoulos~\cite{Varopoulos80}. Furthermore, we draw attention to the work of Ba{\~n}uelos~\cite{Banuelos86}, which presented the Brownian counterpart of the Lusin area integral. See also the monograph by Ba{\~n}uelos and Moore~\cite{Banuelos_Moore99}. However, closely related square functions for Brownian motion were introduced somewhat earlier by Bennett in~\cite{Bennett85}. Later, Bouleau and Lamberton extended the previously mentioned probabilistic methods from Brownian motion to the $\alpha$-stable processes in \cite{Bouleau_Lamberton86}.
Further exploration of the Littlewood--Paley theory in the $\alpha$-stable case is provided in recent works, such as those by Kim and Kim~\cite{Kim_Kim2012} and Karl{\i}~\cite{Karki2013}.

The remainder of the paper is organized as follows.
In the next section, we introduce all the required notions and facts.
Firstly, we describe a pure\nobreakdash-jump Dirichlet form, associated semigroup, the symmetric Hunt process, and its carr\'{e} du champ operator.
In Subsection~\ref{sub:dev}, we establish some auxiliary facts concerning $L^p$-derivatives, which are useful for the connection between the carr\'{e} du champ operator and the quadratic variation of a martingale, as presented later in Section~\ref{sec:Sharp_bracket}.
Subsection~\ref{sub:dev} contains a formulation of the Hardy--Stein identity in the context of the pure\nobreakdash-jump case, along with the useful inequalities that follow from it.
In Subsection~\ref{sub:square_functions}, we formulate square functions utilizing the carr\'{e} du champ operator.
Subsection~\ref{sub:Revuz} is dedicated to the Revuz correspondence.
In Subsection~\ref{sub:space_time} we define the space\nobreakdash-time process derived from the initial symmetric Hunt process.
Subsection~\ref{sub:martingale} presents the basic facts concerning martingales, their quadratic variation, and states the Burkholder--Davies--Gundy inequality.

In Section~\ref{sec:Sharp_bracket}, we prove a formula for the quadratic variation in terms of carr\'{e} du champ operator.
Section~\ref{sec:square_fun} is dedicated to the Littlewood--Paley estimates of square functions, where we provide the proofs of $L^p$\nobreakdash-bounds and present some counterexamples.

\section{Preliminaries}
\label{sec:prelim}

In this paper, we work under the standard topological configuration for Dirichlet forms; see (1.1.7) in Fukushima, Oshima, and Takeda \cite{fot}.
More precisely, we consider a locally compact separable metric space $E$ and denote the $\sigma$\nobreakdash-algebra of all Borel sets in $E$ by $\BE$. Let $m$ be a Radon measure on $E$ of full support, that is, $\supp{m}=E$.

Let $1\le p\le \infty$. The real-valued Banach space $L^p(E,\BE,m)$, equipped with the norm $\normLp{\cdot}$, will be referred to as $\LpE$. Let $\C(E)$ be the class of continuous functions on $E$. By $\C_c(E)$ we denote the class of functions from $\C(E)$ with compact support.

We frequently use the notation
$f(x) \lesssim g(x)$ (resp. $f(x) \gtrsim g(x)$)
to indicate that there exists a positive constant $C_p$ depending only on $p$
such that $f(x) \le C_p g(x)$ (resp. $f(x) \ge C_p g(x)$)
for all relevant arguments $x$.
If both
$f(x) \lesssim g(x)$ and $f(x) \gtrsim g(x)$
hold,
we denote this by $f(x) \asymp g(x)$.
Additionally, when two sequences $(a_n)_{n\in\N}$ and $(b_n)_{n\in\N}$ of non\nobreakdash-negative real numbers are asymptotically equal, that is, $a_n / b_n \to 1$ as $n\to\inft$, we write $a_n \sim b_n$. Similarly, for non\nobreakdash-negative functions $f$ and $g$, we will write $f(x)\sim g(x)$ as $x\to x_0$ whenever $\lim_{x\to x_0} f(x) /g(x)=1$ and say that those functions are asymptotically equal as $x\to x_0$.

Consider a regular Dirichlet form $\EDE$ with $\DE\subseteq \LE{2}$. We assume that $\EDE$ is pure\nobreakdash-jump, that is, it consists only of the jumping part in the Beurling\nobreakdash--Deny decomposition; see Lemma 4.5.4 in \cite{fot}. In other words, the form $\E$ is given by the following formula:
\begin{align}
\label{eq:E-app}
    \E(u,v)
    =
    \frac{1}{2} \iintEEd (u(y) - u(x)) (v(y) - v(x)) \,J\dxdy
\end{align}
for all quasi-continuous $u,v\in\DE$.
Here, $J$ is the \emph{jumping measure} of $\EDE$
and
$\diag:=\{(x,y)\in E\times E: x=y\}$ is the diagonal of $E\times E$.
Further, we always assume that we choose the quasi\nobreakdash-continuous version of $u\in\DE$ without mentioning; see Theorem~2.1.7 in \cite{fot}.
We will also write $\E[u]:=\E(u,u)$.

It should be noted that, due to our convention, the measure $J$ considered here is a factor of two larger than the one in \cite{fot}. Compare \eqref{eq:E-app} with \cite[(4.5.13)]{fot}.
This choice is inspired by the probabilistic interpretation of the jumping measure $J$. We refer here to~\cite[(5.3.6)]{fot}. Moreover, it is consistent with the usage found in other works.

Throughout the paper $X=(\Omega, \F, \Ft, (\theta_t), (X_t), (\Prb_x))$ will denote a symmetric pure\nobreakdash-jump Hunt process associated with the Dirichlet form $\EDE$.
Then the semigroup $\Pt$ of $\Xt$ extends to a semigroup on $\LpE$ ($1 \le p \le \infty$) which is a semigroup of sub\nobreakdash-Markovian contractions on $\LpE$ ($1 \le p \le \infty$), strongly continuous on $\LpE$ ($1 \le p < \infty$), symmetric on $\LE{2}$, and satisfies
\begin{align}
\label{eq:E(t)E}
    \left\{\begin{aligned}
        \DE &= \left\{u\in\LE{2}: \text{finite } \lim_{t\to0^+} \frac{1}{t}\int_E (u-P_tu)u \dms \text{ exists}\right\},\\
        \E(u,v) &= \lim_{t\to0^+} \frac{1}{t}\int_E (u-P_tu)v \dms,\quad u,v\in\DE.\\
    \end{aligned}\right.
\end{align}
For the correspondence between symmetric Hunt processes and regular Dirichlet forms, we refer to Sections~4.2 and 7.2 in \cite{fot}. For the correspondence between regular Dirichlet forms and the semigroups on $\LE{2}$ we refer to Section~1.3 in \cite{fot}. For the $\LpE$-extension of $\Pt$ we refer to \cite[p.~38, p.~56]{fot} and Section~1 of Farkas, Jacob, and Schilling \cite{fjs}. See also Section~2.6 of the monograph by Jacob \cite[pp.~133--138]{jacob_vol_2}. The last two references assume the setting $E=\Rn$, but the same argument applies for general $E$.

For such a class of semigroups, the following maximal inequality of Stein is valid. We refer to \cite[p. 73]{stein-topics}.
\begin{lem}[Stein's maximal inequality]
\label{lem:Stein_ineq}
    Let $1 < p \le \infty$ and $f\in\LpE$. Denote
    $f^*(x):=\sup_{t\ge0}|P_tf(x)|$.
    Then, there exists a constant $D_p>0$ such that,
    \begin{align}
    \label{neq:Stein}
        \normLp{f^*}
        \le
        D_p \normLp{f}.
    \end{align}
\end{lem}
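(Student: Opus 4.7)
The plan is to adapt the classical argument of Stein from \cite{stein-topics}, reducing the semigroup maximal inequality to Doob's martingale maximal inequality via Rota's dilation theorem. The endpoint $p = \infty$ is immediate from sub\nobreakdash-Markovian contractivity of $(P_t)$ on $L^\infty$, which gives $\|f^*\|_\infty \le \|f\|_\infty$, so I would focus on the range $1 < p < \infty$.

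For $1 < p < \infty$, the first step is to invoke Rota's dilation theorem, applicable because $(P_t)$ is a symmetric sub\nobreakdash-Markovian semigroup on $L^2(E,m)$ (possibly after adjoining a cemetery point to render the semigroup Markovian). This produces an auxiliary probability space $(\widetilde{\Omega}, \widetilde{\mathcal{F}}, \widetilde{\mathbb{P}})$, an isometric embedding $\iota\colon L^p(E,m) \to L^p(\widetilde{\Omega})$, and a decreasing filtration $(\mathcal{G}_n)_{n \in \N}$ on $\widetilde{\Omega}$ such that $\iota(P_{2n}f) = \widetilde{\Ev}[\iota(f) \mid \mathcal{G}_n]$ for every $n \in \N$. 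Doob's maximal inequality for reverse martingales then yields
\[
\left\| \sup_{n \in \N} |P_{2n}f| \right\|_p \leq \frac{p}{p-1}\, \|f\|_p,
\]
controlling the maximal function along the dyadic sample of times.

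The remaining step, which I expect to be the main obstacle, is to pass from the discrete sample $\{P_{2n}f\}_{n \in \N}$ to the full continuous-time supremum $\sup_{t\ge 0}|P_t f|$. Writing $t = 2n + r$ with $r \in [0,2]$ and $P_tf = P_r(P_{2n}f)$, it would suffice to bound $\sup_{0 \le r \le 2}|P_r g|$ by a universal multiple of $\|g\|_p$. The standard route is to exploit the analyticity of $(P_t)$ on $L^p$ for $1 < p < \infty$ -- itself a consequence of symmetry and sub\nobreakdash-Markovianity -- writing $P_r g - g = \int_0^r (d/ds) P_s g \, ds$ and estimating the generator-valued integrand via a Littlewood--Paley $g$-function bound. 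Combining this short-time estimate with the discrete maximal inequality delivers $\|f^*\|_p \le D_p \|f\|_p$, with the constant $D_p$ depending only on $p$ and blowing up as $p \downarrow 1$ in the expected manner. The delicacy of this continuous-to-discrete bridge, rather than the dilation step itself, is what forces the reliance on the particular symmetric Markovian structure.
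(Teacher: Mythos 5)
The paper does not actually prove this lemma: it is quoted verbatim from Stein's book (the citation to \cite[p.~73]{stein-topics} is the entire ``proof''), and the remark following it points to the alternative Rota/Doob route on pp.~106--107 of the same book. Your overall strategy --- contractivity for $p=\infty$, Rota's dilation plus Doob's reverse-martingale maximal inequality for the dyadic skeleton, and an analyticity argument to fill in continuous time --- is exactly the classical argument being cited, so at the level of strategy you are aligned with the source the paper relies on.

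However, the continuous-to-discrete bridge as you describe it has a genuine gap, and it is precisely the step you flag as the main obstacle. First, even if you could prove $\bigl\lVert \sup_{0\le r\le 2}|P_r g|\bigr\rVert_p \le C\lVert g\rVert_p$ for every $g$, applying it to $g=P_{2n}f$ only bounds each $\bigl\lVert\sup_{0\le r\le 2}|P_{2n+r}f|\bigr\rVert_p$ separately; you cannot interchange $\sup_{n}$ with the $L^p$-norm, so this does not control $\lVert f^*\rVert_p$. Moreover, that local-in-time bound is itself a maximal inequality of the same nature as the target, so the reduction is close to circular. Second, the proposed estimate of $\sup_{0\le r\le 2}|P_rg-g|$ by $\int_0^2 \bigl|\tfrac{d}{ds}P_sg\bigr|\,ds$ paired against a Littlewood--Paley $g$-function via Cauchy--Schwarz produces the weight $\bigl(\int_0^2 s^{-1}\,ds\bigr)^{1/2}=\infty$. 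The two bridges that actually work are: (a) Stein's original one, comparing $P_tf$ with the ergodic averages $\tfrac1t\int_0^t P_sf\,ds$ (controlled by the Hopf--Dunford--Schwartz maximal ergodic theorem) and bounding the difference pointwise by $c\,g_1(f)(x)$ with $g_1(f)=\bigl(\int_0^\infty u|\partial_uP_uf|^2\,du\bigr)^{1/2}$ --- here the weight $u$ appears with the correct sign and the $L^p$-boundedness of $g_1$ is the real work, done by interpolation; or (b) running Rota at every time scale $t_0=2^{-k}$, noting the Doob constant is uniform in $k$, and passing from the dyadic rationals to all $t>0$ using a.e.\ continuity of $t\mapsto P_tf(x)$ on $(0,\infty)$ (itself obtained from analyticity), plus monotone convergence in $k$. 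Either of these replaces your step; as written, your bridge would fail.
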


\begin{rem}
    The above maximal inequality may also be derived from Doob's maximal inequality for martingales.
    For this, see the discussion in \cite[pp.~106--107]{stein-topics}, where Rota's theorem was used.
    For this reason, in the literature, the above constant $D_p$ is commonly taken to be equal to $\tfrac{p}{p-1}$ if $1<p<\infty$ and $1$ if $p=\infty$.
\end{rem}

Let $A_p$ be the \emph{infinitesimal generator} of the semigroup $\Pt$ on $\LpE$:
\begin{align}
\label{eq:Lpdef}
    A_p u &:= \lim_{t\to0^+} \frac{1}{t}(P_tu - u)
    \quad
    \text{in }
    \LpE,
\end{align}
with the natural domain
\begin{align*}
    \DAp := \{u\in\LpE: \lim_{t\to0^+} \tfrac{1}{t}(P_tu - u) \text{ exists in }\LpE\}.
\end{align*}

Denote the lifetime of the process $\Xt$ by $\zeta$.
Let $\Cem$ be the cemetery state of $\Xt$. By $E_\Cem$ we denote the one\nobreakdash-point compactification of $E$.
When $E$ is already compact, then $\Cem$ is an isolated point of $E_\Cem$.
Since $\Xt$ is symmetric, it possesses the \emph{self-dual} property with respect to the reference measure $m$. This implies that, for any $T>0$ and a non\nobreakdash-negative measurable function $\Phi$ defined on the space of trajectories,
\begin{align}
\label{eq:self-dual}
    \intE \Ev_x \Phi((X_{(T-t)-} : t \in [0, T]))\ind_{\{T<\zeta\}}\,m(\dx) & = \intE \Ev_x \Phi((X_t : t \in [0, T]))\ind_{\{T<\zeta\}}\,m(\dx) .
\end{align}
In particular, for a non\nobreakdash-negative Borel function $\ph\colon [0,\infty)\tim E\to\R$, we may write
\begin{align}
\label{eq:dualityZeta}
    \intE \Ev_x \left(\int\limits_0^T \ph(T - t, X_t)\dts
    \right)
    \ind_{\{T<\zeta\}}
    \,m(\dx) 
    &=
    \intE \Ev_x \left(\int\limits_0^T \ph(t, X_{T - t})\dts\right)
    \ind_{\{T<\zeta\}}
    \,m(\dx)
    \notag \\
    & = \intE \Ev_x \left(\int\limits_0^T \ph(t, X_t)\dts\right)
    \ind_{\{T<\zeta\}}
    \,m(\dx).
\end{align}
For some of the results we will need the following assumption.
\begin{ass}[Conservativeness]
\label{ass:coserv}
    $
        P_t1=1
    $ a.e.
    for every $t>0$.
\end{ass}
If we impose the above assumption, then the identity \eqref{eq:dualityZeta} takes the following form
\begin{align}
\label{eq:duality}
    \intE \Ev_x \left(\int\limits_0^T \ph(T - t, X_t)\dts
    \right)
    \,m(\dx) 
    = \intE \Ev_x \left(\int\limits_0^T \ph(t, X_t)\dts\right)
    \,m(\dx).
\end{align}

We assume additionally that the jumping measure $J$ is given by the kernel $J(x,\dx)$, i.e.,
\begin{align}
\label{eq:JK}
    J\dxdy = J(x,\dy)m(\dx).
\end{align}
This assumption is not too restrictive. Indeed, for a general jumping measure $J$ we may always write
$J\dxdy = N(x,\dy)\nu(\dx)$,
where $(N,H)$ is the L\'evy system of $\Xt$ and $\nu$ is the Revuz measure of $H$; see Theorem~5.3.1 in \cite{fot}. The notion of the Revuz measure is described in Subsection~\ref{sub:Revuz}.

We want to stress that, due to our convention, $J$ is equal to $N(x,\dy)\nu(\dx)$, hence it is two times greater than the one appearing in \cite{fot}.
The motivation comes from the probabilistic meaning of the jumping measure (see (5.3.6) in~\cite{fot}); this convention also aligns with those adopted by several other authors.

Under assumption \eqref{eq:JK}, the regular Dirichlet form $\EDE$ possesses a \emph{carr\'{e} du champ operator} given by:
\begin{align}
\label{eq:carre}
    \Gamma[u](x) := \frac{1}{2} \intEx (u(y) - u(x))^2 \,J(x,\dy).
\end{align}
Since the integrand is non\nobreakdash-negative, the above operator is well\nobreakdash-defined for every measurable function $u$, possibly infinite.
In particular, $\Gamma[u]$ is finite a.e. for $u\in\DE$.

It is easy to see that for any $u\in\DE\cap\LE{\infty}$, the function $\Gamma[u]$ is the unique element of $\LE{1}$ such that
\begin{align*}
    \intE f \Gamma[u] \dms = \E(uf,u) - \tfrac{1}{2}\E(f,u^2),
    \quad
    f \in \DE\cap\C_c(E).
\end{align*}
Therefore, in light of our convention, the function $\Gamma[u]$ is two times smaller than the one in \cite{fot} and \cite{bh_book}. To see this, compare the above formula with \cite[(3.2.14)]{fot} and \cite[Proposition~4.1.3]{bh_book}.

It is known that the above operator is continuous as a mapping $\Gamma\colon \DE\to\LE{1}$, where the space $\DE$ is equipped with the seminorm $\sqrt{\E[\cdot]}$, that is,
\begin{align}
\label{neq:carreCon}
    \normL{\Gamma[u]}{1} \le \E[u]
    , \quad u\in\DE;
\end{align}
see Proposition~4.1.3 in Bouleau and Hirsch \cite{bh_book}.
Additionally, when $u\in\D(A_2)$, then $u^2 \in \D(A_1)$ and
\begin{align}
\label{eq:gamma_DA2}
    \Gamma[u] = \tfrac{1}{2} A_1(u^2) - u A_2u;
\end{align}
see Theorem~4.2.2 in \cite{bh_book}.

We consider also a modified version of the operator $\Gamma$:
\begin{align}
\label{eq:Gampri}
    \Gampri[u](x) := \intEx (u(y) - u(x))^2 \,\chi(u(x),u(y)) \,J(x,\dy),
\end{align}
where
\begin{align}
\label{eq:chi}
    \chi(s, t) & :=
    \begin{cases}
        1 & \text{if } |s| > |t| , \\
        \tfrac{1}{2} & \text{if } |s| = |t| , \\
        0 & \text{if } |s| < |t| .
    \end{cases}
\end{align}
Note that $\chi$ is the characteristic function of the set $\{(s,t): |s| > |t|\}$, with a minor modification when $|s| = |t|$.
Since $\chi(s, t) + \chi(t, s) = 1$, from the symmetry of the jumping measure $J$ we have
\begin{align}
\label{eq:Gamma+-}
    \intE \Gamma[u] \dms = \intE \Gampri[u] \dms = \E[u],
    \quad
    u\in\DE.
\end{align}

\subsection{Derivatives on $L^p$}
\label{sub:dev}

Let $1\le p<\infty$ and $I\subseteq\R$ be some interval. Given a mapping
$I \ni t \mapsto u(t) \in \LpE$ we denote
\begin{align*}
    \Delta_hu(t) := u(t+h)-u(t) \quad \text{if }t,t+h\in I.
\end{align*}

We call $u$ \emph{continuous} on $I$ with values in $\LpE$ if $\Delta_hu(t) \to 0$ in $\LpE$ as $h\to 0$ for every $t\in I$.
A function $u$ is called \emph{differentiable} on $I$ with values in $\LpE$ if the limit $u'(t) := \lim_{h \to 0} \frac{1}{h} \Delta_hu(t)$ exists in $\LpE$ for every $t\in I$.
We say that $u$ is \emph{continuously differentiable} (or shortly $\C^1$) on $I$ with values in $\LpE$ if $u$ is differentiable and the mapping $I \ni t \mapsto u'(t) \in \LpE$ is continuous.

The idea of employing $L^p$-derivatives was motivated by the works \cite{bjlp, bgp}.
In \cite{bjlp}, $L^p$-derivatives were applied to the study of contractivity of perturbed semigroups; see the proof of Theorem~3 therein.
In \cite{bgp}, the authors used $L^p$-derivatives to prove the polarized Hardy--Stein identity. For multidimensional $L^p$\nobreakdash-calculus, we refer to \cite[Appendix~B]{bgp}.

In the following part of this subsection, we establish some auxiliary facts concerning $L^p$-derivatives. They will play a role in the proof of the main result of Section~\ref{sec:Sharp_bracket}.

\begin{lem}
\label{lem:dif_Ptut}
    Let $1<p<\infty$. Let $u$ be differentiable on $I\subseteq[0,\inft)$ with values in $\LpE$. Then $P_tu(t)$ is differentiable on $I\setms\{0\}$ and
    \begin{align}
    \label{eq:dif_Ptut_for_p}
        (P_tu(t))' = A_p P_tu(t) + P_t u'(t),
        \quad
        t\in I\setms\{0\}.
    \end{align}
   Additionally, under the assumption that for each $t\in I$, $P_tu(t)\in\D(A_1)$, then $P_tu(t)$ is differentiable on $I$ with values in $\LE{1}$ and
    \begin{align}
    \label{eq:dif_Ptut}
        (P_tu(t))' = A_1 P_tu(t) + P_t u'(t),
        \quad
        t\in I.
    \end{align}
\end{lem}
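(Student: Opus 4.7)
I would prove both statements by decomposing the difference quotient:
\begin{align*}
    \frac{P_{t+h}u(t+h) - P_t u(t)}{h}
    =
    P_{t+h}\,\frac{\Delta_h u(t)}{h}
    + \frac{P_{t+h}u(t) - P_t u(t)}{h},
\end{align*}
and examining each term separately as $h\to 0$.

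For the first term, I would further split
\begin{align*}
    P_{t+h}\frac{\Delta_h u(t)}{h} - P_t u'(t)
    = P_{t+h}\Bigl(\frac{\Delta_h u(t)}{h} - u'(t)\Bigr) + (P_{t+h} - P_t)u'(t).
\end{align*}
The first summand tends to $0$ in $\LpE$ by the $\LpE$\nobreakdash-contractivity of $P_{t+h}$ together with the $\LpE$\nobreakdash-differentiability of $u$ at $t$; the second tends to $0$ by the strong continuity of $\Pt$ on $\LpE$ for $1\le p<\infty$.

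For the second term, the semigroup property gives, for $h>0$ small,
\begin{align*}
    \frac{P_{t+h}u(t) - P_t u(t)}{h} = \frac{P_h - \Id}{h}\,P_t u(t),
\end{align*}
so convergence in $\LpE$ to $A_p P_t u(t)$ is equivalent to $P_t u(t) \in \DAp$. For $1<p<\infty$ and $t>0$ this holds automatically: the symmetric sub\nobreakdash-Markovian semigroup $\Pt$ is analytic on $\LpE$ by Stein's theorem, hence $P_t f \in \DAp$ for every $f\in\LpE$. The case $h<0$ is treated analogously by writing the quotient as $\tfrac{P_{|h|}-\Id}{|h|}P_{t-|h|}u(t)$ and invoking the continuity of $s\mapsto A_p P_s u(t)$ on $(0,\infty)$. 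Combining the two terms yields \eqref{eq:dif_Ptut_for_p}.

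For the $L^1$\nobreakdash-statement I would replay the argument with $\LE{1}$ in place of $\LpE$: the $\LE{1}$\nobreakdash-contractivity and strong continuity of $\Pt$ are still in force, and the analyticity input needed to control the second term is replaced by the explicit hypothesis $P_t u(t)\in\D(A_1)$, which covers every $t\in I$, including $t=0$. The main difficulty, and the decisive ingredient, is Stein's analyticity theorem for symmetric sub\nobreakdash-Markovian semigroups, which secures $P_t u(t)\in \DAp$ for $t>0$ in the first part; once this is granted, the remainder is routine bookkeeping with contractions and strong continuity of the semigroup.
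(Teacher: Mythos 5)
Your proposal is correct and follows essentially the same route as the paper: the same decomposition of the difference quotient into the term handled by contractivity, the term handled by strong continuity, and the generator term, with Stein's analyticity theorem supplying $P_tu(t)\in\DAp$ for $t>0$ when $1<p<\infty$ and the explicit hypothesis $P_tu(t)\in\D(A_1)$ doing that job for $p=1$. Your extra care with the left-sided limit $h<0$ is a minor refinement of the paper's argument, not a different approach.
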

\begin{proof}
    Let $1\le p<\infty$. Observe that
    \begin{align*}
        \frac{1}{h} \Delta_h (P_tu(t))
        =
        P_{t+h} u'(t) + P_{t+h}\left(\frac{1}{h} \Delta_hu(t) - u'(t)\right)
        +
        \frac{1}{h} (P_{t+h}-P_t)u(t).
    \end{align*}
    Due to the strong continuity of $\Pt$, the first term converges to $P_tu'(t)$ as $h\to0$.
    The second term converges to zero, because
    \begin{align*}
        \BnormLp{P_{t+h}\left(\frac{1}{h} \Delta_hu(t) - u'(t)\right)}
        \le
        \BnormLp{\frac{1}{h} \Delta_hu(t) - u'(t)} \to 0,
        \quad
        \text{as }h\to0.
    \end{align*}
    The above inequality follows from the contraction property of $\Pt$.

    The last part converges to $A_p P_tu(t)$ by the definition of the generator $A_p$.
    Indeed, when $p>1$, the semigroup $\Pt$ is analytic on $\LpE$; see \cite[Theorem~1 in Chapter~II]{stein-topics}. Hence, for any $f\in\LpE$ and $t>0$, $P_tf\in\DAp$. In particular, $P_tu(t)\in\DAp$, when $t\neq0$.
    In the case of $p=1$, the convergence follows from the assumption $P_tu(t)\in\D(A_1)$.
    In summary of the above observations,
    \begin{align*}
        \frac{1}{h} \Delta_h (P_tu(t)) \to P_t u'(t) + 0 + A_p P_tu(t) 
    \end{align*}
    in $\LpE$ as $h\to0$.
\end{proof}
\begin{rem}
    For $1<p<\infty$, if we assume that $u(0)\in\DAp$, then $P_tu(t)$ is differentiable on $I$ and \eqref{eq:dif_Ptut_for_p} holds for all $t\in I$.
\end{rem}

\begin{lem}
\label{lem:difPtPTmtf2}
    Let $f\in\LE{2}$ and let $T>0$. The mapping $[0,T)\ni t \mapsto P_t[(P_{T-t}f)^2]$ is $\C^1$ with values in $\LE{1}$ and
    \begin{align*}
        (P_t[(P_{T-t}f)^2])' = 2P_t(\Gamma[P_{T-t}f]).
    \end{align*}
\end{lem}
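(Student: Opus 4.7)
The plan is to set $v(t) := P_{T-t}f$ and $u(t) := v(t)^2$, and then apply Lemma~\ref{lem:dif_Ptut} with $p=1$. Because $f \in \LE{2}$ and $t \mapsto T-t > 0$ on $[0,T)$, analyticity of $\Pt$ on $\LE{2}$ (used already in the proof of Lemma~\ref{lem:dif_Ptut}) gives $v(t) \in \D(A_2)$, and $v$ is $\C^1$ on $[0,T)$ with values in $\LE{2}$ with derivative $v'(t) = -A_2 v(t)$. Then, since \eqref{eq:gamma_DA2} applies to each $v(t) \in \D(A_2)$, we obtain
\begin{align*}
    u(t) = v(t)^2 \in \D(A_1),
    \qquad
    A_1 u(t) = 2\Gamma[v(t)] + 2 v(t) A_2 v(t),
\end{align*}
and since $\D(A_1)$ is invariant under $\Pt$, we also get $P_t u(t) \in \D(A_1)$ for every $t \in [0,T)$, which is the condition needed to apply the second part of Lemma~\ref{lem:dif_Ptut}.

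Next, I would verify that $u$ is differentiable on $[0,T)$ with values in $\LE{1}$, with
\begin{align*}
    u'(t) = 2 v(t) v'(t) = -2 v(t) A_2 v(t).
\end{align*}
This is a product-rule computation: writing
\begin{align*}
    \tfrac{1}{h}\Delta_h u(t) - 2 v(t)v'(t)
    =
    \left(\tfrac{1}{h}\Delta_h v(t) - v'(t)\right)\!\bigl(v(t+h)+v(t)\bigr)
    + v'(t)\bigl(v(t+h)-v(t)\bigr),
\end{align*}
the Cauchy--Schwarz inequality in $\LE{1} \subset \LE{2}\cdot\LE{2}$ and the continuity of $v$ and $v'$ in $\LE{2}$ yield convergence of both summands to $0$ in $\LE{1}$ as $h\to 0$.

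Having the ingredients of Lemma~\ref{lem:dif_Ptut}, I would invoke it with $p=1$ and commute $P_t$ with $A_1$ on $\D(A_1)$:
\begin{align*}
    (P_tu(t))'
    = A_1 P_t u(t) + P_t u'(t)
    = P_t\bigl(A_1 u(t) + u'(t)\bigr)
    = P_t\bigl(2\Gamma[v(t)] + 2 v(t)A_2v(t) - 2v(t)A_2v(t)\bigr)
    = 2 P_t \Gamma[P_{T-t}f],
\end{align*}
which is the desired identity. Finally, for the $\C^1$ conclusion, I would show continuity of $t \mapsto P_t\Gamma[P_{T-t}f]$ in $\LE{1}$ by combining the bound
\begin{align*}
    \normL{\Gamma[v(t)]-\Gamma[v(s)]}{1} \le \sqrt{\E[v(t)-v(s)]}\sqrt{\E[v(t)+v(s)]},
\end{align*}
which follows from the polarization of \eqref{neq:carreCon} via Cauchy--Schwarz, with the $\E$-continuity of $t\mapsto P_{T-t}f$ on $[0,T)$ (a standard consequence of analyticity, since $v'(t) = -A_2 v(t) \in \LE{2}$ and $v(t) \in \D(A_2) \subset \DE$), and the strong continuity of $\Pt$ on $\LE{1}$.

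The main technical obstacle is the differentiability in step three: the squaring operation is only formally a ``product rule,'' and one must be careful that Cauchy--Schwarz genuinely pins down the $\LE{1}$-derivative from the $\LE{2}$-derivative of $v$. Once this is in place, the rest is an application of \eqref{eq:gamma_DA2}, the invariance of $\D(A_1)$ under $\Pt$, and Lemma~\ref{lem:dif_Ptut}.
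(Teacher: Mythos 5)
Your proof is correct and follows essentially the same route as the paper: reduce to Lemma~\ref{lem:dif_Ptut} with $p=1$, use analyticity on $\LE{2}$ to get $P_{T-t}f\in\D(A_2)$ and hence $(P_{T-t}f)^2\in\D(A_1)$, identify $u'(t)=-2P_{T-t}f\,A_2P_{T-t}f$, cancel via \eqref{eq:gamma_DA2}, and establish continuity of the derivative from \eqref{neq:carreCon}. The only cosmetic difference is that you prove the $\LE{1}$ product rule directly by Cauchy--Schwarz, where the paper cites \cite[Lemma~B.3(i)]{bgp}.
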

\begin{proof}
    Denote $u(t):=(P_{T-t}f)^2$. Due to \eqref{eq:dif_Ptut} from Lemma~\ref{lem:dif_Ptut}
    \begin{align*}
        (P_t[(P_{T-t}f)^2])' = A_1  P_t[(P_{T-t}f)^2] + P_t u'(t).
    \end{align*}
    Let us justify why the use of this theorem is valid.
    For every $t\in[0,T)$, $P_{T-t}f\in\D(A_2)$ due to the fact that the semigroup $\Pt$ is analytic on $\LE{2}$. Therefore, $(P_{T-t}f)^2\in\D(A_1)$ and also $P_t[(P_{T-t}f)^2]\in\D(A_1)$; see \cite[Theorem~4.2.2]{bh_book}.

    According to \cite[Lemma~B.3(i)]{bgp},
    $u'(t) = 2P_{T-t}f(P_{T-t}f)' = -2P_{T-t}f A_2P_{T-t}f$.

    Bringing everything together,
    \begin{align*}
        (P_t[(P_{T-t}f)^2])'
        &=
        A_1  P_t[(P_{T-t}f)^2]
        -
        2 P_t [P_{T-t}f A_2P_{T-t}f]
        \\
        &=
        P_t [A_1[(P_{T-t}f)^2] - 2 P_{T-t}f A_2P_{T-t}f]
        \\
        &=
        2P_t(\Gamma[P_{T-t}f]).
    \end{align*}
    The last line follows from \eqref{eq:gamma_DA2}.

    In the end, we show that the above derivative is continuous with values in $\LE{1}$. Fix $t_0\in[0,T)$. We may write
    \begin{align*}
        \normL{P_t(\Gamma[P_{T-t}f]) - P_{t_0}(\Gamma[P_{T-t_0}f])}{1}
        &\leq
        \normL{P_t(\Gamma[P_{T-t}f]) - P_t(\Gamma[P_{T-t_0}f])}{1}
        \\
        &\quad+
        \normL{P_t(\Gamma[P_{T-t_0}f]) - P_{t_0}(\Gamma[P_{T-t_0}f])}{1}
    \end{align*}
    The second term converges to zero as $t\to t_0$ by the strong continuity of $\Pt$.
    
    Since the semigroup $\Pt$ is a contraction on $\LE{1}$, the first term can be estimated as follows,
    \begin{align*}
        \normL{P_t(\Gamma[P_{T-t}f]) - P_t(\Gamma[P_{T-t_0}f])}{1}
        &\leq
        \normL{\Gamma[P_{T-t}f] - \Gamma[P_{T-t_0}f]}{1}
        \\
        &\leq
        \left(\sqrt{\E[P_{T-t}f]} + \sqrt{\E[P_{T-t_0}f]}\right) \sqrt{\E[P_{T-t}f - P_{T-t_0}f]}
    \end{align*}
    The last inequality follows from the continuity of $\Gamma$; see \eqref{neq:carreCon}.
    As the semigroup $\Pt$ is strongly continuous on $\DE$ equipped with the seminorm $\sqrt{\E[\cdot]}$ (see Lemma~1.3.3(iii) in \cite{fot}), the right-hand side tends to zero as $t\to t_0$.
\end{proof}

\subsection{Hardy--Stein identity}

Following the approach from Ba{\~n}uelos, Bogdan, and Luks \cite{bbl}, we use the Hardy\nobreakdash--Stein identity to obtain some of the Littlewood\nobreakdash--Paley estimates. In the setting presented above, this identity is given by the following formula:
\begin{align}
\label{eq:HS-app}
    \intE |f(x)|^p \,m(\dx)
    -
    \lim_{T\to\inft} \normLp{P_Tf}^p
    &=
    \int\limits_0^\inft \iintqEEd
    F_p (P_tf(x),P_tf(y)) \,J\dxdy\dt
    .
\end{align}
Here, $f\in\LpE$, where $1<p<\infty$,
\begin{align}
\label{eq:Fp}
    F_p(a,b)
    :=
    |b|^p-|a|^p - pa\pow{p-1}(b-a)
\end{align}
is the \emph{Bregman divergence}, and $a\pow{\gamma}:=\abs{a}^\gamma \sgn a$ is the \emph{French power}. Note that $F_p$ is the second\nobreakdash-order Taylor remainder of the convex function $\R\ni a\mapsto |a|^p\in\R$. In particular, $F_p$ is non\nobreakdash-negative.
The Hardy\nobreakdash--Stein identity holds for every symmetric Hunt process (not only pure\nobreakdash-jump). For this, we refer to Corollary~1.2 in Gutowski and Kwaśnicki \cite{gk} and to Gutowski \cite{g}.

For the Bregman divergence $F_p$ the following estimate holds
\begin{align}
\label{sim:Fp_sim_Gp}
    F_p(a,b) \asymp \abs{b - a}^2 (\abs{a}\vee \abs{b})^{p-2}
    \asymp
    \abs{b - a}^2 (\abs{a} + \abs{b})^{p-2},
    \quad
    a,b\in\R.
\end{align}
Hence, if $p\ge2$, then also
\begin{align}
\label{sim:Fp_sim_Gp_p2}
    F_p(a,b)
    \asymp
    \abs{b - a}^2 (\abs{a}^{p-2} + \abs{b}^{p-2}),
    \quad
    a,b\in\R.
\end{align}
For $1<p<2$ we only have
\begin{align}
\label{sim:Fp_sim_Gp_p3}
    F_p(a,b)
    \lesssim
    \abs{b - a}^2 (\abs{a}^{p-2} + \abs{b}^{p-2}),
    \quad
    a,b\in\R.
\end{align}
The first estimate in \eqref{sim:Fp_sim_Gp} (in its vectorized form) was derived by Pinchover, Tertikas, and Tintarev in \cite{pyt}; see~(2.19).
However, earlier one-sided bounds may be found in Shafrir \cite[Lemma 7.4]{Shafrir} and, for $2\le p < \infty$, in Barbatis, Filippas, and Tertikas \cite[Lemma~3.1]{bft}.
The one\nobreakdash-dimensional case was presented also by Bogdan, Dyda, and Luks in \cite{bdl}; see Lemma~6. We also refer to Bogdan, Grzywny, Pietruska-Pa{\l}uba, and Rutkowski \cite[Lemma~2.3]{bgpr}.
For optimal constants in specific ranges of $p$, we refer to \cite[Lemma 7.4]{Shafrir} and Bogdan, Jakubowski, Lenczewska, and Pietruska\nobreakdash-Pałuba \cite{bjlp}.

To associate the right-hand side of the Hardy--Stein identity \eqref{eq:HS-app} with the carr\'{e} du champ operator $\Gamma$, we use estimate \eqref{sim:Fp_sim_Gp_p2}. Precisely, if $p\ge 2$, then we can write
\begin{align}
    \iintqEEd
    &F_p (P_tf(x),P_tf(y)) \,J\dxdy
    \notag \\
    & \asymp
    \iintqEEd (P_tf(y) - P_tf(x))^2 (|P_tf(x)|^{p - 2} + |P_tf(y)|^{p - 2}) \,J\dxdy
    \notag \\
    & =
    2 \iintqEEd (P_tf(y) - P_tf(x))^2 |P_tf(x)|^{p - 2} \,J\dxdy
    \notag \\ \label{eq:iiFp_p2}
    & =
    \intE \Gamma[P_tf](x) |P_tf(x)|^{p - 2} \,m(\dx).
\end{align}
For $1< p < 2$, we only have $\lesssim$-bound in the above estimate, as given in \eqref{sim:Fp_sim_Gp_p3}.

Similarly, we treat the operator $\Gampri$. For any $1<p<\infty$, by estimate \eqref{sim:Fp_sim_Gp},
\begin{align}
    \iintqEEd
    &F_p (P_tf(x),P_tf(y)) \,J\dxdy
    \notag \\
    & \asymp
    2 \iintqEEd (P_tf(y) - P_tf(x))^2 (|P_tf(x)|\vee|P_tf(y)|)^{p - 2} \,J\dxdy
    \notag \\
    & =
    \iintqEEd (P_tf(y) - P_tf(x))^2 |P_tf(x)|^{p - 2} \chi(P_tf(x), P_tf(y)) \,J\dxdy
    \notag \\ \label{eq:iiFp_p1}
    & =
    \intE \Gampri[P_tf](x) |P_tf(x)|^{p - 2} \,m(\dx).
\end{align}

Combining the above observations with the Hardy--Stein identity, we obtain the following estimate between the $p$-norm of the function $f$ and the carr\'{e} du champ operator $\Gamma$ and its modification $\Gampri$.
When $1< p \le 2$, then we get
\begin{align}
\label{neq:HS_12_bezSS}
    \intE |f(x)|^p\,m(\dx)
    & \gtrsim
    \int\limits_0^\inft \intE \Gampri[P_tf](x) |P_tf(x)|^{p - 2} \,m(\dx)\dt
    ,
\end{align}
while, if $2\le p < \infty$, then
\begin{align*}
    \intE |f(x)|^p\,m(\dx)
    & \gtrsim
    \int\limits_0^\inft \intE \Gampri[P_tf](x) |P_tf(x)|^{p - 2} \,m(\dx)\dt
    \\
    & \asymp
    \int\limits_0^\inft \intE \Gamma[P_tf](x)  |P_tf(x)|^{p - 2} \,m(\dx)\dt
    .
\end{align*}

For some of Littlewood\nobreakdash--Paley estimates, we need the following not overly restrictive assumption.
\begin{ass}[Strong stability]
\label{ass:SS}
    $
        \lim_{T\to\inft} \normLp{P_Tf} = 0
    $
    for every $f\in\LpE$.
\end{ass}
When the semigroup $\Pt$ is conservative (see Assumption~\ref{ass:coserv}) and the jumping measure satisfies some irreducibility assumption, then the limit $\lim_{T\to\inft} \normLp{P_Tf}$ is equal to $\normLp{\Bar{f}}$, where $\Bar{f}$ is the mean value of $f$ when $m(E)<\inft$ and $\Bar{f}=0$ when $m(E)=\inft$. For more details, see the discussion in Appendix A of \cite{g}.

Without Assumption~\ref{ass:SS}, most of the lower Littlewood\nobreakdash--Paley estimates are not valid. Indeed, if there exists some non\nobreakdash-zero $g\in\LpE$ such that $\lim_{T\to\inft} \normLp{P_Tg} > 0$, then, for $f:=\lim_{T\to\inft} P_Tg$, we have $P_tf=f$ and $\normLp{f}>0$. It is known that the inner integral of the right\nobreakdash-hand side of the Hardy\nobreakdash--Stein identity \eqref{eq:HS-app} equals
\begin{align*}
    \iintqEEd
    F_p (P_tf(x),P_tf(y)) \,J\dxdy
    =
    \lim_{h\to0^+} \frac{p}{h}\intE (P_tf-P_hP_tf)(P_tf)\pow{p-1} \dms.
\end{align*}
See the notion of the Sobolev\nobreakdash--Bregman form in \cite{gk}.
Since $P_tf=P_hP_tf=f$, the right\nobreakdash-hand side is equal to zero. Hence, by \eqref{eq:iiFp_p1},
\begin{align*}
    \intE \Gampri[P_tf](x) |P_tf(x)|^{p - 2} \,m(\dx) = 0.
\end{align*}
It is easy to see that this implies $\Gampri[P_tf]=0$ a.e.

Imposing Assumption~\ref{ass:SS}, we can strengthen estimates between $\normLp{f}$ and $\Gamma$ and $\Gampri$ as follows.
Then, if $1< p \le 2$, then
\begin{align*}
    \intE |f(x)|^p\,m(\dx)
    & \asymp
    \int\limits_0^\inft \intE \Gampri[P_tf](x) |P_tf(x)|^{p - 2} \,m(\dx)\dt
    \\
    & \lesssim
    \int\limits_0^\inft \intE \Gamma[P_tf](x) |P_tf(x)|^{p - 2} \,m(\dx)\dt
    ,
\end{align*}
while, if $2\le p < \infty$, then
\begin{align}
\label{sim:HS_2pinf}
    \intE |f(x)|^p\,m(\dx)
    & \asymp
    \int\limits_0^\inft \intE \Gampri[P_tf](x) |P_tf(x)|^{p - 2} \,m(\dx)\dt
    \\ \notag
    & \asymp
    \int\limits_0^\inft \intE \Gamma[P_tf](x)  |P_tf(x)|^{p - 2} \,m(\dx)\dt
    .
\end{align}

\subsection{Square functions}
\label{sub:square_functions}

Fix $f\in\LpE$, where $1<p<\infty$. The main scope of this work is to study the \emph{square functions} (or \emph{Littlewood--Paley functions}) introduced in Section~\ref{sec:introduction}; see equations \eqref{eq:G_fun}, \eqref{eq:Gprim_fun}, \eqref{eq:H_fun}, and \eqref{eq:Hprim_fun}. Utilizing notions of operators $\Gamma$ and $\Gampri$ defined by \eqref{eq:carre} and \eqref{eq:Gampri} we can rewrite earlier formulas of square functions in the following way:
\begin{align*}
    G(x) & := \left(
        \int\limits_0^\inft \Gamma [P_tf](x) \dts
    \right)^{1/2},
    \\
    \widetilde{G}(x) & := 
    \left(
        \int\limits_0^\inft \Gampri[P_tf](x) \dts 
    \right)^{1/2},
    \\
    H(x) & :=
    \left(
        \int\limits_0^\inft P_t \Gamma [P_tf](x) \dts 
    \right)^{1/2},
    \\
    \widetilde{H}(x) & :=
    \left(
        \int\limits_0^\inft P_t \Gampri[P_tf](x)\dts 
    \right)^{1/2}.
\end{align*}

Due to the fact that $\Gampri[u]\leq2\Gamma[u]$ a.e., we immediately obtain $\widetilde{G} \le \sqrt{2} G$ and $\widetilde{H} \le \sqrt{2} H$.
In view of~\eqref{eq:Gamma+-}, the $L^2$\nobreakdash-norms of $G$, $\widetilde{G}$, $H$, and $\widetilde{H}$ are all equal to
\begin{align*}
    \left(
        \int\limits_0^\inft \intE \Gamma [P_tf] \dms\dt
    \right)^{1/2}.
\end{align*}
Impose for a moment Assumption~\ref{ass:SS}.
Observe that for $p=2$ the Bregman divergence is equal to $F_2(a,b) = (b-a)^2$. Therefore, as a straightforward consequence of the Hardy\nobreakdash--Stein identity, we obtain the following fact.
\begin{prop}
    Impose Assumption~\ref{ass:SS}. Let $f\in\LE{2}$. Then, the following equivalence between $L^2$\nobreakdash-norms holds:
    \begin{align*}
        \normL{G}{2} = \normL{\widetilde{G}}{2} = \normL{H}{2}
        = \normL{\widetilde{H}}{2}
        =
        \tfrac{1}{\sqrt{2}}\normL{f}{2}.
    \end{align*}
\end{prop}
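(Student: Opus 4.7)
The plan is to apply the Hardy--Stein identity \eqref{eq:HS-app} at the exponent $p=2$ and then unpack the resulting equality using \eqref{eq:Gamma+-}. Since $F_2(a,b)=(b-a)^2$ and Assumption~\ref{ass:SS} kills the boundary term on the left-hand side of \eqref{eq:HS-app}, one first obtains
\begin{align*}
    \normL{f}{2}^2
    =
    \int\limits_0^\inft \iintqEEd (P_tf(y)-P_tf(x))^2 \,J\dxdy\dt
    =
    2\int\limits_0^\inft \intE \Gamma[P_tf] \dms\dt,
\end{align*}
where the second equality combines \eqref{eq:E-app} with the first identity in \eqref{eq:Gamma+-}. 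Together with the second identity of \eqref{eq:Gamma+-}, this yields the ``master identity''
\begin{align*}
    \int\limits_0^\inft \intE \Gamma[P_tf] \dms\dt
    =
    \int\limits_0^\inft \intE \Gampri[P_tf] \dms\dt
    =
    \tfrac{1}{2}\normL{f}{2}^2.
\end{align*}

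The statements for $G$ and $\widetilde{G}$ now follow at once from Tonelli's theorem: swapping the order of integration in $\normL{G}{2}^2 = \intE \int_0^\inft \Gamma[P_tf](x)\dts\,m(\dx)$ (and the analogous expression for $\widetilde{G}$ with $\Gampri$ in place of $\Gamma$) returns exactly the master identity, so that $\normL{G}{2} = \normL{\widetilde{G}}{2} = \tfrac{1}{\sqrt{2}}\normL{f}{2}$.

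For $H$ and $\widetilde{H}$ one again applies Tonelli to get $\normL{H}{2}^2 = \int_0^\inft \intE P_t\Gamma[P_tf]\dms\dt$, and the analogous identity for $\widetilde{H}$. The remaining step is to absorb the outer $P_t$ via the mass-preservation identity $\intE P_tg\dms = \intE g\dms$ for nonnegative $g\in \LE{1}$; this follows from the $\LE{2}$-symmetry of $\Pt$ together with $P_t 1 = 1$ (by a standard monotone approximation of the constant function $1$ by elements of $\LE{2}\cap\LE{\infty}$). This is the one genuine obstacle in the proof: without Assumption~\ref{ass:coserv} one only has the sub-Markovian inequality $\intE P_tg\dms \le \intE g\dms$, which would downgrade the conclusion for $H$ and $\widetilde{H}$ to ``$\le$''. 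Granting the conservativeness tacit in this part of the paper, the master identity applies again and delivers $\normL{H}{2} = \normL{\widetilde{H}}{2} = \tfrac{1}{\sqrt{2}}\normL{f}{2}$, completing the four-way equality.
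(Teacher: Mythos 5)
Your proof is correct and follows essentially the same route as the paper: the Hardy--Stein identity at $p=2$ with $F_2(a,b)=(b-a)^2$ and Assumption~\ref{ass:SS} to remove the limit term, combined with \eqref{eq:Gamma+-} and Tonelli's theorem. Your observation that the step $\intE P_t g \dms = \intE g \dms$ needed for $H$ and $\widetilde{H}$ genuinely requires conservativeness (otherwise the sub-Markov property only gives ``$\le$'') is well taken --- the paper asserts the equality of all four $L^2$-norms ``in view of \eqref{eq:Gamma+-}'' without flagging this point.
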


\subsection{Additive functionals and Revuz correspondence}
\label{sub:Revuz}

In this subsection, let us assume that the process 
$X=(\Omega, \F, \Ft, (\theta_t), (X_t), (\Prb_x))$
is an arbitrary Hunt process, not necessarily pure-jump or symmetric. Assume that $\Ft$ is the minimum completed admissible filtration.

Recall that $\Pt$ is the semigroup associated with the process $\Xt$. For each $t\ge0$, an operator $P_t$ has its adjoint operator $\hat{P}_t$ on $\LE{2}$, i.e.,
$\langle P_t f, g \rangle = \langle f, \hat{P}_t g \rangle$,
for all $f,g\in\LE{2}$. Clearly, $P_t=\hat{P}_t$ for symmetric $\Xt$.
By
\begin{align*}
    R_\alpha f := \int\limits_0^\inft e^{-\alpha t} P_tf \dts,
    \qquad
    \hat{R}_\alpha f := \int\limits_0^\inft e^{-\alpha t} \hat{P}_tf \dts,
    \quad
    \alpha>0,
\end{align*}
we denote the \emph{resolvent} (resp. \emph{coresolvent}) of $\Xt$. Here, the above integrals are understood in the Bochner sense in $\LE{2}$.
The resolvent and coresolvent are strongly continuous, that is for $f\in\LE{2}$, we have $\alpha R_\alpha f \to f$ and $\alpha \hat{R}_\alpha f \to f$ in $\LE{2}$ as $\alpha\to\inft$.

Let $\alpha\geq0$. A non\nobreakdash-negative Borel function $f$ is \emph{$\alpha$\nobreakdash-excessive} (resp. \emph{$\alpha$\nobreakdash-coexcessive}) with respect to the semigroup $\Pt$, if $e^{-\alpha t} P_tf\nearrow f$ (resp.
$e^{-\alpha t} \hat{P}_tf \nearrow f$) as $t\searrow 0$.
Let $f$ be a non\nobreakdash-negative Borel function. A simple example of an $\alpha$\nobreakdash-excessive (resp. $\alpha$\nobreakdash-coexcessive) function is $\alpha R_\alpha f$ (resp. $\alpha \hat{R}_\alpha f$).
We say that a Borel measure $\excm$ on the space $\EBE$ is \emph{excessive} if it is $\sigma$\nobreakdash-finite and $\int_E P_t\ind_B \dtm \le \excm(B)$, $B\in\BE$.
In particular, if the semigroup $\Pt$ is symmetric, then the measure $m$ introduced in Section~\ref{sec:prelim} is excessive.

Let $\excm$ be a Borel measure on $\EBE$.
For a measure $\mu$ on $\EBE$ we denote $\Prb_\mu (\Lambda) := \int_E \Px(\Lambda) \,\mu(\dx)$.
Recall that we say that a set $B\subseteq E$ is \emph{nearly Borel} (relative to $\Xt$) if, for each probability measure $\mu$, there exist Borel sets $B_1, B_2 \subseteq E$ such that $B_1\subseteq B \subseteq B_2$ and
$\Prb_\mu (X_t\in B_2\setms B_1\text{ for some }t)=0$; see Definition (10.21) in Blumenthal and Getoor~\cite{blumenthal_getoor} or~\cite[p.~392]{fot}.
Before we introduce the notion of additive functionals, we need to provide some definitions of sets which are ``small'' or ``big'' with respect to the process $\Xt$ and measure $\excm$.
Denote by $\sigma_B := \inf\{t>0: X_t\in B\}$ the \emph{hitting time} of a set $B$ for $\Xt$.
We say that a nearly Borel set $D\subseteq E$ is \emph{$\excm$-polar} if
$\Pexcm(\sigma_D<\inft)=0$;
see Definition~(6.3) in Getoor and Sharpe \cite{gs_Naturality}.
A nearly Borel set $\widetilde{E}\subseteq E$ is \emph{absorbing} if $\Px(\sigma_{E\setms\widetilde{E}}<\inft) = 0$ for all $x\in\widetilde{E}$; see \cite[p.~17]{gs_Naturality}.
Finally, we say that a nearly Borel set $N\subseteq E$ is \emph{$\excm$-inessential} if $N$ is $\excm$-polar and $N^c$ is absorbing; see Definition~(6.8) in \cite{gs_Naturality}.


We call a family of functions $\At$ on $\Omega$ with values in $[-\infty,+\infty]$ an \emph{additive functional} (AF in abbreviation), if there exist a \emph{defining set} $\Lambda\in\F_\infty$ and an $\excm$-inessential set $N$ (called an \emph{exceptional set} for $\At$) such that the following conditions hold:
\begin{enumerate}[label=(A.\arabic*)]
    \item $A_t$ is measurable with respect to $\F_t$.
    \item $\Px(\Lambda) = 1$ for all $x\in E\setms N$.
    \item $\theta_t[\Lambda]\subseteq \Lambda$ for all $t>0$.
    \item $A_0(\omega)=0$ for all $\omega\in\Lambda$.
    \item $\abs{A_t(\omega)}<\inft$ for all $\omega\in\Lambda$, $t<\zeta(\omega)$.
    \item $A_t(\omega) = A_{\zeta(\omega)}(\omega)$ for all $\omega\in\Lambda$, $t\ge\zeta(\omega)$.
    \item $A_{s+t}(\omega) = A_s(\omega) + A_t(\theta_s\omega)$ for all $\omega\in\Lambda$, $t,s\ge0$.
    \item \label{ass:AF}
    For each $\omega\in\Lambda$, the function $[0,\zeta(\omega))\ni t\mapsto A_t(\omega)$ is right continuous and has left limits.
\end{enumerate}
We say that $\At$ is a \emph{positive additive functional} (PAF in abbreviation) if,
in addition, for all $\omega\in\Lambda$, $t\geq0$, we have $A_t(\omega)\geq0$.
We say that $\At$ is a \emph{positive continuous additive functional} (PCAF in abbreviation), when we replace condition \ref{ass:AF} in the above definition with an even stronger condition:
\begin{enumerate}
    \item[(A.8')] 
    For each $\omega\in\Lambda$, the function $[0,\inft)\ni t\mapsto A_t(\omega)$ is continuous and non\nobreakdash-negative.
\end{enumerate}

Later, we will see that each PCAF corresponds to a unique Borel measure within the class of so-called smooth measures.

Denote by $\BEstar$ the \emph{universal completion} of $\BE$, that is, $\BEstar := \bigcap_\mu \mathcal{B}^\mu(E)$, where $\mathcal{B}^\mu(E)$ is the completion of $\BE$ with respect to the measure $\mu$ and the above intersection is taken over all probability measures on $\EBE$.
By $\BeE$ we denote the $\sigma$\nobreakdash-field generated by $\BEstar$\nobreakdash-measurable functions which are $\alpha$\nobreakdash-excessive for some $\alpha\ge0$.
A set $B\in\BeE$ is called \emph{$\excm$-semi\nobreakdash-polar}, if
$\Pexcm(X_t\in B\text{ for uncountably many }t)=0$.
Denote by $\tau_B := \inf\{t>0: X_t\notin B\}$ the \emph{exit time} of $\Xt$ from a set $B$.
Note that $\tau_B = \sigma_{E_\Cem\setms B}$.
We say that an increasing sequence of sets $(B_n)_{n\in\N}$ in $\BeE$ is a \emph{$\excm$-generalized nest} provided
$\Pexcm(
        \lim_{n\to\inft} \tau_{B_n} < \zeta
    ) = 0$.
We call a set $B$ \emph{finely open} if for each $x\in B$ there exists a nearly Borel set $B_x\supseteq B^c$ such that
$\Px(\sigma_{B_x}>0)=1$; see Definition (4.1) in \cite{blumenthal_getoor}.
Finally, we call a Borel measure $\nu$ \emph{smooth} if it charges no $\excm$-semi\nobreakdash-polar set and admits an associated $\excm$-generalized nest $(G_n)_{n\in\N}$ of finely open sets with $\nu(G_n)<\inft$ for each $n\in\N$.

When we restrict our considerations to symmetric processes $\Xt$, then there is a regular Dirichlet form $\EDE$ associated with $\Xt$ and the class of smooth measures can be defined more easily by employing the notion of capacity corresponding to $\EDE$.

Let $\mathcal{O}$ be the family of all open subsets of $E$. For $A\subseteq E$ denote by $\mathcal{O}_A$ the family of all open supersets of $A$. For $U\in\mathcal{O}$, we define
$\mathcal{L}_U := \{u\in\DE: u\geq1\ m\text{-a.e. on }U\}$. The \emph{capacity} corresponding the Dirichlet form $\EDE$ is defined in the following way: for $A\in\mathcal{O}$ we define
\begin{align}
\label{eq:cap1}
    \Capa(A)
    :=
    \begin{cases}
        \inf_{u\in\mathcal{L}_U} (\E[u] + \normL{u}{2}^2) & \text{if } \mathcal{L}_U\neq \emptyset , \\
        \inft & \text{if } \mathcal{L}_U = \emptyset ,
    \end{cases}
\end{align}
and for any $A\subseteq E$,
\begin{align}
\label{eq:cap2}
    \Capa(A) := \inf_{U\in\mathcal{O}_A} \Capa(U).
\end{align}
An increasing sequence of sets $(B_n)_{n\in\N}$ is a $m$-generalized nest if and only if
\begin{align*}
    \lim_{n\to\inft} \Capa(K\setms B_n) = 0
    \quad
    \text{for every compact set }K.
\end{align*}
Finally, a Borel measure $\nu$ is smooth if and only if it charges no set of zero capacity and admits an associated $m$-generalized nest $(F_n)_{n\in\N}$ of closed sets such that $\nu(F_n)<\inft$ for each $n\in\N$. For a proof of the equivalence of the two definitions of a smooth measure given above; see \cite{fg_smooth}. See also \cite[p.~83]{fot} or Appendix~B of Li and Ying \cite{BivariateRevuz}.

We are ready to establish the correspondence between the class of PCAFs and the class of smooth measures. Let $\At$ be an additive functional and fix some excessive measure $\excm$ (with respect to $\Xt$).
The \emph{Revuz measure} $\nu$ of $\At$ related to a reference measure $\excm$ is defined by
\begin{align}
\label{eq:RevuzAF}
    \intE f \,\dd \nu
    :=
    \lim_{t\to0^+}
    \frac{1}{t}
    \intE
    \Ev_x \left(
        \ \int\limits_{(0,t]} f(X_s) \,\dd A_s
    \right)
    \,\excm(\dx),
    \quad
    f\in\BeE_+, 
\end{align}
or, equivalently,
\begin{align*}
    \intE f \,\dd \nu
    =
    \lim_{\beta\to \inft}
    \beta
    \intE
    \Ev_x \left(
        \ \int\limits_0^\inft e^{-\beta t} f(X_t) \,\dd A_t
    \right)
    \,\excm(\dx),
    \quad
    f\in\BeE_+.
\end{align*}
Here, $\BeE_+$ is the class of non\nobreakdash-negative $\BeE$\nobreakdash-measurable functions.
We say that such $\At$ and $\nu$ are in the \emph{Revuz correspondence}. The notions of Revuz measure and Revuz correspondence are named in honor of Revuz; see \cite{Revuz}.

We provide the following simple example of $\nu$ and $\At$ in the Revuz correspondence characterized by a Borel function $g$.
Let
\begin{align}
\label{eq:AtInt}
    A_t(\omega)
    :=
    \int\limits_0^t g(X_s(\omega)) \dss,
    \quad
    t\ge0.
\end{align}
Then $\At$ is an AF, PAF, and PCAF for $g$ bounded, non\nobreakdash-negative, and bounded non\nobreakdash-negative, respectively. Moreover
$\At$ is
in the Revuz correspondence (related to the reference measure $m$) with measure $g(x)m(\dx)$. For this we refer to \cite[p.~41]{ffgk}.
We may refer also to Section~3 of \cite{Trutnau}, where a similar result in the context of generalized Dirichlet forms was derived.

We say that processes $\At$ and $\Bt$ are \emph{$\excm$-equivalent}, if
\begin{align}
\label{eq:AFeq}
    \Pexcm(A_t \neq B_t)=0,
    \quad
    \text{for all }t>0.
\end{align}
In such a situation, we write $A_t \equi{\excm} B_t$.

The Revuz correspondence is a one\nobreakdash-to\nobreakdash-one correspondence between the class of smooth measures and the class of PCAFs (up to $\equi{\excm}$-equivalence).
For this statement, we refer to Theorem (3.11) in \cite{fg_smooth}.

If we limit our discussion to the symmetric processes $\Xt$, the Revuz correspondence can be characterized by the following formula:
\begin{align}
\label{eq:RevuzSym}
    \intE f h \,\dd \nu
    &=
    \lim_{t\to0^+}
    \frac{1}{t}
    \intE h(x)
    \Ev_x \left(
        \int\limits_0^t f(X_s) \,\dd A_s
    \right)
    \,\excm(\dx)
    \\ \notag
    &=
    \lim_{\beta\to \inft}
    \beta
    \intE
    h(x)
    \Ev_x \left(
        \int\limits_0^\inft e^{-\beta t} f(X_t) \,\dd A_t
    \right)
    \,\excm(\dx),
\end{align}
where $h$ is any $\alpha$\nobreakdash-excessive function ($\alpha\ge0$) and $f$ is any non\nobreakdash-negative Borel function; see Section~5.1 of~\cite{fot}. See also
Chapter~2 of Fukushima \cite{ffgk}
and
Appendix~B of \cite{BivariateRevuz}.

Several generalizations of identity \eqref{eq:RevuzSym} to the non-symmetric case have been proposed in the literature. In these studies, authors rely on the non-symmetric generalization of Dirichlet form: semi-Dirichlet form. Under some technical assumptions, like the sector condition, the Revuz correspondence can be characterized by:
\begin{align*}
    \intE f \hat{h} \,\dd \nu
    &=
    \lim_{t\to0^+}
    \frac{1}{t}
    \intE \hat{h}(x)
    \Ev_x \left(
        \int\limits_0^t f(X_s) \,\dd A_s
    \right)
    \,\excm(\dx)
    \\ \notag
    &=
    \lim_{\beta\to \inft}
    \beta
    \intE
    \hat{h}(x)
    \Ev_x \left(
        \int\limits_0^\inft e^{-\beta t} f(X_t) \,\dd A_t
    \right)
    \,\excm(\dx),
\end{align*}
where $\hat{h}$ is any quasi-continuous $\alpha$\nobreakdash-coexcessive function ($\alpha\ge0$). We refer here to \cite{BivariateRevuz} and \cite{Fitzsimmons}.
To prove the main result of this paper, the above formula is needed for a special case of the process $X$. To be precise, we require this formula for the space-time process of a symmetric Hunt process. To bypass the mentioned assumptions, we prove the above formula for this special case in Subsection~\ref{sub:space_time} below.


\subsection{Space-time process}
\label{sub:space_time}

Here, we revert to our global assumption that $X=(\Omega, \F, \Ft, (\theta_t), (X_t), (\Prb_x))$ is a symmetric Hunt process.
By $\st{X}=(\st{\Omega}, \st{\F}, (\st{X}_t), (\st{\Prb}_{(t_0,x)}))$ we denote its \emph{space\nobreakdash-time process}. Here,
\begin{align*}
    \st{\Omega} &:= (0,\infty)\tim\Omega, \\
    \st{\F} &:= \sigma\{\mathcal{B}((0,\infty))\tim\F \}, \\
    \st{\Prb}_{(t_0,x)} &:= \delta_{t_0} \otimes \Px,
    \quad (t_0,x)\in\st{E}_\Cem, \\
    \st{X}_t(\tau,\omega) &:=
        \begin{cases}
            (\tau+t,X_t(\omega)) & \text{when } t<\zeta(\omega) , \\
            (\infty,\Delta) & \text{otherwise},
        \end{cases}
\end{align*}
where $\st{E}_\Cem  := (0,\infty) \tim E \cup \{(\infty, \Cem)\}$. Here, $\delta_{t_0}$ denotes the Dirac delta measure at a point $t_0\in(0,\infty)$ and the point $\{(\infty, \Cem)\}$ is the cemetery state of $\Xbrevet$. The space\nobreakdash-time process $\Xbrevet$ is a non\nobreakdash-symmetric Hunt process with respect to the following admissible filtration
\begin{align*}
    \st{\F}_t := \sigma\{\mathcal{B}((0,\infty)) \tim \F_t\}.
\end{align*}

Denote the expected value on the space $(\st{\Omega}, \st{\F}, \st{\Prb}_{(t_0,x)})$ by $\Ev_{(t_0,x)}$.
A conditional expectation of any $\st{\F}_\infty$-measurable $\Phi\colon \st{\Omega}\to\R$ given $\st{\F}_t$ can be written as
\begin{align}
\label{eq:E-sp-tim}
    \Ev_{(t_0,x)} [\Phi|\st{\F}_t]
    =
    \Ev_x[\Phi(t_0,\cdot)|\F_t],
    \quad
    \st{\Prb}_{(t_0,x)}\text{-a.s.}
\end{align}
Indeed, for any $A\in\mathcal{B}((0,\infty))$ and $B\in\F_t$
\begin{align*}
    \Ev_{(t_0,x)} (\Phi \ind_{A\tim B})
    &=
    \ind_A(t_0) \Ev_x( \Ev_x[\Phi(t_0,\cdot)|\F_t] \ind_B)
    =
    \Ev_{(t_0,x)} ( \Ev_x[\Phi(t_0,\cdot)|\F_t] \ind_{A\tim B}).
\end{align*}

We provide a brief overview of the essential properties of the space-time process, primarily following \cite[Section~16]{Sharpe}.
We denote the Lebesgue measure on $(0,\infty)$ by $\dr$ and write the space $L^2((0,\infty)\times E,\mathcal{B}((0,\infty)\times E),\dr \otimes m)$  shortly as $L^2(\dr \otimes m)$.
Let $\Pbrevet$ be the semigroup of the space\nobreakdash-time process $\Xbrevet$. Then $\st{P}_t$ can be written as
\begin{align*}
    \st{P}_t f(r,x) = P_t[f(r+t,\cdot)](x),
    \quad
    f\in L^2(\dr \otimes m).
\end{align*}
Its adjoint operator $\bst{P}_t$ on $L^2(\dr \otimes m)$ is given by
\begin{align*}
    \bst{P}_t g (r,x) = \ind_{[0,r)}(t) P_t[g(r-t,\cdot)](x),
    \quad
    g\in L^2(\dr \otimes m).
\end{align*}
At the same time, the semigroup $\Pbstt$ is associated with the \emph{backward space\nobreakdash-time process} $\Xbstt$ defined on the same family of probability spaces
$(\st{\Omega}, \st{\F}, (\st{\Prb}_{(t_0,x)}))$ by
\begin{align*}
    \bst{X}_t(\tau,\omega) :=
    \begin{cases}
        (\tau-t,X_t(\omega)) & \text{for } t<\tau \wedge \zeta(\omega) , \\
        (\infty, \Cem) & \text{otherwise}.
    \end{cases}
\end{align*}
To rephrase, $\Xbstt$ represents the Cartesian product of $\Xt$ and the uniform motion to the left on $(0,\infty)$ with killing at $0$.

Note that the measure $\dr \otimes m$ is excessive with respect to the semigroup $\Pbrevet$. Indeed, for $\st{B}\in\mathcal{B}((0,\inft)\times E)$, by the symmetry of $\Pt$, we can write
\begin{align*}
    \intb_0^\inft \intE
        \st{P}_t \ind_{\st{B}}
    \dms \dr
    &=
    \intb_0^\inft \intE
        P_t[\ind_{\st{B}} (r+t,\cdot)](x)
    \,m(\dx)\dr
    =
    \intb_0^\inft \intE
       \ind_{\st{B}} (r+t,x) P_t1(x)
    \,m(\dx)\dr
    \\
    &\leq
    \intb_0^\inft \intE
       \ind_{\st{B}} (r+t,x)
    \,m(\dx)\dr
    =
    \intb_t^\inft \intE
       \ind_{\st{B}} (r,x)
    \,m(\dx)\dr
    \leq
    \dr \otimes m (\st{B}).
\end{align*}

At the end of this subsection,
we prove a non\nobreakdash-symmetric variant of equation \eqref{eq:RevuzSym} for the space\nobreakdash-time process. This result will be useful in 
Section~\ref{sec:Sharp_bracket}.
\begin{lem}
\label{lem:RevuzST}
    Let $\At$ be a PCAF related to the space\nobreakdash-time process $\Xbrevet$ described above. Let $\nu$ be the Revuz measure of $\At$ (related to the reference measure $\dr \otimes m$). Then $\nu$ satisfies
    \begin{align}
    \label{eq:RevuzST}
        \int\limits_{\mathclap{(0,\inft)\tim E}} f \hat{h} \,\dd \nu
        &=
        \lim_{t\to0^+}
        \frac{1}{t}
        \int\limits_{\mathclap{(0,\inft)\tim E}} \hat{h}(r,x)
        \Ev_{(r,x)} \left(
            \int\limits_0^t f(\st{X}_s) \,\dd A_s
        \right)
        \, \dr m(\dx)
    \end{align}
    for any $f\in\mathcal{B}^e((0,\infty)\tim E)_+$ and $\alpha$\nobreakdash-coexcessive function $\hat{h}$ ($\alpha\ge0$).
\end{lem}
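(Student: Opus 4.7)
My strategy is to reduce the identity to PCAFs of the form $A_t = \int_0^t \phi(\st{X}_u)\,\dd u$, for which the formula may be verified via a direct Fubini--duality computation, and then extend to general PCAFs by approximation.

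The starting point is the defining formula~\eqref{eq:RevuzAF} of the Revuz measure applied to the non-negative function $g := f\hat{h}$, which gives
\begin{align*}
    \int_{\mathclap{(0,\inft)\tim E}} f\hat{h}\,\dd\nu
    =
    \lim_{t\to 0^+}\frac{1}{t}
    \int_{\mathclap{(0,\inft)\tim E}}
    \Ev_{(r,x)}\!\left[\int\limits_0^t f(\st{X}_s)\hat{h}(\st{X}_s)\,\dd A_s\right]
    \dr\, m(\dx).
\end{align*}
Hence \eqref{eq:RevuzST} amounts to showing that pulling $\hat{h}$ out of $\Ev_{(r,x)}$, replacing $\hat{h}(\st{X}_s)$ by the initial value $\hat{h}(r,x) = \hat{h}(\st{X}_0)$, preserves the limit. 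Suppose first that $A_t = \int_0^t \phi(\st{X}_u)\,\dd u$ for some bounded non-negative Borel $\phi$; then, by~\eqref{eq:AtInt}, $\nu = \phi\cdot(\dr\otimes m)$. Fubini yields $\Ev_{(r,x)}[\int_0^t f(\st{X}_s)\,\dd A_s] = \int_0^t \st{P}_s(f\phi)(r,x)\,\dd s$, and multiplying by $\hat{h}(r,x)$, integrating against $\dr\otimes m$, and invoking the $L^2(\dr\otimes m)$-duality of $\Pbrevet$ and $\Pbstt$ gives
\begin{align*}
    \int \hat{h}(r,x)\,\Ev_{(r,x)}\!\left[\int\limits_0^t f(\st{X}_s)\,\dd A_s\right]\dr\, m(\dx)
    =
    \int\limits_0^t \int f\phi\cdot\bst{P}_s\hat{h}\,\dr\, m(\dx)\,\dd s.
\end{align*}
Since $\hat{h}$ is $\alpha$-coexcessive, $e^{-\alpha s}\bst{P}_s\hat{h}\nearrow \hat{h}$ as $s\to 0^+$, and a monotone/dominated convergence argument gives $\tfrac{1}{t}\int_0^t \int f\phi\,\bst{P}_s\hat{h}\,\dr\,m(\dx)\,\dd s \to \int f\phi\hat{h}\,\dr\, m(\dx) = \int f\hat{h}\,\dd\nu$, proving~\eqref{eq:RevuzST} in this elementary case.

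For an arbitrary PCAF $A$ with smooth Revuz measure $\nu$, one approximates $A$ by elementary PCAFs $A^n_t := \int_0^t u_n(\st{X}_u)\,\dd u$, where $u_n(r,x) := n\,\Ev_{(r,x)}[\int_0^\infty e^{-ns}\,\dd A_s]$; the Revuz measures $\nu_n := u_n\cdot(\dr\otimes m)$ converge to $\nu$ on a suitable class of test functions via the classical approximation theory of smooth measures (cf.\ Section~5.1 of~\cite{fot}). The main technical obstacle is the interchange of the two limits $t\to 0^+$ (the Revuz limit) and $n\to\infty$ (the PCAF approximation). This is most cleanly handled by passing to the equivalent resolvent form of~\eqref{eq:RevuzAF} (the $\beta\to\infty$ limit in the second line thereof), which replaces $t\to 0^+$ by a single $\beta\to\infty$ limit and permits the Fubini--duality calculation of the elementary case to proceed directly, avoiding the double-limit issue altogether.
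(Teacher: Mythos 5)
Your reduction to elementary PCAFs $A_t=\int_0^t\phi(\st{X}_u)\,\dd u$ is correct as far as it goes: the Fubini--duality computation and the passage $\tfrac1t\int_0^t\int f\phi\,\bst{P}_s\hat h\to\int f\phi\hat h$ (using $e^{-\alpha s}\bst{P}_s\hat h\nearrow\hat h$) are sound. But the lemma's entire content lies in the step you defer, and there the argument has a genuine gap. First, the ``classical approximation theory of smooth measures'' you cite (Section~5.1 of \cite{fot}) is developed for \emph{symmetric} Dirichlet forms, whereas the space--time process $\Xbrevet$ is non-symmetric --- that is precisely why this lemma needs a proof at all; the paper explicitly notes that the existing non-symmetric versions (\cite{BivariateRevuz}, \cite{Fitzsimmons}) require extra hypotheses such as the sector condition, which it is trying to avoid. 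You cannot borrow the convergence $\nu_n\to\nu$ on test functions from that theory without importing those hypotheses. Second, the claim that passing to the resolvent form ``avoids the double-limit issue altogether'' is not substantiated: for a general PCAF, $\Ev_{(r,x)}\bigl[\int_0^\infty e^{-\beta s}f(\st{X}_s)\,\dd A_s\bigr]$ is \emph{not} of the form $\st{R}_\beta(f\phi)(r,x)$ for any function $\phi$, so the duality computation does not ``proceed directly''; you must still go through the approximants $A^n$, and you are then facing exactly the interchange of $\beta\to\infty$ with $n\to\infty$ that you set out to avoid. Controlling $\beta U^{A^n}_\beta-\beta U^{A}_\beta$ uniformly in $\beta$ is the hard part, and it is not addressed.

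For comparison, the paper handles general PCAFs in one stroke without approximation: it passes to the $\alpha$-subprocess $\st{X}^\alpha$ (killing at an independent exponential time), observes that $\hat h\,(\dr\otimes m)$ is excessive for $\st{X}^\alpha$ and that $\hat h$ is suitably regular along the paths of $\st{X}^\alpha$ and its dual, and then invokes Corollary~(8.11) of \cite{gs_Naturality} --- which is exactly the weighted Revuz identity you are trying to prove, established there for general right processes --- before removing the killing by an elementary independence argument. If you want to keep your elementary-plus-approximation strategy, you would essentially have to reprove that Getoor--Sharpe result for the space--time process, i.e.\ justify the monotone-class/approximation machinery for a non-symmetric excessive measure; as written, the proposal does not do this.
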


Before we derive the proof of this lemma, we need to introduce the notion of the $\alpha$\nobreakdash-subprocess following \cite[Exercise (12.34)]{Sharpe}. Fix $\alpha>0$. We want to define the process $\subpr{\st{X}}$, which is obtained by killing $\Xbrevet$ at a random and independent time $T^\alpha$ following the exponential distribution with parameter $\alpha$. To be precise, let
\begin{align*}
    \subpr{\st{\Omega}} &:= (0,\inft)\tim \st{\Omega}, \\
    \subpr{\st{\F}} &:= \sigma\{\mathcal{B}((0,\infty))\tim\st{\F} \}, \\
    \subpr{\lambda}(\dd\xi) &:=  \alpha e^{-\alpha\xi} \dd \xi, \\
    \subpr{\st{\Prb}}_{(t_0,x)} &:= \subpr{\lambda} \otimes \st{\Prb}_{(t_0,x)},
    \quad (t_0,x)\in\st{E}_\Cem, \\
    \st{X}_t(\xi,\tau,\omega) &:=  \st{X}_t(\tau,\omega),\\
    \subpr{\st{X}}_t(\xi,\tau,\omega) &:=
        \begin{cases}
            \st{X}_t(\xi,\tau,\omega) & \text{when } t<\xi , \\
            (\infty,\Delta) & \text{otherwise}.
        \end{cases}
\end{align*}
Let
$T^\alpha(\xi,\tau,\omega):=\xi$ and
$\subpr{\zeta}:=\zeta \wedge T^\alpha$
be the lifetime of $\subpr{\st{X}}$.
We call $\subpr{\st{X}}$ the \emph{$\alpha$-subprocess} of $\Xbrevet$.
The semigroup of the process $\subpr{\st{X}}$ is $(e^{-\alpha t}\st{P}_t)$. We use the notation
$\Ev_{(t_0,x)}$
either for the expected value on the spaces
$(\st{\Omega}, \st{\F}, \st{\Prb}_{(t_0,x)})$
or
$(\subpr{\st{\Omega}}, \subpr{\st{\F}}, \subpr{\st{\Prb}}_{(t_0,x)})$.
Similarly, we define the $\alpha$-subprocess of $\bst{X}$ denoted by $\bst{X}^\alpha$.

\begin{proof}[Proof of Lemma~\ref{lem:RevuzST}]
    Recall that the semigroup of $\st{X}^\alpha$ is $(e^{-\alpha t}\st{P}_t)$.
    Consequently, as $\hat{h}$ is $\alpha$\nobreakdash-coexcessive for $\Xbrevet$, the measure $\hat{h} m$ is excessive for $\st{X}^\alpha$.
    Additionally, the mapping
    $[0,\inft)\ni t\mapsto \hat{h}(\bst{X}^\alpha_t)$ is $\subpr{\Prb}_m$\nobreakdash-almost surely right continuous; see Proposition (4.2) and Theorem (4.8) in \cite{blumenthal_getoor}. Due to self\nobreakdash-duality of $\Xt$ (see \eqref{eq:self-dual}), the mapping
    $(0,\zeta^\alpha)\ni t\mapsto \hat{h}(\st{X}^\alpha_{t-})$ is $\subpr{\Prb}_m$\nobreakdash-almost surely left continuous.

    Let $A^\alpha_t$ denote the PCAF $\At$ stopped at time $T^\alpha$.
    As a consequence of the above observations, we can invoke Corollary~(8.11) of \cite{gs_Naturality} and write
    \begin{align*}
        \lim_{t\to0^+}
        \frac{1}{t}
        \int\limits_{\mathclap{(0,\inft)\tim E}}
        &\Ev_{(r,x)} \left(
            \int\limits_0^t f(\st{X}^\alpha_{s-}) \hat{h}(\st{X}^\alpha_{s-}) \,\dd A^\alpha_s
        \right)
        \,\dr m(\dx)
        \\
        &=
        \lim_{t\to0^+}
        \frac{1}{t}
        \int\limits_{\mathclap{(0,\inft)\tim E}} \hat{h}(r,x)
        \Ev_{(r,x)} \left(
            \int\limits_0^t f(\st{X}^\alpha_{s-}) \,\dd A^\alpha_s
        \right)
        \,\dr m(\dx).
    \end{align*}

    Given that $\At$ is continuous and the discontinuity points of the sample paths $t\mapsto \st{X}^\alpha_t$ are at most countable, we may reformulate the above equation as follows:
    \begin{align*}
        \lim_{t\to0^+}
        \frac{1}{t}
        \int\limits_{\mathclap{(0,\inft)\tim E}}
        &\Ev_{(r,x)} \left(
            \int\limits_0^t f(\st{X}^\alpha_s) \hat{h}(\st{X}^\alpha_s) \,\dd A^\alpha_s
        \right)
        \,\dr m(\dx)
        \\
        &=
        \lim_{t\to0^+}
        \frac{1}{t}
        \int\limits_{\mathclap{(0,\inft)\tim E}} \hat{h}(r,x)
        \Ev_{(r,x)} \left(
            \int\limits_0^t f(\st{X}^\alpha_s) \,\dd A^\alpha_s
        \right)
        \,\dr m(\dx).
    \end{align*}
    We clam that we can replace $\st{X}^\alpha_s$ and $A^\alpha_s$ by $\st{X}_s$ and $A_s$ in the above formula.
    Denote
    $B_t := \int_0^t f(\st{X}_s) \,\dd A_s$
    and 
    $B_t^\alpha := \int_0^t f(\st{X}^\alpha_s) \,\dd A^\alpha_s
    = B_{t\wedge T^\alpha}$
    .
    We can write
    \begin{align*}
        &\lim_{t\to0^+}
        \frac{1}{t}
        \int\limits_{\mathclap{(0,\inft)\tim E}} \hat{h}(r,x)
        \Ev_{(r,x)} \left(
            \int\limits_0^t f(\st{X}^\alpha_s) \,\dd A^\alpha_s
        \right)
        \,\dr m(\dx)
        \\
        &=
        \lim_{t\to0^+}
        \left[ \,
            \frac{1}{t}
            \int\limits_{\mathclap{(0,\inft)\tim E}} \hat{h}(r,x)
            \Ev_{(r,x)} B_t^\alpha \ind_{\{T^\alpha > t\}}
            \,\dr m(\dx)
            +
            \frac{1}{t}
            \int\limits_{\mathclap{(0,\inft)\tim E}} \hat{h}(r,x)
            \Ev_{(r,x)} B_t^\alpha \ind_{\{T^\alpha \leq t\}}
            \,\dr m(\dx)
        \right]
        .
    \end{align*}
    When $\st{\omega}\in\subpr{\st{\Omega}}$ is such that $T^\alpha(\st{\omega}) > t$, then $B_t^\alpha (\st{\omega}) = B_t (\st{\omega})$. Moreover, as $B_t$ and $T^\alpha$ are independent, we can write
    $\Ev_{(r,x)} B_t^\alpha \ind_{\{T^\alpha > t\}}
    =
    \Ev_{(r,x)} B_t \cdot \Ev_{(r,x)} \ind_{\{T^\alpha > t\}}
    =
    e^{-\alpha t}\Ev_{(r,x)} B_t$. Therefore
    \begin{align}
    \label{eq:RevuzST_eq1}
        \lim_{t\to0^+}
        \frac{1}{t}
        \int\limits_{\mathclap{(0,\inft)\tim E}} \hat{h}(r,x)
        \Ev_{(r,x)} B_t^\alpha \ind_{\{T^\alpha > t\}}
        \,\dr m(\dx)
        =
        \lim_{t\to0^+}
        \frac{1}{t}
        \int\limits_{\mathclap{(0,\inft)\tim E}} \hat{h}(r,x)
        \Ev_{(r,x)} B_t
        \,\dr m(\dx).
    \end{align}
    When the right\nobreakdash-hand side of \eqref{eq:RevuzST_eq1} is infinite, then the claim is obviously true. Assume that it is finite. Then, since $B_t^\alpha \leq B_t$, we get
    $
    \Ev_{(r,x)} B_t^\alpha \ind_{\{T^\alpha \leq t\}}
    \leq
    \Ev_{(r,x)} B_t \ind_{\{T^\alpha \leq t\}}
    =
    \Ev_{(r,x)} B_t \cdot \Ev_{(r,x)} \ind_{\{T^\alpha \leq t\}}
    =
    (1-e^{-\alpha t}) \Ev_{(r,x)} B_t
    $.
    Therefore,
    \begin{align*}
        \lim_{t\to0^+}
        \frac{1}{t}
        \int\limits_{\mathclap{(0,\inft)\tim E}} \hat{h}(r,x)
        &\Ev_{(r,x)} B_t^\alpha \ind_{\{T^\alpha \leq t\}}
        \,\dr m(\dx)
        \\
        &\leq
        \lim_{t\to0^+}
        (1-e^{-\alpha t})
        \lim_{t\to0^+}
        \frac{1}{t}
        \int\limits_{\mathclap{(0,\inft)\tim E}} \hat{h}(r,x)
        \Ev_{(r,x)} B_t
        \,\dr m(\dx)
        =
        0.
    \end{align*}
    Summarizing, we show that 
    \begin{align*}
        \lim_{t\to0^+}
        \frac{1}{t}
        \int\limits_{\mathclap{(0,\inft)\tim E}} \hat{h}(r,x)
        &\Ev_{(r,x)} \left(
            \int\limits_0^t f(\st{X}^\alpha_s) \,\dd A^\alpha_s
        \right)
        \,\dr m(\dx)
        \\
        &=
        \lim_{t\to0^+}
        \frac{1}{t}
        \int\limits_{\mathclap{(0,\inft)\tim E}} \hat{h}(r,x)
        \Ev_{(r,x)} \left(
            \int\limits_0^t f(\st{X}_s) \,\dd A_s
        \right)
        \,\dr m(\dx)
        .
    \end{align*}
    Analogously, one can show that
    \begin{align*}
        \lim_{t\to0^+}
        \frac{1}{t}
        \int\limits_{\mathclap{(0,\inft)\tim E}}
        &\Ev_{(r,x)} \left(
            \int\limits_0^t f(\st{X}^\alpha_s) \hat{h}(\st{X}^\alpha_s) \,\dd A^\alpha_s
        \right)
        \,\dr m(\dx)
        \\
        &=
        \lim_{t\to0^+}
        \frac{1}{t}
        \int\limits_{\mathclap{(0,\inft)\tim E}}
        \Ev_{(r,x)} \left(
            \int\limits_0^t f(\st{X}_s) \hat{h}(\st{X}_s) \,\dd A_s
        \right)
        \,\dr m(\dx)
        .
    \end{align*}
    
    To summarize,
    \begin{align*}
        \lim_{t\to0^+}
        \frac{1}{t}
        \int\limits_{\mathclap{(0,\inft)\tim E}}
        &\Ev_{(r,x)} \left(
            \int\limits_0^t f(\st{X}_s) \hat{h}(\st{X}_s) \,\dd A_s
        \right)
        \,\dr m(\dx)
        \\
        &=
        \lim_{t\to0^+}
        \frac{1}{t}
        \int\limits_{\mathclap{(0,\inft)\tim E}} \hat{h}(r,x)
        \Ev_{(r,x)} \left(
            \int\limits_0^t f(\st{X}_s) \,\dd A_s
        \right)
        \,\dr m(\dx).
    \end{align*}
    Finally, since $\nu$ is the Revuz measure of $\At$, it follows from \eqref{eq:RevuzAF} that the left\nobreakdash-hand side is equal to $\int_{(0,\inft)\tim E} f \hat{h} \,\dd \nu$.
\end{proof}

\subsection{Martingale}
\label{sub:martingale}

For any $x\in E_\Cem$ a real valued process $((M_t),\Ft,\Px)$ is a \emph{martingale} provided $\Ev_x[M_t|\F_s]=M_s$ $\Px$-a.s. for all $s\leq t$. We say that $M=((M_t),\Ft,\Pxx)$ is a \emph{martingale} when $((M_t),\Ft,\Px)$ is a martingale for all $x\in E_\Cem$.

Let $M=((M_t),\Ft,\Pxx)$ be a martingale such that
$M_0=0$
and
$\Ev_x M_t^2 < \infty$ for $t>0, x\in E_\Cem$.
There exists
a unique integrable and predictable increasing process
$\langle M \rangle$ starting at zero such that $((M_t^2 - \langle M \rangle_t),\Ft,\Pxx)$ is a martingale.
In particular,
\begin{align}
\label{eq:sharp_square}
    \Ev_x \langle M \rangle_t = \Ev_x M_t^2,
    \quad
    \text{for all }
    t\geq0, x\in E_\Cem.
\end{align}
In addition, whenever $\Mt$ is an AF, $\langle M \rangle$ is
a unique PCAF (up to $\equi{m}$-equivalence) such that $((M_t^2 - \langle M \rangle_t),\Ft,\Pxx)$ is a martingale.
We call the process $\langle M \rangle$ the \emph{sharp bracket} (or \emph{predictable quadratic variation}) of $\Mt$.
For more details, we refer to Section~A.3 of \cite{fot}. See also Section~4 of \cite{Trutnau} and Th\'eor\`eme~3 in Meyer \cite[III]{Meyer}.

The \emph{square bracket} of the process $\Mt$, denoted by $[M]$, may be defined by
\begin{align*}
    [M]_t := \langle M^c \rangle_t + \sum_{s\le t} \Delta M^2_s,
\end{align*}
where $\Delta X_t := X_t - X_{t-}$ and $M^c_t$ is the continuous part of $\Mt$; see (A.3.6) in \cite{fot}.
Observe that $[M]_t=\langle M \rangle_t$ for continuous $\Mt$.

Let $1\le p<\infty$. The Burkholder--Davies--Gundy inequality states that:
\begin{align}
\label{sim:bdg}
 \Ev_x |M_T|^p \asymp \Ev_x([M]_T)^{p/2},
 \quad
 T>0, x\in E_\Cem.
\end{align}
It should be noted that, in general, the square bracket $[M]$ cannot be replaced by the sharp bracket $\langle M \rangle$. However, as demonstrated in Lenglart, L\'{e}pingle, and Pratelli \cite[Remarque~4.2]{llp} and Barlow, Jacka, and Yor \cite[(4.b’), Table~4.1, p.~162]{bjy}, the following one-sided estimates hold: if $2\le p\le \infty$, then
\begin{align}
\label{eq:brackets-}
 \Ev_x (\langle M \rangle_t)^{p/2} & \lesssim \Ev_x ([M]_t)^{p/2},
 \quad
 t>0, x\in E_\Cem,
\end{align}
and, when $1\le p \le 2$,
\begin{align}\label{eq:brackets+}
 \Ev_x ([M]_t)^{p/2} & \lesssim \Ev_x (\langle M \rangle_t)^{p/2}
 \quad
 t>0, x\in E_\Cem.
\end{align}

\section{Sharp bracket of parabolic martingale}
\label{sec:Sharp_bracket}

Fix $T > 0$ and $f\in\LpE$.
In the present article, we consider the so\nobreakdash-called \emph{parabolic martingale}:
\begin{align}
\label{eq:Mt}
    M_t  := P_{(T-t)\vee 0}f(X_{t\wedge T}) - P_T f(X_0).
\end{align}
The process $\Mt$ is a martingale $P_{T-t}f(X_t) - P_T f(X_0)$ starting at zero and stopped at $T$ with
$M_t = f(X_T) - P_T f(X_0)$ for $t\ge T$.

The parabolic martingale was first used in Ba{\~n}uelos and M{\'e}ndez-Hern{\'a}ndez~\cite{bm} for Brownian motion to obtain $L^p$-estimates for the {Beurling}--{Ahlfors} operator.
Further applications may be found in Applebaum and Ba{\~n}uelos~\cite{ab} and Ba{\~n}uelos and Bogdan~\cite{bb} where more general L{\'e}vy processes are considered.
See also \cite{bkp}, where the parabolic martingale was used to derive the Hardy--Stein identity in Orlicz spaces.

The main result of this section is the following relation between the sharp bracket of the martingale $\Mt$ and the carr\'{e} du champ operator $\Gamma$:
\begin{align}
    \label{eq:sharpMt}
    \langle M \rangle_t
    \equi{m}
    \int\limits_0^{t\wedge T} 2\Gamma [P_{T-s}f](X_s) \dss
    =
    \int\limits_0^{t\wedge T} \intEXs (P_{T-s}f(y) - P_{T-s}f(X_s))^2 \,J(X_s,\dy)\ds
    ,
\end{align}
when the function $f$ belongs to $\DE$; see Corollary~\ref{cor:sharpMt} below. Here, $\equi{\excm}$ is the equivalence relation defined by \eqref{eq:AFeq} and $\Gamma$ is given by \eqref{eq:carre}.
This result generalizes Lemma~3.1 of \cite{lw}, which requires some continuity assumption on the semigroup $\Pt$.
While the main idea of the proof remains the same, omitting the mentioned assumption requires employing the Revuz correspondence for certain functionals.

\begin{rem}
    We would like to clarify certain inaccuracies that have appeared in the literature in the context of identity \eqref{eq:sharpMt}.
    Since the process is pure-jump, the martingale $\Mt$ is not continuous in general and $[M]\neq\langle M \rangle$, hence the same statement does not hold for the square bracket $[M]$.
    While the sharp bracket $\langle M \rangle$ admits a continuous version, the same assertion cannot be assumed for $\Mt$, as was done in \cite[Proof of Lemma~3.1]{lw}. Due to this error, the authors conclude \eqref{eq:sharpMt} with $\langle M \rangle$ replaced by $[M]$; see \cite[(3.2)]{lw}.
    
    The same incorrect equation appears in \cite{bbl} for $\Xt$, assuming it is a pure-jump L{\'e}vy process; see the first displayed equation on page 471 therein. The authors refer to \cite{ab}, where the following identity is displayed; see \cite[(3.4)]{ab}. Rewriting it to our context,
    \begin{align*}
        [ M ]_t
        =
        \int\limits_0^{t\wedge T} \intRn (P_{T-s}f(X_{s-}+y) - P_{T-s}f(X_{s-}))^2 \,\tilde{N}(\ds,\dy)
        ,
    \end{align*}
    where $\tilde{N}$ is a compensated Poisson random measure of $\Xt$. This identity is an application of It\^o's formula. The authors of \cite{bbl} incorrectly rewrite the above equation in the form of \eqref{eq:sharpMt} with $\langle M \rangle$ replaced by $[M]$.
    
    Nevertheless, we want to stress that the mentioned inaccuracies do not affect the correctness of the main results of \cite{bbl} and \cite{lw}. Due to estimates \eqref{eq:brackets-} and \eqref{eq:brackets+}, the reasoning in \cite{bbl} can be easily corrected. The main result of \cite{lw}, however, remains irreparable due to a different issue; see Remark~\ref{rem:GlessH} below.
\end{rem}

Let us return to the discussion of the proof of the relation \eqref{eq:sharpMt}.
Since the martingale $\Mt$ is not an additive functional, we cannot apply the Revuz correspondence directly. Nevertheless, its time\nobreakdash-space counterpart is an additive functional:
\begin{align}
\label{eq:Mt-sp-tim}
    \st{M}_t(\tau,\omega)
    :=
    \begin{cases}
        P_{T-\tau-t} f(X_t(\omega)) - P_{T-\tau}f(X_0(\omega)) & \text{if } t+\tau\le T, \\
        f(X_{T-\tau}(\omega)) - P_{T-\tau}f(X_0(\omega)) & \text{if } \tau\le T <t+\tau, \\
        0 & \text{if } T <\tau.
    \end{cases}
\end{align}
Writing compactly,
\begin{align*}
    \st{M}_t(\tau,\omega)
    =
    P_{(T-\tau-t)\vee 0} f(X_{[t\wedge(T-\tau)]\vee 0}(\omega))
    -
    P_{(T-\tau)\vee 0} f(X_0(\omega)).
\end{align*}
Note that $\langle \st{M} \rangle$ is a PCAF.
Observe that $\st{M}_t(0,\cdot) = M_t$ and $\st{M}_0=0$.

We begin by showing that $\st{M}$ is indeed a martingale. 
\begin{prop}
    The process $\st{M}=((\st{M}_t),(\st{\F}_t),(\st{\Prb}_{(t_0,x)})$ given by \eqref{eq:Mt-sp-tim} is a martingale.
\end{prop}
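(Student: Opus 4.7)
The plan is to reduce the martingale property for $\st{M}$ on the space\nobreakdash-time space to the classical martingale property for the parabolic martingale associated with $\Xt$. Under $\st{\Prb}_{(t_0,x)}=\delta_{t_0}\otimes\Px$, the time coordinate $\tau$ is deterministically equal to $t_0$, so identity \eqref{eq:E-sp-tim} yields
\begin{equation*}
    \Ev_{(t_0,x)}[\st{M}_t\mid\st{\F}_s]
    \;=\;
    \Ev_x[\st{M}_t(t_0,\cdot)\mid\F_s],
    \qquad \st{\Prb}_{(t_0,x)}\text{-a.s.}
\end{equation*}
Hence it suffices to fix $(t_0,x)\in(0,\infty)\tim E$ and show that $t\mapsto \st{M}_t(t_0,\cdot)$ is a martingale on $(\Omega,\F,(\F_t),\Px)$. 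If $t_0\ge T$ this process is identically zero and there is nothing to prove. If $t_0<T$, setting $T':=T-t_0>0$, one reads off directly from \eqref{eq:Mt-sp-tim} that $\st{M}_\cdot(t_0,\cdot)$ is precisely the parabolic martingale with horizon $T'$,
\begin{equation*}
    M^{T'}_t \;:=\; P_{(T'-t)\vee 0}f\bigl(X_{t\wedge T'}\bigr) \;-\; P_{T'}f(X_0).
\end{equation*}

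The martingale property for $M^{T'}$ is classical and follows from the Markov property of $\Xt$ combined with the semigroup law. For $s\le t\le T'$, the Markov property gives
\begin{equation*}
    \Ev_x\bigl[P_{T'-t}f(X_t)\,\bigm|\,\F_s\bigr]
    \;=\;
    P_{t-s}\bigl[P_{T'-t}f\bigr](X_s)
    \;=\;
    P_{T'-s}f(X_s),
\end{equation*}
so that $\Ev_x[M^{T'}_t\mid\F_s]=M^{T'}_s$. The case $s\le T'\le t$ reduces to the same identity once one observes that $M^{T'}_t=f(X_{T'})-P_{T'}f(X_0)$ is $\F_{T'}$\nobreakdash-measurable and that $\Ev_x[f(X_{T'})\mid\F_s]=P_{T'-s}f(X_s)$. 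The case $T'\le s\le t$ is trivial, since both $M^{T'}_t$ and $M^{T'}_s$ equal $f(X_{T'})-P_{T'}f(X_0)$.

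The main technical obstacle is integrability: one needs $\Ev_x|\st{M}_t(t_0,\cdot)|<\infty$. This will come from the contraction estimate
\begin{equation*}
    \Ev_x\bigl|P_{(T'-t)\vee 0}f(X_{t\wedge T'})\bigr|
    \;\le\;
    P_{t\wedge T'}\bigl(|P_{(T'-t)\vee 0}f|\bigr)(x)
    \;\le\;
    P_{T'}|f|(x),
\end{equation*}
which is finite for $m$\nobreakdash-a.e., equivalently quasi\nobreakdash-every, $x\in E$, as $f\in\LpE$ and we work with quasi\nobreakdash-continuous representatives throughout the paper. Gathering these observations, the martingale identity for $\st{M}$ passes to $\st{\Prb}_{(t_0,x)}$\nobreakdash-a.s.\ via \eqref{eq:E-sp-tim} for quasi\nobreakdash-every starting point $(t_0,x)\in(0,\infty)\tim E$, which is the appropriate notion of ``for all $(t_0,x)$'' in the Dirichlet form setting used throughout this article.
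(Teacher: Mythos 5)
Your proposal is correct and follows essentially the same route as the paper: reduce via \eqref{eq:E-sp-tim} to the one-parameter filtration and then apply the Markov property of $\Xt$ (the paper phrases this through the tower property after writing $P_{T-t_0-t}f(X_t)=\Ev_x[f(X_{T-t_0})\mid\F_t]$, which is equivalent to your use of the semigroup law). Your additional remarks on integrability and on the quasi-everywhere caveat are sensible refinements that the paper's proof leaves implicit.
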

\begin{proof}
    According to the fact that $\st{M}$ is the process $P_{T-\tau-t} f(X_t) - P_{T-\tau}f(X_0)$ stopped at $t=T-\tau$, it is enough to consider $t+\tau\le T$.
    Employing the Markov property of the Hunt process $\Xt$, we have
    \begin{align}
    \label{eq:MarkovProp-Pt}
        P_{T-t_0-t} f(X_t) = \Ev_{X_t} f(X_{T-t_0-t})
        =
        \Ev_x [f(X_{T-t_0})|\F_t],
        \quad
        \Px\text{-a.s.}
    \end{align}
    for all $x\in E_\Cem$. Let $0\le s\le t\le T-t_0$. Due to \eqref{eq:E-sp-tim} and \eqref{eq:MarkovProp-Pt},
    \begin{align*}
        \Ev_{(t_0,x)} [\st{M}_t | \st{\F}_s]
        &=
        \Ev_x [ P_{T-t_0-t} f(X_t)| \F_s] - P_{T-t_0}f(x)
        \\
        &=
        \Ev_x [ \Ev_x [f(X_{T-t_0})|\F_t]| \F_s] - P_{T-t_0}f(x)
        \\
        &=
        \Ev_x [ f(X_{T-t_0})| \F_s] - P_{T-t_0}f(x),
        \quad
        \Prb_{(t_0,x)}\text{-a.s.}
    \end{align*}
    Again, by \eqref{eq:MarkovProp-Pt},
    \begin{align*}
        \Ev_x [ f(X_{T-t_0})| \F_s] - P_{T-t_0}f(x)
        =
        P_{T-t_0-s} f(X_s) -  P_{T-t_0}f(x)
        =
        \st{M}_s,
        \quad
        \Prb_{(t_0,x)}\text{-a.s.}
    \end{align*}
    Since $x\in E_\Cem$ and $t_0>0$ were chosen arbitrarily, the proof is complete.
\end{proof}

We are ready to prove the main theorem of this section.

\begin{thm}
\label{thm:sharpMt}
    Let $f\in\DE$. Let $\st{M}$ be the martingale given by \eqref{eq:Mt-sp-tim}. Then
    \begin{align}
    \label{eq:At}
        \langle \st{M} \rangle_t
        \equi{\dr \otimes m}
        \int\limits_0^t 2\Gamma [P_{T-\tau-s}f](X_s(\omega)) \ind_{(0,T]}(\tau+s) \dss,
    \end{align}
    where $\Gamma$ is the carr\'{e} du champ operator given by \eqref{eq:carre} and $\equi{\excm}$ is the equivalence relation defined by \eqref{eq:AFeq}.
\end{thm}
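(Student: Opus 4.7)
My strategy is to invoke the uniqueness in the Revuz correspondence: both sides of \eqref{eq:At} are PCAFs of the space\nobreakdash-time process $\st{X}$ (the left by the general theory of Subsection~\ref{sub:martingale}; the right as $A'_t := \int_0^t g(\st{X}_s)\dss$ with $g(r,x) := 2\Gamma[P_{T-r}f](x)\ind_{(0,T]}(r)$, as in the paragraph following \eqref{eq:AtInt}), so it suffices to prove that they share the same Revuz measure relative to $\dr \otimes m$. The Revuz measure of $A'$ is simply $g\cdot(\dr \otimes m)$, and $g \in L^1(\dr \otimes m)$ because \eqref{neq:carreCon} combined with the energy estimate $\E[P_sf]\le\E[f]$ yields $\int_0^T\int_E\Gamma[P_{T-r}f]\dms\dr \le T\,\E[f]<\infty$.

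To compute the Revuz measure of $\langle \st{M}\rangle$, I will apply Lemma~\ref{lem:RevuzST} with the test function $1$, which via \eqref{eq:sharp_square} reduces the problem to evaluating $\Ev_{(r,x)}\st{M}_t^2$. In the main case $r+t\le T$, one uses the semigroup identity $P_tP_{T-r-t}f = P_{T-r}f$ to cancel the cross\nobreakdash-term and obtain
\begin{align*}
    \Ev_{(r,x)}\st{M}_t^2 = P_t\bigl((P_{T-r-t}f)^2\bigr)(x) - \bigl(P_{T-r}f(x)\bigr)^2.
\end{align*}
I then apply Lemma~\ref{lem:difPtPTmtf2} with $T$ replaced by $T-r$ and integrate its $\LE{1}$\nobreakdash-derivative between $0$ and $t$, yielding
\begin{align*}
    P_t\bigl((P_{T-r-t}f)^2\bigr) - (P_{T-r}f)^2 = \int_0^t 2P_s\Gamma[P_{T-r-s}f]\dss,
\end{align*}
which holds $m$\nobreakdash-a.e.\ pointwise. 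The boundary case $r < T < r+t$ reduces to the same identity with $t$ replaced by $T-r$ (since $\st{M}_\cdot$ is stopped at $T-r$), and the case $r\ge T$ is trivial. Unifying the three regimes via the indicator $\ind_{(0,T]}(r+s)$ and Fubini gives
\begin{align*}
    \Ev_{(r,x)}\st{M}_t^2 = \int_0^t 2P_s\Gamma[P_{T-r-s}f](x)\ind_{(0,T]}(r+s)\dss = \Ev_{(r,x)}A'_t.
\end{align*}

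Consequently, Lemma~\ref{lem:RevuzST} delivers $\int \hat{h}\,d\nu_{\langle\st{M}\rangle} = \int \hat{h}\,d\nu_{A'}$ for every $\alpha$\nobreakdash-coexcessive $\hat{h}$. Since the copotentials $\hat{R}_\beta\varphi$ are coexcessive and satisfy $\beta\hat{R}_\beta\varphi\to\varphi$ as $\beta\to\infty$ for bounded non\nobreakdash-negative Borel $\varphi$, a standard approximation forces $\nu_{\langle\st{M}\rangle} = g\cdot(\dr \otimes m) = \nu_{A'}$, and the Revuz bijection then yields the claimed $\equi{\dr \otimes m}$\nobreakdash-equivalence.

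I expect the main technical obstacle to be the boundary case $r+t>T$, where I must ensure that the $\LE{1}$\nobreakdash-derivative from Lemma~\ref{lem:difPtPTmtf2} extends continuously up to the endpoint $t = T-r$ so that the fundamental theorem of calculus remains applicable (this follows from the absolute convergence $\int_0^{T-r}\norm{2P_s\Gamma[P_{T-r-s}f]}_1\dss\le 2(T-r)\,\E[f]$). A secondary subtlety is the preliminary verification that $\st{M}$ is genuinely an additive functional of $\st{X}$, which I will establish via the identity $\st{M}_{s+t} = \st{M}_s + \st{M}_t\circ\theta_s$ using the Markov property exactly as in the proposition preceding this theorem.
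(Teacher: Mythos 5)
Your proposal is correct and follows essentially the same route as the paper: uniqueness in the Revuz correspondence, Lemma~\ref{lem:RevuzST} combined with \eqref{eq:sharp_square}, the computation of $\Ev_{(r,x)}\st{M}_t^2$ via the semigroup identity and Lemma~\ref{lem:difPtPTmtf2}, and the extension from coexcessive test functions to general ones via the copotentials $\beta\hat{R}_\beta\varphi$. The only (harmless) difference is that you identify $\Ev_{(r,x)}\st{M}_t^2$ with $\Ev_{(r,x)}A'_t$ exactly for every $t$, which lets you bypass the explicit $h\to0^+$ dominated-convergence computation that the paper carries out.
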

\begin{proof}
    Observe that the additive functional on the right-hand side of \eqref{eq:At} takes the form \eqref{eq:AtInt} with $\st{X}_s$ and
    $g(t,x):=2\Gamma [P_{T-t}f](x) \ind_{(0,T]}(t)$. Therefore, its Revuz measure is
    \begin{align*}
        2\Gamma [P_{T-t}f](x) \ind_{(0,T]}(t) m(\dx)\dt.
    \end{align*}

    We denote by $\nu$ the Revuz measure of $\langle\st{M}\rangle$ and take an arbitrary bounded $\alpha$\nobreakdash-coexcessive function $\hat{f}$ with respect to the space-time process $\Xbrevet$.
    By applying \eqref{eq:RevuzST} and \eqref{eq:sharp_square}, we derive
    \begin{align}
    \label{eq:sharpMt-proof-0}
        \int\limits_{\mathclap{(0,\inft)\tim E}} \hat{f} \,\dd \nu
        &=
        \lim_{h\to0^+}
        \frac{1}{h}
        \int\limits_0^\inft \intE \hat{f}(t,x)
        \Ev_{(t,x)}
            \langle\st{M}\rangle_h
        \,m(\dx)\dt
        \\ \notag
        &=
        \lim_{h\to0^+}
        \frac{1}{h}
        \int\limits_0^\inft \intE \hat{f}(t,x)
        \Ev_{(t,x)}
            \st{M}_h^2
        \,m(\dx)\dt.
    \end{align}
    
    If $h+t\le T$, then
    $\Ev_x P_{T-t-h}f(X_h)=P_hP_{T-t-h}f(x) = P_{T-t}f(x)$. Hence, by \eqref{eq:E-sp-tim},
    \begin{align}
    \label{eq:sharpMt-proof-1}
        \Ev_{(t,x)} \st{M}_h^2
        &=
        \Ev_x \st{M}_h^2(t,\cdot)
        =
        \Ev_x (P_{T-t-h}f(X_h))^2 - 2 P_{T-t}f(x) \Ev_x P_{T-t-h}f(X_h)
        +
        (P_{T-t}f(x))^2
        \notag \\
        &=
        P_h [(P_{T-t-h}f)^2](x)
        -
        (P_{T-t}f(x))^2.
    \end{align}
    Moreover, by Lemma~\ref{lem:difPtPTmtf2},
    we get
    \begin{align}
    \label{eq:sharpMt-proof-2}
        P_h [(P_{T-t-h}f)^2]
        -
        (P_{T-t}f)^2
        &=
        \int\limits_0^h 2P_s(\Gamma[P_{T-t-s}f]) \dss
        \\ \notag
        &=
        h \int\limits_0^1 2P_{hs}(\Gamma[P_{T-t-hs}f]) \dss.
    \end{align}
    Here, the integral on the right\nobreakdash-hand side is understood in the Bochner sense in $\LE{1}$.

    Similarly, when $t\le T <h+t$,
    \begin{align}
    \label{eq:sharpMt-proof-1.2}
        \Ev_{(t,x)} \st{M}_h^2
        &=
        P_{T-t} [f^2](x)
        -
        (P_{T-t}f(x))^2
    \end{align}
    and
    \begin{align}
    \label{eq:sharpMt-proof-2.2}
        P_{T-t} [f^2]
        -
        (P_{T-t}f)^2
        &=
        \int\limits_0^{T-t} 2P_s(\Gamma[P_{T-t-s}f]) \dss
        \\ \notag
        &=
        h \int\limits_0^1 2P_{hs}(\Gamma[P_{T-t-hs}f]) \ind_{[0,T-t]}(hs) \dss.
    \end{align}

    Obviously, $\Ev_{(t,x)} \st{M}_h^2=0$ for $T <t$.

    Combining \eqref{eq:sharpMt-proof-0}, \eqref{eq:sharpMt-proof-1}, \eqref{eq:sharpMt-proof-2}, \eqref{eq:sharpMt-proof-1.2}, and \eqref{eq:sharpMt-proof-2.2}, we can write
    \begin{align*}
        \int\limits_{\mathclap{(0,\inft)\tim E}} \hat{f} \,\dd \nu
        &=
        \lim_{h\to0^+}
        \int\limits_0^T \intE \hat{f}(t,\cdot)
        \int\limits_0^1
        2P_{hs}(\Gamma[P_{T-t-hs}f])
        \times \\
        &\quad\times
        \left[
            \ind_{(0,T-h]}(t) + \ind_{[0,T-t]}(hs)\ind_{(T-h,T]}(t)
        \right]
        \,\ds\dm\dt
        \\
        &=
        \lim_{h\to0^+}
        \int\limits_0^T
        \int\limits_0^1
        \intE 2 \hat{f}(t,\cdot)
        P_{hs}(\Gamma[P_{T-t-hs}f])
        \times \\
        &\quad\times
        \left[
            \ind_{(0,T-h]}(t) + \ind_{[0,T-t]}(hs)\ind_{(T-h,T]}(t)
        \right]
        \dms\ds\dt.
    \end{align*}
    The above follows from an application of Tonelli's theorem.
    Note that if $t\in(0,T-h]$, then $t\leq T-h \leq T-hs$, hence $\ind_{[0,T-t]}(hs)=1$.
    Therefore, for all $t\in(0,T]$ we have
    $\ind_{(0,T-h]}(t) + \ind_{[0,T-t]}(hs)\ind_{(T-h,T]}(t) = \ind_{[0,T-t]}(hs)$.
    Now, we claim that
    \begin{align*}
        \lim_{h\to0^+}
        \int\limits_0^T
        \int\limits_0^1
        &\intE 2 \hat{f}(t,\cdot)
        P_{hs}(\Gamma[P_{T-t-hs}f])
        \ind_{[0,T-t]}(hs)
        \dms\ds\dt
        \\
        &=
        \int\limits_0^T
        \int\limits_0^1
        \lim_{h\to0^+}
        \intE 2 \hat{f}(t,\cdot)
        P_{hs}(\Gamma[P_{T-t-hs}f])
        \dms
        \ind_{[0,T-t]}(hs)
        \dss\dt.
    \end{align*}
    For all sufficiently small $h\ge0$, due to the boundedness of $\hat{f}$,
    \begin{align*}
        \intE 2 \hat{f}(t,\cdot)
        P_{hs}(\Gamma[P_{T-t-hs}f])
        \dms
        &\le
        2
        \intE
        \Gamma[P_{T-t-hs}f]
        \dms
        \sup_{(r,x)\in(0,T]\tim E} \abs{\hat{f}(r,x)}
        \\
        &=
        2
        \E[
        P_{T-t-hs}f
        ]
        \sup_{(r,x)\in(0,T]\tim E} \abs{\hat{f}(r,x)}
        \\
        &\le
        2
        \E[f]
        \sup_{(r,x)\in(0,T]\tim E} \abs{\hat{f}(r,x)}
        <
        \inft.
    \end{align*}
    At this point, we made use of the assumption that $f\in\DE$. The dominated convergence theorem implies our claim.

    Lemma~\ref{lem:difPtPTmtf2} states that the mapping $[0,T-t)\ni r\mapsto P_{r}(\Gamma[P_{T-t-r}f])\in\LE{1}$ is continuous. Thus, $P_{hs}(\Gamma[P_{T-t-hs}f])\to\Gamma[P_{T-t}f]$ in $\LE{1}$ as $h\to0^+$. Hence,
    \begin{align*}
        \lim_{h\to0^+}
        \intE 2 \hat{f}(t,\cdot)
        P_{hs}(\Gamma[P_{T-t-hs}f])
        \dms
        =
        \intE 2 \hat{f}(t,\cdot)
        \Gamma[P_{T-t}f]
        \dms.
    \end{align*}
    Clearly,
    $
        \ind_{[0,T-t]}(hs)
        \to
        1
    $
    for every $t\in(0,T]$ as $h\to0^+$.

    Following the above observations, we obtain
    \begin{align*}
        \int\limits_{\mathclap{(0,\inft)\tim E}} \hat{f} \,\dd \nu
        &=
        \int\limits_0^T
        \int\limits_0^1
        \intE 2 \hat{f}(t,\cdot)
        \Gamma[P_{T-t}f]
        \dms
        \ds\dt
        \\
        &=
        \int\limits_0^\inft
        \intE
        \hat{f}(t,x)\cdot 2\Gamma[P_{T-t}f](x) \ind_{(0,T]}(t) \,m(\dx)\dt.
    \end{align*}
    
    The above identity can be extended to any non\nobreakdash-negative $\hat{f}\in\C_c((0,\inft)\tim E)$. Indeed, $\hat{f}$ can be approximated by the bounded $n$\nobreakdash-coexcessive functions
    $\hat{f}_n:=n\hat{R}_n \hat{f}$, where
    $\hat{R}_n$ is the coresolvent of the space-time process $(\st{X}_t)_{t\ge0}$.
    Since $\hat{f}_n\to\hat{f}$ in $\LE{2}$ as $n\to\inft$,
    there exists a subsequence $(\hat{f}_{n_k})_{k\in\N}$ such that $\hat{f}_{n_k}\to\hat{f}$ almost everywhere as $k\to\inft$.

    Finally, since $\hat{f}$ is arbitrary non\nobreakdash-negative function in $\C_c((0,\inft)\tim E)$, we obtain the following equality of the Revuz measures:
    \begin{align*}
        \nu(\dt,\dx) = 2\Gamma [P_{T-t}f](x) \ind_{(0,T]}(t) m(\dx)\dt
        .
    \end{align*}
    The statement of the theorem follows from the uniqueness of the PCAF with a given Revuz measure.
\end{proof}

With the above theorem at hand, due to the fact that $\langle \st{M} \rangle_t(0,\cdot) = \langle M\rangle_t$, we obtain the desired result.

\begin{cor}
\label{cor:sharpMt}
    Let $f\in\DE$. Let $\Mt$ be the martingale given by \eqref{eq:Mt}. Then the equivalence~\eqref{eq:sharpMt} holds.
\end{cor}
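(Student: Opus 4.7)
The plan is to derive the corollary by specializing Theorem~\ref{thm:sharpMt} to the initial time $\tau=0$ of the space\nobreakdash-time process. Direct inspection shows $\st{M}_t(0,\omega) = P_{(T-t)\vee 0}f(X_{t\wedge T}(\omega)) - P_T f(X_0(\omega)) = M_t(\omega)$, while the right\nobreakdash-hand side of \eqref{eq:At} at $\tau=0$ reduces to $\int_0^{t\wedge T} 2\Gamma[P_{T-s}f](X_s)\dss$, which by the definition \eqref{eq:carre} of $\Gamma$ matches the right\nobreakdash-hand side of \eqref{eq:sharpMt}.

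First, I would verify that the sharp bracket of $\st{M}$ computed on the space\nobreakdash-time probability space $(\st{\Omega},\st{\F},\st{\Prb}_{(\tau,x)})$ coincides, for each fixed $\tau$, with the sharp bracket of the ordinary square\nobreakdash-integrable martingale $\omega\mapsto \st{M}_t(\tau,\omega)$ on $(\Omega,\F_t,\Px)$. This is a direct consequence of $\st{\Prb}_{(\tau,x)}=\delta_\tau\otimes\Px$ combined with the conditional expectation identity \eqref{eq:E-sp-tim} and the fact that $f\in\DE$ gives finite second moments. Setting $\tau=0$ then identifies $\langle \st{M}\rangle_t(0,\cdot)$ with $\langle M\rangle_t$, and together with Theorem~\ref{thm:sharpMt} this yields \eqref{eq:sharpMt}.

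The hard part is that Theorem~\ref{thm:sharpMt} only provides an $\equi{\dr\otimes m}$\nobreakdash-equivalence, whereas $\{0\}$ is $\dr$\nobreakdash-null, so direct substitution of $\tau=0$ is not automatic. The natural workaround is to reapply Theorem~\ref{thm:sharpMt} with a larger horizon $\tilde T > T$: then the space\nobreakdash-time martingale with horizon $\tilde T$ evaluated at $\tau_0 := \tilde T - T$ coincides with $M$, and varying $\tilde T$ over a set of full $\dr$\nobreakdash-measure outside the exceptional set allows one to select an admissible value of $\tau_0$. Combined with the horizon continuity furnished by Lemma~\ref{lem:difPtPTmtf2} and the strong continuity of $\Pt$, this gives the statement for the target horizon $T$, completing the identification $\langle M\rangle_t \equi{m} \int_0^{t\wedge T} 2\Gamma[P_{T-s}f](X_s)\dss$.
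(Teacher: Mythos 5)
Your reduction is the same one the paper uses: the paper disposes of the corollary in a single sentence, asserting $\langle \st{M}\rangle_t(0,\cdot)=\langle M\rangle_t$ and invoking Theorem~\ref{thm:sharpMt}, so the difficulty you point out --- that the theorem only gives an $\equi{\dr\otimes m}$-equivalence while the relevant starting slice is $\dr$-null --- is a legitimate one that the paper itself glosses over. However, your workaround has a genuine gap at the ``selection'' step. For a fixed enlarged horizon $\tilde T>T$, Theorem~\ref{thm:sharpMt} provides an exceptional $(\dr\otimes m)$-null set of starting points $(\tau,x)$ depending on $\tilde T$, and what you need is that its slice at the \emph{specific} value $\tau_0=\tilde T-T$ is $m$-null. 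Varying $\tilde T$ does not rescue this: the pairs $(\tilde T,\tau)$ you are permitted to use lie on the line $\tau=\tilde T-T$, which is null in the $(\tilde T,\tau)$-plane, so no Fubini-type argument produces even one admissible $\tilde T$. Equivalently, the statement extracted from horizon $\tilde T$ at shift $\tau$ depends only on the difference $\tilde T-\tau$, so the theorem (applied for any or all $\tilde T$) yields the corollary for Lebesgue-almost every horizon $T'$, never for a prescribed one.

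The limiting argument you gesture at is the right way to close this, but it requires more than Lemma~\ref{lem:difPtPTmtf2}. Take admissible horizons $T'_n\to T$ and write $M^{(T')}$, $B^{(T')}_t:=\int_0^{t\wedge T'}2\Gamma[P_{T'-s}f](X_s)\dss$ for the parabolic martingale \eqref{eq:Mt} and the candidate bracket with horizon $T'$. The convergence $B^{(T'_n)}_t\to B^{(T)}_t$ in $L^1(\Pm)$ does follow from \eqref{neq:carreCon} and the strong continuity of $\Pt$ on $\DE$ equipped with $\sqrt{\E[\cdot]}$; but on the left-hand side you must also show $\langle M^{(T'_n)}\rangle_t\to\langle M^{(T)}\rangle_t$ in $\Pm$-probability, and convergence of sharp brackets is not automatic from convergence of the martingales. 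It can be supplied, e.g., via the Kunita--Watanabe-type estimate $\bigl|\langle M^{(T'_n)}\rangle_t^{1/2}-\langle M^{(T)}\rangle_t^{1/2}\bigr|^2\le\langle M^{(T'_n)}-M^{(T)}\rangle_t$ together with $\intE\Ev_x\langle M^{(T'_n)}-M^{(T)}\rangle_t\,m(\dx)=\intE\Ev_x\bigl(M^{(T'_n)}_t-M^{(T)}_t\bigr)^2\,m(\dx)\to0$, but this step is missing from your sketch. (An alternative repair that avoids horizons altogether is to invoke the stronger form of PCAF uniqueness --- agreement of the two functionals off an $(\dr\otimes m)$-inessential set $N$ --- and then to check that every time-section of such an $N$ is $m$-null, using that the time coordinate of $\st{X}$ moves deterministically and the symmetry of $\Pt$.)
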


\section{Square functions}
\label{sec:square_fun}

\subsection{Square function $G$}

Our starting point is the following square function:
\begin{align*}
    G(x) =
    \left(
        \int\limits_0^\inft \Gamma [P_tf](x) \dts
    \right)^{1/2}.
\end{align*}
It does not possess the desired boundedness of $p$\nobreakdash-norm for all $1<p<\infty$. In \cite{bbl}, it was shown that for the Dirichlet form corresponding to the Cauchy process, the square function $G$ may be too large for certain $f \in \LpE$ with $1 < p < 2$; see \cite[Example~2]{bbl}.
As we will see later in Subsection~\ref{sub:H_func}, the square function $H$ is affected by the same problem; see Example~\ref{cex:Cauchy2} below. This is the reason for working with the operator $\Gampri$ instead of $\Gamma$.

However, the authors of \cite{bbl} demonstrated that $\normLp{G} \asymp \normLp{f}$ for $2 < p < \infty$ in the context of symmetric L{\'e}vy processes in the Euclidean space. These results were later extended to the non-symmetric case by Ba{\~n}uelos and Kim in \cite{bk}.

For the broader class of processes considered here, the lower bound for $\normLp{G}$ remains valid when $2 < p < \infty$. As shown in the next subsection, the Hardy\nobreakdash--Stein identity leads to the following result.
\begin{prop}
    Impose Assumption~\ref{ass:SS}.
    Let $2<p<\infty$ and $f\in\LpE$. Then,
    \begin{align*}
        \normLp{f} \lesssim \normLp{G} .
    \end{align*}
\end{prop}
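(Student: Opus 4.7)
The plan is to start from the Hardy--Stein-type equivalence \eqref{sim:HS_2pinf}, which under Assumption~\ref{ass:SS} gives, for $2 \le p < \infty$,
\begin{align*}
    \normLp{f}^p
    \asymp
    \int\limits_0^\inft \intE \Gamma[P_tf](x)\,|P_tf(x)|^{p-2} \,m(\dx)\dt.
\end{align*}
The goal is to dominate the right-hand side by a product of $\normLp{G}^{2}$ and a power of $\normLp{f}$.

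First, I would bound $|P_tf(x)|^{p-2}\le (f^*(x))^{p-2}$ pointwise, where $f^*(x):=\sup_{t\ge0}|P_tf(x)|$ is the maximal function. This uses crucially that $p>2$, so the exponent $p-2$ is positive and the estimate preserves the sign. Substituting and applying Tonelli's theorem to pull the $t$-integral inside the spatial integral gives
\begin{align*}
    \int\limits_0^\inft \intE \Gamma[P_tf](x)\,|P_tf(x)|^{p-2} \,m(\dx)\dt
    \le
    \intE (f^*(x))^{p-2} \left(\int\limits_0^\inft \Gamma[P_tf](x)\dt\right) \,m(\dx)
    =
    \intE (f^*(x))^{p-2} G(x)^2 \,m(\dx),
\end{align*}
by the definition of the square function $G$.

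Next, I would apply H\"older's inequality with conjugate exponents $p/(p-2)$ and $p/2$ (both finite since $p>2$) to obtain
\begin{align*}
    \intE (f^*)^{p-2} G^2 \dms
    \le
    \normLp{f^*}^{p-2} \normLp{G}^2.
\end{align*}
Finally, Stein's maximal inequality (Lemma~\ref{lem:Stein_ineq}) gives $\normLp{f^*}\lesssim \normLp{f}$, and combining everything yields
\begin{align*}
    \normLp{f}^p \lesssim \normLp{f}^{p-2}\normLp{G}^2,
\end{align*}
from which $\normLp{f}\lesssim \normLp{G}$ follows after dividing by $\normLp{f}^{p-2}$ (the case $\normLp{f}=0$ being trivial).

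The argument is largely mechanical, and the only genuinely subtle ingredient is the Hardy--Stein equivalence \eqref{sim:HS_2pinf}, which is where the strong stability assumption enters in order to get a two-sided bound rather than just the lower bound available without Assumption~\ref{ass:SS}. The restriction $p>2$ is essential, not merely technical: it is needed both to ensure $(f^*)^{p-2}$ is a nonnegative weight and to ensure the H\"older exponent $p/(p-2)$ is finite so that the maximal inequality can absorb it.
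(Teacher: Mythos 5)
Your proof is correct and takes essentially the same approach as the paper: the paper deduces this proposition from the corresponding lower bound for $\widetilde{G}$ (Theorem~\ref{thm:Gprim_less_2inf}) together with the pointwise inequality $\widetilde{G}\le\sqrt{2}\,G$, and the proof of that theorem is precisely your argument with $\Gampri$ and $\widetilde{G}$ in place of $\Gamma$ and $G$ — the same use of the Hardy--Stein equivalence \eqref{sim:HS_2pinf}, the bound $|P_tf|^{p-2}\le (f^*)^{p-2}$, H\"older's inequality with exponents $p/(p-2)$ and $p/2$, and Stein's maximal inequality. Working directly with $\Gamma$ rather than routing through $\Gampri$ is an immaterial difference, since \eqref{sim:HS_2pinf} provides the two-sided comparison for both operators when $p\ge 2$.
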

The above results follow directly from $\widetilde{G} \le \sqrt{2} G$ and the estimate for $\widetilde{G}$, which is provided in Theorem~\ref{thm:Gprim_less_2inf} below.

\subsection{Square function $\widetilde{G}$}
\label{sub:Gprim_func}

Let us now consider the following square function:
\begin{align*}
    \widetilde{G}(x) & := 
    \left(
        \int\limits_0^\inft \Gampri[P_tf](x) \dts 
    \right)^{1/2}
    .
\end{align*}

The square function $\widetilde{G}$ overcomes the unboundedness issue for $1<p<2$ raised in Example~2 of \cite{bbl}. By employing the Hardy\nobreakdash--Stein identity, we deduce the following two estimates.

\begin{thm}
\label{thm:Gprim_less_12}
    Let $1<p\le2$ and $f\in\LpE$. Then,
    \begin{align*}
        \normLp{\widetilde{G}} \lesssim \normLp{f}.
    \end{align*}
\end{thm}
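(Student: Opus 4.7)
The plan is to combine the Hardy--Stein estimate~\eqref{neq:HS_12_bezSS} with Stein's maximal inequality (Lemma~\ref{lem:Stein_ineq}) via a H\"older interpolation that compensates for the weight $|P_tf(x)|^{p-2}$ which is present on the right-hand side of~\eqref{neq:HS_12_bezSS} but \emph{missing} from the definition of $\widetilde{G}$. The decisive point is that for $1<p\le 2$ the exponent $2-p$ is non-negative, so the absent factor $|P_tf(x)|^{2-p}$ can be dominated pointwise by the maximal function $f^{*}(x):=\sup_{t\ge 0}|P_tf(x)|$, which Stein's inequality controls in $L^{p}$.

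First I would insert the weight into $\widetilde{G}(x)^{2}$ and estimate
\[
\widetilde{G}(x)^{2}
= \int_{0}^{\infty}\Gampri[P_tf](x)\,|P_tf(x)|^{p-2}\,|P_tf(x)|^{2-p}\,dt
\;\le\;(f^{*}(x))^{2-p}\,A(x),
\]
where $A(x):=\int_{0}^{\infty}\Gampri[P_tf](x)\,|P_tf(x)|^{p-2}\,dt$. On $\{P_tf(x)=0\}$ the integrand is interpreted as zero, which is legitimate because $\chi(0,\cdot)=0$ forces $\Gampri[P_tf](x)=0$ there. Raising to the power $p/2$, integrating over $E$, and applying H\"older's inequality with the conjugate exponents $2/(2-p)$ and $2/p$ (the endpoint $p=2$ being a trivial degenerate case) would yield
\[
\normLp{\widetilde{G}}^{p}
\;\le\;\normLp{f^{*}}^{p(2-p)/2}\left(\int_{E} A\,dm\right)^{p/2}.
\]

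At this stage the two factors on the right-hand side are exactly the quantities that the tools from Section~\ref{sec:prelim} control. Lemma~\ref{lem:Stein_ineq} gives $\normLp{f^{*}}\lesssim\normLp{f}$, while the Hardy--Stein inequality~\eqref{neq:HS_12_bezSS}---which crucially requires no strong stability assumption, matching the hypotheses of the theorem---gives $\int_{E} A\,dm\lesssim\normLp{f}^{p}$. Substituting both bounds and verifying the arithmetic $\tfrac{p(2-p)}{2}+\tfrac{p\cdot p}{2}=p$ would close the estimate.

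I do not foresee a genuine obstacle: the only mildly delicate point is the care at $\{P_tf=0\}$ described above, together with confirming that~\eqref{neq:HS_12_bezSS} is applied in the form that does not invoke Assumption~\ref{ass:SS}. The same H\"older trick is the one that fails for $p>2$ (since then $2-p<0$ and $|P_tf|^{2-p}$ is no longer bounded by a function of $f^{*}$ alone), which is consistent with the counterexample~\eqref{neq:Gprim_unbound} announced in the introduction.
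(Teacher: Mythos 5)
Your argument is correct and is essentially identical to the paper's own proof: both insert the weight $|P_tf(x)|^{p-2}$, dominate the compensating factor by $(f^{*}(x))^{2-p}$ using $p\le 2$, apply H\"older with exponents $2/(2-p)$ and $2/p$, and conclude via Stein's maximal inequality together with the Hardy--Stein bound \eqref{neq:HS_12_bezSS} (which indeed needs no strong stability assumption). Your extra care on the set $\{P_tf=0\}$ is a valid refinement of the same computation, not a different route.
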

\begin{thm}
\label{thm:Gprim_less_2inf}
    Impose Assumption~\ref{ass:SS}.
    Let $2\le p<\infty$ and $f\in\LpE$. Then,
    \begin{align*}
        \normLp{f} \lesssim \normLp{\widetilde{G}} .
    \end{align*}
\end{thm}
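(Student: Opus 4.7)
The plan is to combine the Hardy--Stein identity (in the form available for $p\ge 2$ under strong stability) with H\"older's inequality and Stein's maximal inequality. I will write $f^*(x):=\sup_{t\ge 0}|P_tf(x)|$ for the maximal function of the semigroup. The underlying idea is that~\eqref{sim:HS_2pinf} already bounds $\normLp{f}^p$ from above by a space--time integral of $\Gampri[P_tf](x)\,|P_tf(x)|^{p-2}$; the task is then to separate the two factors, repackage $\Gampri[P_tf]$ as part of $\widetilde{G}^2$, and absorb the remaining factor $|P_tf|^{p-2}$ using $f^*$.

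Concretely, I would first invoke the lower bound in~\eqref{sim:HS_2pinf}, which under Assumption~\ref{ass:SS} gives
\[
    \normLp{f}^p
    \;\lesssim\;
    \intb_0^\inft \intE \Gampri[P_tf](x)\,|P_tf(x)|^{p-2}\,m(\dx)\dt.
\]
Since $p\ge 2$, one has $|P_tf(x)|^{p-2}\le f^*(x)^{p-2}$ pointwise and uniformly in $t\ge 0$. Pulling this factor out of the time integral and recognising that $\widetilde{G}(x)^2=\intb_0^\inft \Gampri[P_tf](x)\dts$, the right-hand side is at most $\intE f^*(x)^{p-2}\,\widetilde{G}(x)^2\,m(\dx)$. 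Next, H\"older's inequality with conjugate exponents $p/(p-2)$ and $p/2$ (with the convention $p/(p-2)=\inft$ when $p=2$) gives
\[
    \normLp{f}^p
    \;\lesssim\;
    \normLp{f^*}^{p-2}\,\normLp{\widetilde{G}}^2.
\]
Stein's maximal inequality~\eqref{neq:Stein} then replaces $\normLp{f^*}$ by $\normLp{f}$, and dividing by $\normLp{f}^{p-2}$ yields the desired $\normLp{f}\lesssim \normLp{\widetilde{G}}$.

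I do not anticipate any genuine obstacle: once~\eqref{sim:HS_2pinf} and~\eqref{neq:Stein} are admitted, the argument is entirely routine, and the case $p=2$ is in fact already covered (with equality up to a constant) by the proposition following~\eqref{eq:Gamma+-}. The only point requiring a brief remark is the final division, which is legitimate provided $0<\normLp{f}<\inft$; the degenerate cases $\normLp{f}=0$ and $\normLp{\widetilde{G}}=\inft$ make the inequality trivially true, so they can be disposed of at the outset.
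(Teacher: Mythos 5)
Your proposal is correct and follows essentially the same route as the paper's own proof: the $\lesssim$ direction of~\eqref{sim:HS_2pinf} under Assumption~\ref{ass:SS}, the pointwise bound $|P_tf|^{p-2}\le (f^*)^{p-2}$, H\"older with exponents $p/(p-2)$ and $p/2$, and Stein's maximal inequality~\eqref{neq:Stein}, followed by division by $\normLp{f}^{p-2}$. Your explicit remark on the degenerate cases $\normLp{f}=0$ and $\normLp{\widetilde{G}}=\inft$ is a small but welcome addition that the paper leaves implicit.
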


In the next two proofs, we repeat mainly the argument used in \cite[Proofs of Lemma~4.3 and Lemma~4.5]{bbl}.

\begin{proof}[Proof of Theorem~\ref{thm:Gprim_less_12}.]
    Denote $f^*(x):=\sup_{t\ge0}|P_tf(x)|$. Since $p\le2$,
    \begin{align*}
         (\widetilde{G}(x))^2
         & =
         \int\limits_0^\inft \Gampri[P_tf](x) \dt
         \\
         & =
         (f^*(x))^{2 - p} \int\limits_0^\inft \Gampri[P_tf](x) |f^*(x)|^{p - 2} \dts
         \\
         & \le
         (f^*(x))^{2 - p} \int\limits_0^\inft \Gampri[P_tf](x) |P_tf(x)|^{p - 2} \dts .
    \end{align*}
    By Hölder's inequality, inequality \eqref{neq:HS_12_bezSS}, and Stein's maximal theorem~\eqref{neq:Stein},
    \begin{align*}
         \intE (\widetilde{G}(x))^p\,m(\dx)
         & \le
         \intE (f^*(x))^{(2 - p) p / 2} \left(\int\limits_0^\inft \Gampri[P_tf](x) |P_tf(x)|^{p - 2} \dts\right)^{p / 2}\,m(\dx)
         \\
         & \le
         \left(\intE (f^*(x))^p\,m(\dx)\right)^{1 - p / 2} \left(\int\limits_0^\inft  \intE \Gampri[P_tf](x) |P_tf(x)|^{p - 2}\,m(\dx)\dt \right)^{p / 2} \\
         & \lesssim
         \left(\intE (f^*(x))^p\,m(\dx)\right)^{1 - p / 2} \left(\intE |f(x)|^p\,m(\dx) \right)^{p / 2} \\
         & \asymp
         \intE |f(x)|^p\,m(\dx) .
    \end{align*}
    Therefore,
    \begin{align*}
     \intE (\widetilde{G}(x))^{p}\,m(\dx) & \lesssim \intE |f(x)|^p\,m(\dx) .
    \end{align*}
\end{proof}

\begin{proof}[Proof of Theorem~\ref{thm:Gprim_less_12}.]
    Denote $f^*(x):=\sup_{t\ge0}|P_tf(x)|$.
    Due to the strong stability of the semigroup, estimate~\eqref{sim:HS_2pinf} is valid.
    By this, Hölder's inequality, and Stein's maximal theorem~\eqref{neq:Stein}, we obtain
    \begin{align*}
     \intE |f(x)|^p\,m(\dx) & \asymp \int\limits_0^\inft  \intE \Gampri[P_tf](x) |P_tf(x)|^{p - 2}\,m(\dx)\dt \\
     & \le \int\limits_0^\inft \intE \Gampri[P_tf](x) |f^*(x)|^{p - 2}\,m(\dx)\dt  \\
     & = \intE (\widetilde{G}(x))^2 |f^*(x)|^{p - 2}\,m(\dx) \\
     & \le \left(\intE (\widetilde{G}(x))^{p}\,m(\dx)\right)^{2/p} \left(\intE |f^*(x)|^p\,m(\dx)\right)^{1 - 2/p} \\
     & \asymp \left(\intE (\widetilde{G}(x))^{p}\,m(\dx)\right)^{2/p} \left(\intE |f(x)|^p\,m(\dx)\right)^{1 - 2/p} .
    \end{align*}
    This proves the desired inequality:
    \begin{align*}
     \intE |f(x)|^p\,m(\dx) & \lesssim \intE (\widetilde{G}(x))^{p}\,m(\dx) .
    \end{align*}
\end{proof}

When the Dirichlet form corresponds to the pure\nobreakdash-jump L\'evy process, then the square function $\widetilde{G}$ possesses the both-side $L^p$-estimates for the entire range of $1<p<\infty$, as shown in Section~4 of \cite{bbl}.
However, this does not hold in general, as demonstrated in Example~\ref{cex:Brown} below.
Before we move on to this example, we would like to make some comments on the conclusions derived from it.
\begin{rem}
\label{rem:GlessH}
    The result below shows that the statement of Theorem~3.4 in \cite{lw} is not valid. The problem lies in the proof of Lemma~3.3, where the inequality $G(x)\le \sqrt{2}H(x)$ is stated.
    It was incorrectly assumed that $P_t[P_tf(k(\cdot,y))](x)$ is equal to $P_{2t}f(k(x,y))$ for some function $k$; see assumption~(A4) therein.
    This statement may be true, in the case of $E=\Rn$, $k(x+y)=x+y$, and translationally invariant semigroup $\Pt$, as shown in the proof of Lemma~4.2 in \cite{bbl}.
    Nevertheless, the following calculations confirm that this error is irreparable in general.
\end{rem}

\begin{ex}[Brownian motion on interval with removed segment]
\label{cex:Brown}
    The present example was proposed by Mateusz Kwaśnicki. The goal is to demonstrate that, for $2<p<\infty$, there is generally no uniform constant $C_p$ satisfying
    $\normLp{\widetilde{G}} \le C_p \normLp{f}$ for all $f\in\LpE$. Consequently, the same conclusion applies to the function $G$.

    \emph{Idea.}
        We construct a discrete approximation of the reflected Brownian motion on the interval $[0, \tfrac{\pi}{4}] \cup [\tfrac{3\pi}{4}, \pi]$ with additional jumps between $\tfrac{\pi}{4}$ and $\tfrac{3\pi}{4}$. The intensity of the jumps are chosen so that the function $\cos x$ becomes an eigenfunction of the generator.

        In other words, this process is an approximation of the reflected Brownian motion in $[0, \pi]$, but with segments of paths lying on the part $(\tfrac{\pi}{4}, \tfrac{3\pi}{4})$ deleted.

        By the approximation, we mean the following process:
        a Markov chain with continuous time
        which can be viewed as a reflected symmetric nearest-neighbor random walk $\Xt$ on $\{1, 2, \ldots, 4 n\}$, with parts of paths corresponding to $\{(t,X_t) : X_t \in \{n + 1, n + 2, \ldots, 3 n\}\}$ removed.

        To work with the standard notation for the generator matrix of a Markov chain, we reindex the state space:
        we shift the states $3 n + 1, 3 n + 2, \ldots, 4 n$ to the left and call them $n + 1, n + 2, \ldots, 2 n$.
        Therefore, $E = \{1, 2, \ldots, 2n\}$. The space $E$ is equipped with the counting measure $m$.

    \emph{Generator.}
        The generator of the process $\Xt$ is expressed as the following $2n \times 2n$ matrix $A=[a_{i,j}]_{i,j=1}^{2n}$:
        
        \begin{align*}
        A = \left(
        \setlength{\arraycolsep}{2pt}
        \begin{array}{*{14}{c}}
            -1 & 1 & 0 & 0 & \cdots & 0 & 0 & 0 & 0 & \cdots & 0 & 0 & 0 & 0 \\
            1 & -2 & 1 & 0 & \cdots & 0 & 0 & 0 & 0 & \cdots & 0 & 0 & 0 & 0 \\
            0 & 1 & -2 & 1 & \cdots & 0 & 0 & 0 & 0 & \cdots & 0 & 0 & 0 & 0 \\
            0 & 0 & 1 & -2 & \cdots & 0 & 0 & 0 & 0 & \cdots & 0 & 0 & 0 & 0 \\
            \vdots & \vdots & \vdots & \vdots & \ddots & \vdots & \vdots & \vdots & \vdots & \ddots & \vdots & \vdots & \vdots & \vdots \\
            0 & 0 & 0 & 0 & \cdots & -2 & 1 & 0 & 0 & \cdots & 0 & 0 & 0 & 0 \\
            0 & 0 & 0 & 0 & \cdots & 1 & \alpha^*_n & \alpha_n & 0 & \cdots & 0 & 0 & 0 & 0 \\
            0 & 0 & 0 & 0 & \cdots & 0 & \alpha_n & \alpha^*_n & 1 & \cdots & 0 & 0 & 0 & 0 \\
            0 & 0 & 0 & 0 & \cdots & 0 & 0 & 1 & -2 & \cdots & 0 & 0 & 0 & 0 \\
            \vdots & \vdots & \vdots & \vdots & \ddots & \vdots & \vdots & \vdots & \vdots & \ddots & \vdots & \vdots & \vdots & \vdots \\
            0 & 0 & 0 & 0 & \cdots & 0 & 0 & 0 & 0 & \cdots & -2 & 1 & 0 & 0 \\
            0 & 0 & 0 & 0 & \cdots & 0 & 0 & 0 & 0 & \cdots & 1 & -2 & 1 & 0 \\
            0 & 0 & 0 & 0 & \cdots & 0 & 0 & 0 & 0 & \cdots & 0 & 1 & -2 & 1 \\
            0 & 0 & 0 & 0 & \cdots & 0 & 0 & 0 & 0 & \cdots & 0 & 0 & 1 & -1
        \end{array}\right), 
        \end{align*}
        where
        \begin{align*}
            \alpha_n & := \frac{1}{1 + \cot \tfrac{\pi}{8 n}}
        \end{align*}
        and
        $\alpha^*_n:=-1-\alpha_n$.
        In other words, the jumping kernel is given by
        $J(x,\dy) = J(x,y) m(\dy)$ and
        $J(i,j) := a_{i,j}$ for $i\neq j$, where
        \begin{align*}
            a_{i,j}
            =
            \begin{cases}
                1 & \text{when } |i-j|=1 \text{ and } \{i,j\}\neq\{n,n+1\}, \\
                \alpha_n & \text{when } \{i,j\}=\{n,n+1\}, \\
                0 & \text{when } |i-j|>1 .
            \end{cases}
        \end{align*}
        The jumping density $J(x,y)$ is illustrated in Figure \ref{fig:jumps} below.

    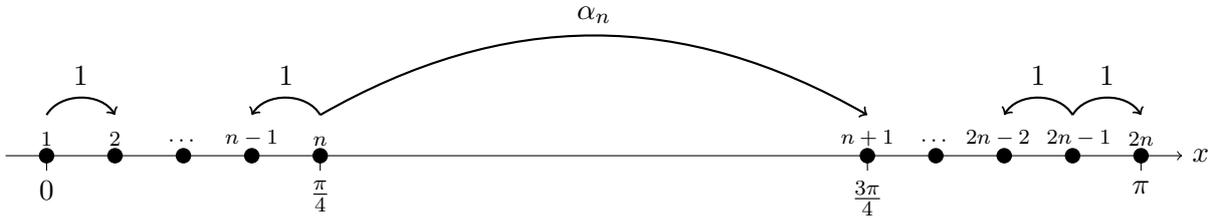
\begin{figure}[h]
    \centering
        \begin{tikzpicture}[scale=1.8]
        \small
            \draw[->] (-0.3,0) -- (8.3,0) node[right] {$x$};
        
            \draw (0,0) -- (0,-0.11) node[below] {$0$};
            \draw (2,0) -- (2,-0.11) node[below] {$\frac{\pi}{4}$};
            \draw (6,0) -- (6,-0.11) node[below] {$\frac{3\pi}{4}$};
            \draw (8,0) -- (8,-0.11) node[below] {$\pi$};
        
            \filldraw (0,0) circle (1.5pt) node[above] {\tiny $1$};
            \filldraw (0.5,0) circle (1.5pt) node[above] {\tiny $2$};
            \filldraw (1,0) circle (1.5pt) node[above] {\tiny $\cdots$};
            \filldraw (1.5,0) circle (1.5pt) node[above] {\tiny $n-1$};
            \filldraw (2,0) circle (1.5pt) node[above] {\tiny $n$};
        
            \filldraw (6,0) circle (1.5pt) node[above] {\tiny $n+1$};
            \filldraw (6.5,0) circle (1.5pt) node[above] {\tiny $\cdots$};
            \filldraw (7,0) circle (1.5pt) node[above] {\tiny $2n-2$\phantom{!!}};
            \filldraw (7.5,0) circle (1.5pt) node[above] {\tiny \phantom{!!}$2n-1$};
            \filldraw (8,0) circle (1.5pt) node[above] {\tiny $2n$};
        
            \draw[->, thick] (0,0.3) to[out=60,in=120] (0.5,0.3);
            \node[above] at (0.25,0.45) {$1$};
            
            \draw[->, thick] (2,0.3) to[out=30,in=150] (6,0.3);
            \node[above] at (4,0.9) {$\alpha_n$};

            \draw[->, thick] (2,0.3) to[out=120,in=60] (1.5,0.3);
            \node[above] at (1.75,0.45) {$1$};

            \draw[->, thick] (7.5,0.3) to[out=120,in=60] (7,0.3);
            \node[above] at (7.25,0.45) {$1$};

            \draw[->, thick] (7.5,0.3) to[out=60,in=120] (8,0.3);
            \node[above] at (7.75,0.45) {$1$};
            \normalsize
        \end{tikzpicture}
    \caption{The intensity of jumps of the process $\Xt$.}
    \label{fig:jumps}
    \end{figure}

    \emph{Eigenfunction.}
    We consider the following function on $E$:
    \begin{align*}
        f(k) & := \begin{cases}
          \cos \tfrac{(2 k - 1) \pi}{8 n} & \text{if } k \le n , \\
          \cos \tfrac{(2 k - 1 + 4 n) \pi}{8 n} & \text{if } k > n .
         \end{cases}
    \end{align*}

    Now, we will show that $f$ is an eigenfunction of $A$, namely
    \begin{align*}
        A f(k) = -\lambda_n f(k),
    \end{align*}
    where
    \begin{align*}
        \lambda_n := 2 (1 - \cos \tfrac{\pi}{4 n}) = 4 \sin^2 \tfrac{\pi}{8 n} .
    \end{align*}
    Observe that $f(2 n + 1 - k) = -f(k)$ and $A f(2 n + 1 - k) = -A f(k)$, hence it is enough to consider $k \le n$.
    By the sum-to-product formula, for $k = 2, 3, \ldots, n - 1$, we get
    \begin{align*}
        A f(k) & = f(k + 1) + f(k - 1) - 2 f(k) \\
        & = \cos \tfrac{(2 k + 1) \pi}{8 n} + \cos \tfrac{(2 k - 3) \pi}{8 n} - 2 \cos \tfrac{(2 k - 1) \pi}{8 n} \\
        & = 2 \cos \tfrac{(2 k - 1) \pi}{8 n} \cos \tfrac{\pi}{4 n} - 2 \cos \tfrac{(2 k - 1) \pi}{8 n} = -\lambda_n f(k) .
    \end{align*}
    Similarly, for $k = 1$
    \begin{align*}
        A f(1) & = f(2) - f(1) \\
        & = \cos \tfrac{3 \pi}{8 n} - \cos \tfrac{\pi}{8 n} \\
        & = \cos \tfrac{3 \pi}{8 n} + \cos \tfrac{-\pi}{8 n} - 2 \cos \tfrac{\pi}{8 n}
        \\
        &=
        2\cos\tfrac{\pi}{8n}\cos\tfrac{\pi}{4n}
        -
        2 \cos \tfrac{\pi}{8 n}.
    \end{align*}
    Thus, again $A f(1) = -\lambda_n f(1)$. Finally, when $k = n$, we have
    \begin{align*}
        A f(n) & = \alpha_n (f(n + 1) - f(n)) + (f(n - 1) - f(n)) \\
        & = \alpha_n (\cos \tfrac{(6 n + 1) \pi}{8 n} - \cos \tfrac{(2 n - 1) \pi}{8 n}) + \cos \tfrac{(2 n - 3) \pi}{8 n} - \cos \tfrac{(2 n - 1) \pi}{8 n} \\
        & = -2 \alpha_n \cos \tfrac{(2 n - 1) \pi}{8 n} + 2 \sin \tfrac{(n - 1) \pi}{4 n} \sin \tfrac{\pi}{8 n} \\ 
        & = 2 \left(-\alpha_n + \frac{\sin \tfrac{(n - 1) \pi}{4 n} \sin \tfrac{\pi}{8 n}}{\cos \tfrac{(2 n - 1) \pi}{8 n}}\right) \cos \tfrac{(2 n - 1) \pi}{8 n} .
    \end{align*}
    Note that the parameter $\alpha_n$ is chosen so that the right-hand side is equal to $-\lambda_n f(n)$. Indeed,
    \begin{align*}
        & \left(-\alpha_n + \frac{\sin \tfrac{(n - 1) \pi}{4 n} \sin \tfrac{\pi}{8 n}}{\cos \tfrac{(2 n - 1) \pi}{8 n}} \right) \\
        & \qquad = \frac{-\sin \tfrac{\pi}{8 n}}{\sin \tfrac{\pi}{8 n} + \cos \tfrac{\pi}{8 n}} + \frac{\tfrac{\sqrt{2}}{2} (\cos \tfrac{\pi}{4 n} - \sin \tfrac{\pi}{4 n}) \sin \tfrac{\pi}{8 n}}{\tfrac{\sqrt{2}}{2} (\cos \tfrac{\pi}{8 n} + \sin \tfrac{\pi}{8 n})} \\
        & \qquad = -\frac{(1 - \cos \tfrac{\pi}{4 n} + \sin \tfrac{\pi}{4 n}) \sin \tfrac{\pi}{8 n}}{\cos \tfrac{\pi}{8 n} + \sin \tfrac{\pi}{8 n}} \\
        & \qquad = -\frac{(2 \sin^2 \tfrac{\pi}{8 n} + 2 \sin \tfrac{\pi}{8 n} \cos \tfrac{\pi}{8 n}) \sin \tfrac{\pi}{8 n}}{\cos \tfrac{\pi}{8 n} + \sin \tfrac{\pi}{8 n}} \\
        & \qquad = -2 \sin^2 \tfrac{\pi}{8 n} = -\frac{\lambda_n}{2} \, .
    \end{align*}
        
    \emph{Square functions.}
    According to the earlier part, since $f$ is an eigenfunction of the generator $A$,
    the semigroup operator $P_t$ acting on $f$ is given by
    $P_t f(k) = e^{-\lambda_n t} f(k)$.
    Recall that the square function $\widetilde{G}$ for the function $f$ is given by
    \begin{align*}
        (\widetilde{G}(x))^2 = \int\limits_0^\infty \intEx (P_t f(y) - P_t f(x))^2 \chi(P_tf(x),P_tf(y))
        \,J(x, y) m(\dy) \dt.
    \end{align*}
    Since $\int_0^\inft e^{-\lambda_n t} \dts= 1/\lambda_n$ and $m$ is the counting measure, we may write
    \begin{align*}
        (\widetilde{G}(k))^2 & = \frac{1}{\lambda_n} \sum_{l \ne k} (f(l) - f(k))^2 \chi(f(k),f(l)) J(k, l) .
    \end{align*}
    Moreover, $\widetilde{G}(2 n + 1 - k) = \widetilde{G}(k)$.
    When $k = 1, 2, \ldots, n - 1$, we have $0<f(k+1)<f(k)$ and
    \begin{align*}
        (\widetilde{G}(k))^2 & = \frac{1}{\lambda_n} (f(k + 1) - f(k))^2 = \frac{4 (\sin \tfrac{k \pi}{4 n} \sin \tfrac{\pi}{8 n})^2}{4 \sin^2 \tfrac{\pi}{8 n}} = \sin^2 \tfrac{k \pi}{4 n} .
    \end{align*}
    Also, if $k = n$, then $|f(n)|=|f(n+1)|=-f(n)$, hence
    \begin{align*}
        (\widetilde{G}(n))^2 & = \frac{\alpha_n}{2 \lambda_n}  (f(n + 1) - f(n))^2 \\
        & =
        \frac{(2 \cos \tfrac{(2 n - 1) \pi}{8 n})^2}{8 \sin^2 \tfrac{\pi}{8 n} (1 + \cot \tfrac{\pi}{8 n})} \\
        & = \frac{(\sqrt{2} (\cos \tfrac{\pi}{8 n} + \sin \tfrac{\pi}{8 n}))^2}{8 \sin \tfrac{\pi}{8 n} (\sin \tfrac{\pi}{8 n} + \cos \tfrac{\pi}{8 n})} \\
        & = \frac{\sin \tfrac{\pi}{8 n} + \cos \tfrac{\pi}{8 n}}{4 \sin \tfrac{\pi}{8 n}} = \frac{1 + \cot \tfrac{\pi}{8 n}}{4} \, .
    \end{align*}
    Now, the $p$\nobreakdash-norm of $\widetilde{G}$ is given by
    \begin{align}
    \label{eq:cex_mateusz}
        \normLp{\widetilde{G}}^p & = 2 \sum_{k = 1}^{n - 1} \sin^p \tfrac{k \pi}{4 n} + 2 (\widetilde{G}(n))^p .
    \end{align}
    The first term may be written as
    \begin{align*}
        2 \sum_{k = 1}^{n - 1} \sin^p \tfrac{k \pi}{4 n}
        =
        2 n \sum_{k = 1}^{n} \frac{1}{n}\sin^p \tfrac{k \pi}{4 n} - 2\left(\frac{\sqrt{2}}{2}\right)^p.
    \end{align*}
    Note, that the sum on the right\nobreakdash-hand side is the Riemann sum of the function $2\sin^p \tfrac{\pi x}{4}$ over $[0,1]$.
    Therefore, the first term on the right\nobreakdash-hand side of \eqref{eq:cex_mateusz} is asymptotically equal to $c_1 n$, where $c_1 := 2 \int_0^1 \sin^p \tfrac{\pi x}{4} \dxs > 0$.

    The second term in \eqref{eq:cex_mateusz} is asymptotically equal to $2 (\tfrac{1}{4} \cdot \tfrac{8 n}{\pi})^{p/2}=2(\tfrac{2n}{\pi})^{p/2}$.
    Hence, for $p > 2$,
    \begin{align*}
         \normLp{\widetilde{G}}^p & \sim \frac{2^{1 + p/2}}{\pi^{p/2}} \, n^{p/2}.
    \end{align*}
    Particularly, we see that $\widetilde{G}(n)$ is the dominating term in the $p$\nobreakdash-norm of $\widetilde{G}$.
    On the other hand,
    \begin{align*}
         \normLp{f}^p & = 2 \sum_{k = 1}^n \cos^p \tfrac{k \pi}{4 n} \sim c_2 n ,
    \end{align*}
    where $c_2 := 2 \int_0^1 (\cos \tfrac{\pi x}{4})^p \dxs > 0$. Finally,
    \begin{align*}
        \frac{\normLp{\widetilde{G}}}{\normLp{f}} & \sim \frac{2^{1/p + 1/2}}{\pi^{1/2} c_2^{1/p}} \, n^{1/2 - 1/p} .
    \end{align*}
    This proves that for $2<p<\infty$ there is no universal constant $C_p>0$ such that
    \begin{align*}
        \normLp{\widetilde{G}} & \le C_p \normLp{f} .
    \end{align*}
    What is more, since $\widetilde{G} \le \sqrt{2} G$,  the above statement holds also for the square function $G$.
\end{ex}

\subsection{Square function $H$}
\label{sub:H_func}

The next square function we will analyze is the following:
\begin{align*}
    H(x) & :=
    \left(
        \int\limits_0^\inft P_t \Gamma [P_tf](x) \dts 
    \right)^{1/2}
    .
\end{align*}

The function $H$ overcomes the problems with $\widetilde{G}$ demonstrated for $2<p<\inft$ in Example~\ref{cex:Brown}.
Indeed, the following $L^p$-estimates hold.
\begin{thm}
\label{thm:H_less_2inf}
    Impose Assumption~\ref{ass:coserv}. Let $2\le p<\infty$ and $f\in\LpE$. Then,
    \begin{align*}
        \normLp{H} \lesssim \normLp{f}.
    \end{align*}
\end{thm}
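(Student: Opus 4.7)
The strategy is to combine the Burkholder--Davies--Gundy inequality for the parabolic martingale $\Mt$ of~\eqref{eq:Mt} with the identification of its sharp bracket from Corollary~\ref{cor:sharpMt}, and then to pass from a trajectory-wise bound to a pointwise bound on $H$ via self-duality and Jensen's inequality. First I would fix $T > 0$ and, in order to have all expectations finite and to apply Corollary~\ref{cor:sharpMt}, restrict to $f$ in a dense subclass such as $\DE \cap \LE{\infty} \cap \LpE$. Setting
\[
    H_T^2(x) := \int_0^T P_t \Gamma[P_tf](x)\dts = \Ev_x\left(\int_0^T \Gamma[P_tf](X_t)\dts\right),
\]
by monotone convergence ($H_T \nearrow H$) it suffices to prove $\normLp{H_T} \lesssim \normLp{f}$ uniformly in $T$.

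Since $p/2 \ge 1$, Jensen's inequality applied to the inner expectation gives
\[
    H_T^p(x) \le \Ev_x\!\left(\int_0^T \Gamma[P_tf](X_t)\dts\right)^{p/2}.
\]
Next I would integrate against $m$ and apply the self-duality identity~\eqref{eq:self-dual} to the non-negative path functional $\Psi(\omega) := \bigl(\int_0^T \Gamma[P_tf](\omega_t)\dts\bigr)^{p/2}$; Assumption~\ref{ass:coserv} allows us to drop $\ind_{\{T<\zeta\}}$. After the change of variable $s = T-t$, and since $X_{s-}=X_s$ except on a countable set (so they agree under the Lebesgue integral), we obtain
\[
    \normLp{H_T}^p \le \intE \Ev_x\!\left(\int_0^T \Gamma[P_{T-s}f](X_s)\dss\right)^{p/2} m(\dx) = 2^{-p/2}\intE \Ev_x\langle M\rangle_T^{p/2}\,m(\dx),
\]
where the last equality is Corollary~\ref{cor:sharpMt}.

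For the final step, I would combine~\eqref{eq:brackets-} (valid for $p \ge 2$) with the BDG inequality~\eqref{sim:bdg} to estimate
\[
    \Ev_x\langle M\rangle_T^{p/2} \lesssim \Ev_x[M]_T^{p/2} \asymp \Ev_x\abs{M_T}^p \lesssim P_T(\abs{f}^p)(x) + \abs{P_Tf(x)}^p.
\]
Integrating over $E$ and invoking conservativeness together with the symmetry of $\Pt$ (so that $\intE P_T(\abs{f}^p)\dms = \intE\abs{f}^p\dms$) and $\LpE$-contractivity of $\Pt$, the right-hand side is bounded by $\normLp{f}^p$. Combining yields $\normLp{H_T} \lesssim \normLp{f}$ uniformly in $T$. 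Letting $T \to \infty$, and extending to all $f \in \LpE$ via the sublinearity $H(f+g) \le H(f) + H(g)$ (Minkowski applied to the mixed $L^2$-norm underlying $H$) together with density of $\DE \cap \LE{\infty} \cap \LpE$ in $\LpE$, would complete the proof.

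The main obstacle is the middle step: applying self-duality to a non-linear path functional and reconciling it with $\langle M\rangle_T$. Both conservativeness (to discard the lifetime indicator in~\eqref{eq:self-dual}) and right-continuity of sample paths (to replace $X_{s-}$ by $X_s$ after the time reversal) are essential here. Secondary concerns are ensuring $\Ev_x\abs{M_T}^p<\infty$, which motivates the boundedness restriction on $f$, and the final density argument, which is routine once the sublinearity of $H$ is in hand.
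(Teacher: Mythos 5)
Your argument follows essentially the same route as the paper's proof: Jensen's inequality for $p/2\ge 1$, the self-duality identity with conservativeness used to discard the lifetime indicator, the identification $\int_0^T\Gamma[P_{T-s}f](X_s)\,\dd s=\tfrac12\langle M\rangle_T$ from Corollary~\ref{cor:sharpMt}, and then \eqref{eq:brackets-} combined with the Burkholder--Davies--Gundy inequality~\eqref{sim:bdg}; all of these steps are correct and match the paper. The one place where you genuinely deviate -- and where the argument as written does not close -- is the final extension from the dense class to all of $\LpE$. Sublinearity gives $H[f]\le H[f_n]+H[f-f_n]$, but $f-f_n$ is a general element of $\LpE$ and need not belong to $\DE\cap\LE{\infty}$, so the estimate $\normLp{H[f-f_n]}\lesssim\normLp{f-f_n}$ is exactly the statement being proved and is not yet available for it; the density argument is therefore circular. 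The paper avoids this by choosing compactly supported $f_n\in\DE$ converging almost everywhere to $f$ and applying Fatou's lemma to $\intb_E H[f_n]^p\dms$, which uses the bound only for the approximants. Replacing your last step by this Fatou-type argument (or any lower-semicontinuity argument for $f\mapsto H[f]$ along an a.e.-convergent sequence) repairs the proof.
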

\begin{thm}
\label{thm:H_great_12}
    Impose Assumptions~\ref{ass:coserv} and \ref{ass:SS}.
    Let $1<p\le 2$ and $f\in\LpE\cap\LE{2}$. Then,
    \begin{align*}
        \normLp{f} \lesssim \normLp{H}.
    \end{align*}
\end{thm}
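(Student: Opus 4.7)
The plan is to adapt the parabolic-martingale argument of \cite{bbl}, substituting the sharp-bracket identity of Corollary~\ref{cor:sharpMt} for It\^o's formula and invoking the self-duality of $\Xt$ in place of the translation-invariant time-reversal used in the L\'evy setting. Fix $T>0$ and let $\Mt$ be the parabolic martingale from \eqref{eq:Mt}. Writing $M_T=f(X_T)-P_Tf(x)$ under $\Px$ yields $|f(X_T)|^p\lesssim |M_T|^p+|P_Tf(x)|^p$; integrating against $m$, invoking conservativeness in the form $\intE P_T|f|^p\dms=\normLp{f}^p$, and sending $T\to\infty$ using Assumption~\ref{ass:SS}, the claim reduces to the uniform-in-$T$ estimate
\begin{align*}
    \intE\Ev_x|M_T|^p\dms\lesssim\normLp{H}^p.
\end{align*}

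For $1<p\le 2$, the Burkholder--Davies--Gundy inequality \eqref{sim:bdg} combined with the one-sided bracket comparison \eqref{eq:brackets+} gives $\Ev_x|M_T|^p\lesssim\Ev_x\langle M\rangle_T^{p/2}$, and Corollary~\ref{cor:sharpMt} provides $\langle M\rangle_T=\int_0^T 2\Gamma[P_{T-s}f](X_s)\dss$. The key move is to interchange $P_{T-s}f$ with $P_sf$ via self-duality: apply \eqref{eq:self-dual} to the non-negative path-functional
\begin{align*}
    \Phi\bigl((\omega_t)_{t\in[0,T]}\bigr):=\left(\int_0^T 2\Gamma[P_{T-t}f](\omega_t)\dts\right)^{p/2}
\end{align*}
(with the indicator $\ind_{\{T<\zeta\}}$ removed thanks to conservativeness), and use that $X$ has only countably many jump times to pass from $X_{s-}$ to $X_s$ inside the integral. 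This yields
\begin{align*}
    \intE\Ev_x\langle M\rangle_T^{p/2}\dms
    =
    \intE\Ev_x\left(\int_0^T 2\Gamma[P_s f](X_s)\dss\right)^{p/2}\dms.
\end{align*}
Since $p/2\le 1$, Jensen's inequality for the concave function $z\mapsto z^{p/2}$ under $\Px$, followed by Fubini, gives
\begin{align*}
    \Ev_x\left(\int_0^T 2\Gamma[P_s f](X_s)\dss\right)^{p/2}
    \le
    \left(\int_0^T 2P_s\Gamma[P_s f](x)\dss\right)^{p/2}
    =
    2^{p/2}H_T^p(x),
\end{align*}
where $H_T^2(x):=\int_0^T P_s\Gamma[P_s f](x)\dss\le H^2(x)$. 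Integrating against $m$ chains these estimates into the required uniform bound.

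The main technical obstacle is that Corollary~\ref{cor:sharpMt} is stated only for $f\in\DE$, while the theorem's hypothesis is only $f\in\LpE\cap\LE{2}$. This is handled by an approximation step: apply the argument to $f_\eps:=P_\eps f\in\D(A_2)\subset\DE$ (using analyticity of $\Pt$ on $\LE{2}$) to obtain $\normLp{P_\eps f}\lesssim\normLp{H_{f_\eps}}$. The left-hand side tends to $\normLp{f}$ by strong continuity of $\Pt$ on $\LpE$, while on the right the representation $H_{f_\eps}^2(x)=\int_\eps^\infty P_{s-\eps}\Gamma[P_s f](x)\dss$ together with a monotone/dominated convergence argument in $\eps\to 0^+$ (analogous to the continuity statement in Lemma~\ref{lem:difPtPTmtf2}) lets us pass $\normLp{H_{f_\eps}}$ to $\normLp{H}$.
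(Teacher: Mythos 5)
Your main argument is essentially the paper's own proof run in the opposite direction: the paper starts from $\normLp{H}^p$ and works down to $\Ev_x|M_T|^p$ via Jensen, self-duality, Corollary~\ref{cor:sharpMt}, and BDG with \eqref{eq:brackets+}, whereas you start from $\Ev_x|M_T|^p$ and work up. All the ingredients and the direction of each inequality check out, including the reduction via conservativeness and strong stability and the use of $f_\eps=P_\eps f\in\D(A_2)\subseteq\DE$ to legitimize Corollary~\ref{cor:sharpMt}.

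The gap is in the last step of the approximation. You assert that $\normLp{H_{f_\eps}}$ can be passed to $\normLp{H}$ by ``a monotone/dominated convergence argument'' applied to $H_{f_\eps}^2(x)=\int_\eps^\infty P_{s-\eps}\Gamma[P_sf](x)\dss$. Neither tool applies: the integrand $P_{s-\eps}\Gamma[P_sf](x)$ is not monotone in $\eps$ (replacing $P_s$ by $P_{s-\eps}$ does not move the function in a fixed direction pointwise), and no dominating function uniform in $\eps$ is available, so you cannot conclude $\normLp{H_{f_\eps}}\to\normLp{H}$ this way — and $L^1$-continuity of $t\mapsto\Gamma[P_tf]$ as in Lemma~\ref{lem:difPtPTmtf2} only controls the integrand, not the $L^p$-norm of the square root of the integral. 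Fortunately you do not need convergence, only the one-sided bound $\normLp{H_{f_\eps}}\le\normLp{H}$, and this is exactly what the paper's trick delivers: observe that
\begin{align*}
P_\eps\bigl(H_{f_\eps}^2\bigr)(x)=\int\limits_\eps^\inft P_s\Gamma[P_sf](x)\dss\le (H(x))^2 ,
\end{align*}
and then, using conservativeness and symmetry of $P_\eps$ together with Jensen's inequality for the concave power $p/2\le1$,
\begin{align*}
\intE H_{f_\eps}^p\dms=\intE P_\eps\bigl(H_{f_\eps}^p\bigr)\dms\le\intE\bigl(P_\eps\bigl(H_{f_\eps}^2\bigr)\bigr)^{p/2}\dms\le\intE H^p\dms .
\end{align*}
Combining this with $\normLp{P_\eps f}\lesssim\normLp{H_{f_\eps}}$ and letting $\eps\to0^+$ closes the proof. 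With this replacement your argument is complete and equivalent to the paper's.
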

\begin{prop}
\label{thm:H_great_3inf}
    Impose Assumption~\ref{ass:SS}.
    Let $3\le p<\infty$ and $f\in\LpE$. Then,
    \begin{align*}
        \normLp{f} \lesssim \normLp{H}.
    \end{align*}
\end{prop}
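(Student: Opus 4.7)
The plan is to imitate the strategy of Theorem~\ref{thm:Gprim_less_2inf}, but with the key extra ingredient of Jensen's inequality, which will let me transfer the semigroup $P_t$ from the factor $|P_tf|^{p-2}$ onto $\Gamma[P_tf]$, producing $H^2$ in place of $\widetilde{G}^2$.

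First I start from the Hardy--Stein identity in its strong-stability form \eqref{sim:HS_2pinf}, valid for $2\le p<\infty$:
\begin{align*}
\|f\|_p^p \;\asymp\; \int_0^\infty\!\!\int_E \Gamma[P_tf](x)\,|P_tf(x)|^{p-2}\,m(dx)\,dt.
\end{align*}
Since $p\ge 3$, the function $s\mapsto |s|^{p-2}$ is convex on $\R$ and vanishes at zero. Extending $f$ by $0$ at the cemetery, the sub-Markov semigroup may be represented as a genuine probabilistic expectation $P_tf(x)=\Ev_x \tilde f(X_t)$, so the classical Jensen inequality gives the pointwise bound
\begin{align*}
|P_tf(x)|^{p-2} \le P_t\bigl(|f|^{p-2}\bigr)(x), \qquad x\in E,\; t>0,
\end{align*}
\emph{without} requiring $P_t1=1$. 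Combining this with the symmetry of $P_t$ (applied to the non-negative functions $\Gamma[P_tf]\in L^1(m)$ and $|f|^{p-2}\in L^{p/(p-2)}(m)$, justified by truncation) yields
\begin{align*}
\int_E \Gamma[P_tf]\cdot|P_tf|^{p-2}\,dm \;\le\; \int_E \Gamma[P_tf]\cdot P_t(|f|^{p-2})\,dm \;=\; \int_E P_t\Gamma[P_tf]\cdot|f|^{p-2}\,dm.
\end{align*}

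Integrating in $t$ and applying Fubini's theorem recovers the definition of $H$:
\begin{align*}
\|f\|_p^p \;\lesssim\; \int_E \left(\int_0^\infty P_t\Gamma[P_tf](x)\,dt\right) |f(x)|^{p-2}\,m(dx) \;=\; \int_E H(x)^2\,|f(x)|^{p-2}\,m(dx).
\end{align*}
Then H\"older's inequality with conjugate exponents $p/2$ and $p/(p-2)$ gives
\begin{align*}
\int_E H^2|f|^{p-2}\,dm \;\le\; \|H\|_p^{2}\,\|f\|_p^{p-2},
\end{align*}
and rearrangement yields $\|f\|_p^2\lesssim \|H\|_p^{2}$, as required. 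The restriction $p\ge 3$ enters only to guarantee convexity of $s\mapsto|s|^{p-2}$; for $1<p<3$ Jensen's inequality reverses direction, which is precisely why the corresponding range is handled separately in Theorem~\ref{thm:H_great_12} under the additional conservativeness assumption. The only substantive subtlety—not a real obstacle—is the passage from the symmetry of $P_t$ on $L^2(E,m)$ to the identity above at the level of non-negative functions, which is standard via monotone approximation, and the verification that Jensen applies for sub-Markov semigroups, which is handled by the cemetery extension.
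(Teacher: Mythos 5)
Your proposal is correct and follows essentially the same route as the paper: the paper deduces this proposition from the analogous lower bound for $\widetilde{H}$ (Proposition~\ref{prop:Hprim_great_3inf}) via $\widetilde{H}\le\sqrt{2}\,H$, and that bound is proved by exactly your chain of steps -- the Hardy--Stein estimate \eqref{sim:HS_2pinf}, Jensen's inequality $|P_tf|^{p-2}\le P_t(|f|^{p-2})$ using convexity of $s\mapsto|s|^{p-2}$ for $p\ge 3$, symmetry of $P_t$, and H\"older with exponents $p/2$ and $p/(p-2)$. Running the argument directly with $\Gamma$ and $H$ rather than $\Gampri$ and $\widetilde{H}$ is an immaterial variation, licensed by the two-sided equivalence in \eqref{sim:HS_2pinf}.
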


The last statement is a straightforward consequence of Proposition~\ref{prop:Hprim_great_3inf} below, due to $\widetilde{H} \le \sqrt{2} H$.
The rest of the estimates can be shown employing the martingale $\Mt$ introduced in Subsection~\ref{sub:martingale} and the Burkholder--Davies--Gundy inequality; see the proofs at the end of the current subsection.
However, the square function $H$ shares the same issue as $G$, namely, since it is defined by the operator $\Gamma$ instead of $\Gampri$, it is too large when $1<p<2$. To demonstrate this, we adopt Example~2 of \cite{bbl}, see Example~\ref{cex:Cauchy2} below.
Moreover, the lower bound of $\normLp{H}$ is still an open question for the range $2<p<3$.

\begin{ex}
\label{cex:Cauchy2}
    Consider the Dirichlet form on the Euclidean space $E = \Rn$ ($n\ge2$) associated with the Cauchy process ($\alpha$-stable process with $\alpha=1$), that is, $J(x, \dy) = J(x,y)\dy$, where
    \begin{align*}
        J(x, y) = \frac{\mathcal{A}_{n,1}}{|y - x|^{n + 1}}.
    \end{align*}
    Here
    $\mathcal{A}_{n,\alpha} := 2^\alpha \Gamma((n+\alpha)/2) \pi^{-n/2}/\abs{\Gamma(-\alpha/2)}$,
    where $\alpha\in(0,2)$;
    see \cite[(2.11)]{bbl}.
    In this context, $\Gamma(\cdot)$ stands for the Gamma function.
    The semigroup is given by the following transition density:
    \begin{align*}
        p_t(x,y) = \frac{c_n t}{(t^2+|y-x|^2)^{(n+1)/2}}
    \end{align*}
    and $c_n = \Gamma((n+1)/2) \pi^{-(n+1)/2}$.

    Denote the ball of radius $r$ centered at a point $x$ by $B(x,r)$.
    Let $h(x):=\abs{x}^{-(n+1)/2}$ and $f(x):=h(x)\ind_{B(0,1)}(x)$.
    Note that $f$ belongs to $L^p(\dx)$ for $1<p<2n/(n+1)<2$.
    We claim that the square function $H$ of the function $f$ is identically equal to infinity.
    The problem fundamentally arises from the rapid growth of $\Gamma [P_tf]$ as $t$ tends to zero.

    To illustrate this, we introduce the following function:
    \begin{align*}
        v(x,t) := \begin{cases}
		P_th(x), & \text{for } t>0,\\
		h(x), & \text{for } t=0.
	\end{cases}
    \end{align*}
    Note that $h$ is locally integrable on $\Rn$ and vanishes at infinity, hence the function $v$ is well\nobreakdash-defined and continuous except at the point $(x,t)=(0,0)$.

    We first calculate $P_tf(x)$. Observe that $p_t(x,y)$ satisfies the following scaling property
    \begin{align*}
        p_t(x,y) = \frac{1}{t^n} p_1\!\left( \tfrac{x}{t}, \tfrac{y}{t} \right).
    \end{align*}
    Obviously $h(x) = t^{-(n+1)/2}h(x/t)$.
    Therefore,
    \begin{align*}
        P_tf(x) &=
        c_n \int\limits_{\mathclap{B(0,1)}} \frac{1}{t^{(n+1)/2}}\, h\!\left(\tfrac{y}{t}\right)\cdot \frac{1}{t^n}\,p_1\!\left( \tfrac{x}{t}, \tfrac{y}{t} \right)\dys
        \\
        &=
        c_n \int\limits_{\mathclap{B(0,1/t)}} \frac{1}{t^{(n+1)/2}}\, h\!\left(y\right)\cdot \frac{1}{t^n}\,p_1\!\left( \tfrac{x}{t}, y \right) t^n\dys
        \\
        &=
        t^{-(n+1)/2} v_t\!\left( \tfrac{x}{t}, 1  \right),
    \end{align*}
    where
    \begin{align*}
        v_s(x,t) :=
        \int\limits_{\mathclap{B(0,1/s)}}
        h(y)p_t(x,y) \dys.
    \end{align*}
    Analogously,
    \begin{align*}
        v(x,t) = t^{-(n+1)/2} v\!\left( \tfrac{x}{t}, 1  \right),
        \quad
        t>0,\,x\in\Rn.
    \end{align*}

    Now we can rewrite the square function of $f$ as follows:
    \begin{align}
    \label{eq:cex:GCauchy2}
        (H(x))^2
        &=
        \mathcal{A}_{n,1}
        \int\limits_0^{\infty}\int\limits_{\Rn}\int\limits_{\Rn}
        \frac{(P_tf(y)-P_tf(z))^2}{|z-y|^{n+1}}
        \,
        p_t(x,z)
        \dys\dz\dt
        \notag\\	
        &=
        \mathcal{A}_{n,1}
        \int\limits_0^{\infty}\int\limits_{\Rn}\int\limits_{\Rn}
        \frac{1}{t^{n+1}}
        \cdot
        \frac{(v_t(\tfrac{y}{t},1)-v_t(\tfrac{z}{t},1))^2}{|z-y|^{n+1}}
        \,
        p_t(x,z)
        \dys\dz\dt
        \notag\\	
        &=
        \mathcal{A}_{n,1}
        \int\limits_0^{\infty} \int\limits_{\Rn} \int\limits_{\Rn}
        \frac{1}{t}
        \cdot
        \frac{(v_t(y,1)-v_t(\tfrac{z}{t},1))^2}{|z-ty|^{n+1}}
        \,
        p_t(x,z)
        \dys\dz\dt
        \notag\\	
        &=
        \mathcal{A}_{n,1}
        \int\limits_0^{\infty} \int\limits_{\Rn} \int\limits_{\Rn}
        t^{n-1}
        \cdot
        \frac{(v_t(y,1)-v_t(z,1))^2}{|tz-ty|^{n+1}}
        \,
        p_t(x,tz)
        \dys\dz\dt
        \notag\\	
        &=
        \mathcal{A}_{n,1} c_n
        \int\limits_{\Rn} \int\limits_{\Rn}
        \int\limits_0^{\infty}
        \frac{1}{t}
        \cdot
        \frac{(v_t(y,1)-v_t(z,1))^2}{|z-y|^{n+1} (t^2+\abs{tz-x}^2)^{(n+1)/2}}
        \dts\dy\dz.
    \end{align}
    In the last line, we used Tonelli's theorem.
    Observe that $v_t(y,1)\to v(y,1)>0$ as $t\to0^+$. Therefore, we have the following asymptotic equivalence of the integrand. For $x\neq0$,
    \begin{align*}
        \frac{1}{t}
        \cdot
        \frac{(v_t(y,1)-v_t(z,1))^2}{|z-y|^{n+1} (t^2+\abs{tz-x}^2)^{(n+1)/2}}
        \sim
        \frac{(v(y,1)-v(z,1))^2}{|z-y|^{n+1}\abs{x}^{n+1}}
        \cdot
        \frac{1}{t},
        \quad
        \text{as }
        t\to0^+,
    \end{align*}
    and when $x=0$,
    \begin{align*}
        \frac{1}{t}
        \cdot
        \frac{(v_t(y,1)-v_t(z,1))^2}{|z-y|^{n+1} (t^2+\abs{tz-x}^2)^{(n+1)/2}}
        \sim
        \frac{(v(y,1)-v(z,1))^2}{|z-y|^{n+1}(1+\abs{z}^2)^{(n+1)/2}}
        \cdot
        \frac{1}{t^{n+2}},
        \quad
        \text{as }
        t\to0^+.
    \end{align*}
    In both cases, the inner integral of \eqref{eq:cex:GCauchy2} diverges to infinity. Therefore, $H\equiv\inft$.
\end{ex}

\begin{proof}[Proof of Theorem~\ref{thm:H_less_2inf}]
    Initially, we assume that $f\in\DE$. For $p\ge2$, Jensen's inequality yields
    \begin{align*}
     \intE (H(x))^{p}\,m(\dx)
     & =
     \intE \left(\int\limits_0^\inft P_t\Gamma[P_tf](x)  \dts\right)^{p/2}\,m(\dx)
     \\
     & =
     \intE \left(\Ev_x \int\limits_0^\inft \Gamma[P_tf](X_t) \dts\right)^{p/2}\,m(\dx)
     \\
     & \le
     \intE \Ev_x \left(\int\limits_0^\inft \Gamma[P_tf](X_t) \dts\right)^{p/2}\,m(\dx)
     \\
     & =
     \lim_{T \to \inft} \intE \Ev_x \left(\int\limits_0^T \Gamma[P_tf](X_t) \dts\right)^{p/2}\,m(\dx) .
    \end{align*}
    In the last line, we used the monotone convergence theorem.
    Since the semigroup is conservative, we can utilize self-duality of the process $\Xt$ (see~\eqref{eq:duality}) to write
    \begin{align*}
     \intE (H(x))^{p}\,m(\dx) & \le \lim_{T \to \inft} \intE \Ev_x \left(\int\limits_0^T \Gamma[P_{T-t}f](X_t) \dt\right)^{p/2}\,m(\dx) .
    \end{align*}
    Let $\Mt$ be the martingale given by \eqref{eq:Mt}.
    By the assumption $f\in\DE$ and Corollary~\ref{cor:sharpMt},
    \begin{align*}
        \intE \Ev_x \left(\int\limits_0^T \Gamma[P_{T-t}f](X_t) \dt\right)^{p/2}\,m(\dx) =
        2^{-p/2}
        \intE \Ev_x (\langle M \rangle_T)^{p/2}\,m(\dx).
    \end{align*}
    Employing the Burkholder--Davies--Gundy inequality~\eqref{sim:bdg}, together with inequality \eqref{eq:brackets-}, we obtain
    \begin{align*}
         \intE (H(x))^{p}\,m(\dx)
         & \lesssim
         \lim_{T \to \inft} \intE \Ev_x (\langle M \rangle_T)^{p/2}\,m(\dx) \\
         & \lesssim
         \lim_{T \to \inft} \intE \Ev_x ([M]_T)^{p/2}\,m(\dx) \\
         & \lesssim
         \lim_{T \to \inft} \intE \Ev_x |M_T|^p\,m(\dx) \\
         & = \lim_{T \to \inft} \intE \Ev_x |f(X_T) - P_Tf(x)|^p\,m(\dx) .
    \end{align*}
    Since $|a + b|^p \le 2^{p - 1} (|a|^p + |b|^p)$, we get
    \begin{align*}
         \intE (H(x))^{p}\,m(\dx)
         & \lesssim
         \lim_{T \to \inft} \intE (\Ev_x |f(X_T)|^p + |P_Tf(x)|^p)\,m(\dx) \\
         & = \lim_{T \to \inft} \intE (P_T [|f|^p](x) + |P_Tf(x)|^p)\,m(\dx).
    \end{align*}
    Due to the contraction property of $\Pt$, it follows that $\normLp{P_Tf}^p\le \normLp{f}^p$.
    By the conservativeness of $\Pt$, we have
    $\int_E P_T [|f|^p](x)\,m(\dx)=\int_E |f(x)|^p\,m(\dx)$.
    Summarizing,
    \begin{align*}
        \intE (H(x))^{p}\,m(\dx)  & \lesssim \intE |f(x)|^p\,m(\dx).
    \end{align*}

    At this point, we relax the assumption $f\in\DE$.
    Let $f$ be an arbitrary function in $\LpE$.
    Since the Dirichlet form is regular, we may take the sequence $(f_n)$ of compactly supported functions from $\DE$ such that it converges almost everywhere to $f$.

    By $H[f_n]$ we denote the square function $H$ applied to $f_n$.
    Then, by Fatou's lemma,
    \begin{align*}
        \intE (H(x))^{p}\,m(\dx)
        & \leq
        \lim_{n\to\inft} \intE (H[f_n](x))^{p}\,m(\dx)
        \\
        & \lesssim
        \lim_{n\to\inft} \intE |f_n(x)|^p\,m(\dx)
        =
        \intE |f(x)|^p\,m(\dx).
    \end{align*}
\end{proof}

\begin{proof}[Proof of Theorem~\ref{thm:H_great_12}]
    As in the previous proof, we start with $f\in\DE$. We now consider the case $p<2$; hence by Jensen's inequality, we obtain
    \begin{align*}
         \intE (H(x))^{p}\,m(\dx) & = \intE \left(\int\limits_0^\inft P_t \Gamma[P_tf](x) \dts\right)^{p/2}\,m(\dx)
         \\
         & =
         \intE \left(\Ev_x \int\limits_0^\inft \Gamma[P_tf](X_t) \dts\right)^{p/2}\,m(\dx)
         \\
         & \ge
         \intE \Ev_x \left(\int\limits_0^\inft \Gamma[P_tf](X_t) \dts\right)^{p/2}\,m(\dx)
         \\
         & =
         \lim_{T \to \inft} \intE \Ev_x \left(\int\limits_0^T \Gamma[P_tf](X_t) \dts\right)^{p/2}\,m(\dx) .
    \end{align*}
    The last line follows from the monotone convergence theorem.
    By self-duality of the process $\Xt$ (formula~\eqref{eq:duality}),
    \begin{align*}
     \intE (H(x))^{p}\,m(\dx) & \ge \lim_{T \to \inft} \intE \Ev_x \left(\int\limits_0^T \Gamma[P_{T-t}f](X_t) \dts\right)^{p/2}\,m(\dx) .
    \end{align*}
    We again employ the martingale $\Mt$ given by \eqref{eq:Mt}.
    Given that $f\in\DE$, we deduce from Corollary~\ref{cor:sharpMt} that
    \begin{align*}
        \intE \Ev_x \left(\int\limits_0^T \Gamma[P_{T-t}f](X_t) \dt\right)^{p/2}\,m(\dx) =
        2^{-p/2}
        \intE \Ev_x (\langle M \rangle_T)^{p/2}\,m(\dx),
    \end{align*}
    By the Burkholder--Davies--Gundy inequality~\eqref{sim:bdg}, together with inequality \eqref{eq:brackets+}, we get
    \begin{align*}
         \intE (H(x))^{p}\,m(\dx)
         & \gtrsim
         \lim_{T \to \inft} \intE \Ev_x (\langle M \rangle_T)^{p/2}\,m(\dx)
         \\
         & \gtrsim
         \lim_{T \to \inft} \intE \Ev_x ([M]_T)^{p/2}\,m(\dx)
         \\
         & \gtrsim
         \lim_{T \to \inft} \intE \Ev_x |M_T|^p\,m(\dx)
         \\
         & =
         \lim_{T \to \inft} \intE \Ev_x |f(X_T) - P_Tf(x)|^p\,m(\dx) .
    \end{align*}
    By the elementary estimate $|a + b|^p \ge 2^{1 - p} |a|^p - |b|^p$, we get
    \begin{align*}
         \intE (H(x))^{p}\,m(\dx) 
         & \gtrsim
         \lim_{T \to \inft} \intE (\Ev_x |f(X_T)|^p - 2^{p - 1} |P_Tf(x)|^p)\,m(\dx)
         \\
         & =
         \lim_{T \to \inft} \intE (P_T [|f|^p](x) - 2^{p - 1}  |P_Tf(x)|^p)\,m(\dx).
    \end{align*}
    By the conservativeness of $\Pt$, we have
    $\int_E P_T [|f|^p](x)\,m(\dx)=\int_E |f(x)|^p\,m(\dx)$.
    Moreover, under Assumption~\ref{ass:SS},
    $\lim_{T \to \inft} \int_E |P_Tf(x)|^p\,m(\dx) = 0$. Hence, finally,
    \begin{align*}
     \intE (H(x))^{p}\,m(\dx)  & \gtrsim \intE |f(x)|^p\,m(\dx) .
    \end{align*}

    We relax the assumption $f\in\DE$, consider an arbitrary $f\in\LpE\cap\LE{2}$, and approximate $f$ by $P_sf$ for some $s>0$.
    Observe that since $f\in\LE{2}$, we have $P_sf\in\DE$.
    
    Denote by $H[P_sf]$ the square function $H$ acting on $P_sf$, namely, $H[P_sf]$ is given by
    \begin{align*}
        H[P_sf](x) = \left(
        \int\limits_0^\inft P_t \Gamma [P_tP_sf](x) \dts 
        \right)^{1/2}
        =
        \left(
        \int\limits_0^\inft P_t \Gamma [P_{s+t}f](x) \dts 
        \right)^{1/2}
        .
    \end{align*}
    By the monotone convergence theorem,
    \begin{align}
    \label{eq:PsH[Psf]}
        (H(x))^2
        &=
        \lim_{s\to0^+} \int\limits_s^\inft P_t \Gamma [P_tf](x) \dts
        =
        \lim_{s\to0^+} \int\limits_0^\inft P_sP_t \Gamma [P_tP_sf](x) \dts
        \notag \\
        &=
        \lim_{s\to0^+} P_s(H[P_sf]^2)(x).
    \end{align}
    
    Since the result holds for $P_sf$,
    \begin{align*}
        \intE (H[P_sf](x))^{p}\,m(\dx)  & \gtrsim \intE |P_sf(x)|^p\,m(\dx).
    \end{align*}
    Combining this with the monotone convergence theorem and Jensen's inequality, we obtain
    \begin{align*}
        \intE (H(x))^{p}\,m(\dx) 
        &=
        \lim_{s\to0^+}
        \intE \left(P_s(H[P_sf]^2)(x)\right)^{p/2}\,m(\dx)
        \ge
        \lim_{s\to0^+}
        \intE (H[P_sf](x))^{p}\,m(\dx)
        \\
        & \gtrsim
        \lim_{s\to0^+}
        \intE |P_sf(x)|^p\,m(\dx)
        =
        \intE |f(x)|^p\,m(\dx).
    \end{align*}
    The last line follows from
    the strong continuity of the semigroup $\Pt$.
\end{proof}

\subsection{Square function $\widetilde{H}$}
\label{sub:Hprim_func}

The last subsection is dedicated to the last square function:
\begin{align*}
    \widetilde{H}(x) & =
    \left(
        \int\limits_0^\inft P_t \Gampri[P_tf](x)\dts 
    \right)^{1/2}.
\end{align*}

We conjecture that the above function resolves the issue of $L^p$-unboundedness, which affects $\widetilde{G}$ for $p>2$ (see Example~\ref{cex:Brown}) and $H$ for $1<p<2$ (see Example~\ref{cex:Cauchy2}).
However, the method for establishing Littlewood--Paley estimates for this function remains unclear at this point.
Methods based on the Hardy--Stein identity seem to be well-suited for the functions $G$ and $\widetilde{G}$,
whereas martingale methods appear to be more applicable to the function $H$.

On the one hand, we are able to obtain the upper bound directly from Theorem~\ref{thm:H_less_2inf} and the fact that $\widetilde{H} \le \sqrt{2} H$.
\begin{prop}
\label{prop:Hprim_less_2inf}
    Impose Assumption~\ref{ass:coserv}. Let $2\le p<\infty$ and $f\in\LpE$. Then,
    \begin{align*}
        \normLp{\widetilde{H}} \lesssim \normLp{f}.
    \end{align*}
\end{prop}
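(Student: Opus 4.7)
The plan is to obtain this upper bound as an immediate consequence of the corresponding bound for $H$ (namely Theorem~\ref{thm:H_less_2inf}) combined with the pointwise domination $\widetilde{H} \le \sqrt{2}\, H$. This domination was already recorded in Subsection~\ref{sub:square_functions}, where it was observed that $\Gampri[u] \le 2\Gamma[u]$ a.e., so I would start by invoking that inequality explicitly.

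Next, since $P_t$ is a positive (sub-Markovian) operator, applying $P_t$ to both sides of $\Gampri[P_tf] \le 2\Gamma[P_tf]$ preserves the inequality, giving $P_t \Gampri[P_tf] \le 2 P_t \Gamma[P_tf]$ a.e. for every $t \ge 0$. Integrating in $t$ over $(0,\infty)$ yields
\begin{align*}
    (\widetilde{H}(x))^2 = \int\limits_0^\inft P_t \Gampri[P_tf](x)\dts
    \;\le\; 2 \int\limits_0^\inft P_t \Gamma[P_tf](x)\dts
    \;=\; 2\,(H(x))^2,
\end{align*}
so that $\widetilde{H} \le \sqrt{2}\, H$ pointwise a.e.

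Finally, taking $p$-norms and invoking Theorem~\ref{thm:H_less_2inf} (which is applicable thanks to the standing conservativeness Assumption~\ref{ass:coserv} and the assumption $p \ge 2$) gives
\begin{align*}
    \normLp{\widetilde{H}} \le \sqrt{2}\, \normLp{H} \lesssim \normLp{f},
\end{align*}
which is the desired estimate. There is no substantive obstacle here: the only thing to check is that the elementary pointwise bound $\Gampri[u] \le 2\Gamma[u]$ transfers through the positive operator $P_t$ and through the $t$-integration, both of which are routine. The content of the proposition lies entirely in Theorem~\ref{thm:H_less_2inf}.
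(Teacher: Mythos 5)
Your proof is correct and is exactly the paper's argument: the paper derives Proposition~\ref{prop:Hprim_less_2inf} directly from Theorem~\ref{thm:H_less_2inf} together with the pointwise bound $\widetilde{H}\le\sqrt{2}\,H$, which in turn comes from $\Gampri[u]\le 2\Gamma[u]$ as you describe. No differences to report.
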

On the other hand, when $p$ is sufficiently large for the estimate $|P_t f|^{p - 2} \le P_t(|f|^{p - 2})$ a.e. to hold, one may adapt the procedure from the proof of Theorem~\ref{thm:Gprim_less_12}, which is based on the Hardy--Stein identity, to obtain the lower bound of $\widetilde{H}$.
Nevertheless, it remains an open question whether this statement holds for $p\in(2,3)$.
\begin{prop}
\label{prop:Hprim_great_3inf}
    Impose Assumption~\ref{ass:SS}.
    Let $3\le p<\infty$ and $f\in\LpE$. Then,
    \begin{align*}
        \normLp{f} \lesssim \normLp{\widetilde{H}}.
    \end{align*}
\end{prop}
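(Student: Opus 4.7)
The plan is to mimic the argument for the lower bound of $\widetilde{G}$ given in the proof of Theorem~\ref{thm:Gprim_less_2inf}, replacing the Stein maximal inequality step by a combination of a Jensen-type inequality for the sub-Markovian semigroup and the self-adjointness of $P_t$ on $\LE{2}$. Concretely, because the integrand in $\widetilde{H}$ contains an outer $P_t$, we can move this $P_t$ onto a factor $|P_tf|^{p-2}$ via duality, provided this factor is itself dominated by $P_t(|f|^{p-2})$.

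First, under Assumption~\ref{ass:SS}, the Hardy--Stein identity combined with \eqref{eq:iiFp_p1} gives, for any $p\ge 2$,
\begin{align*}
\normLp{f}^p
\asymp
\int\limits_0^\inft \intE \Gampri[P_tf](x) \abs{P_tf(x)}^{p-2} \dms\dt.
\end{align*}
Since $p-2\ge 1$ exactly when $p\ge 3$, Jensen's inequality applied to the convex function $u\mapsto \abs{u}^{p-2}$ and to the sub-Markovian kernel $P_t$ yields $\abs{P_tf(x)}^{p-2}\le P_t(\abs{f}^{p-2})(x)$ a.e. (this uses $P_t1\le 1$ to handle the possible sub-probabilistic mass, writing $(P_t\abs{f})^{p-2}\le (P_t1)^{p-3}P_t(\abs{f}^{p-2})\le P_t(\abs{f}^{p-2})$).

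Plugging this in and using the symmetry of $P_t$ on $\LE{2}$, then Tonelli,
\begin{align*}
\normLp{f}^p
&\lesssim
\int\limits_0^\inft \intE \Gampri[P_tf](x) P_t(\abs{f}^{p-2})(x) \dms\dt
\\
&=
\int\limits_0^\inft \intE P_t\Gampri[P_tf](x) \abs{f(x)}^{p-2} \dms\dt
=
\intE (\widetilde{H}(x))^2 \abs{f(x)}^{p-2} \dms.
\end{align*}
Now Hölder's inequality with conjugate exponents $p/2$ and $p/(p-2)$ closes the estimate:
\begin{align*}
\normLp{f}^p \lesssim \normLp{\widetilde{H}}^{2} \normLp{f}^{p-2},
\end{align*}
which yields the asserted bound $\normLp{f}\lesssim \normLp{\widetilde{H}}$ after dividing by $\normLp{f}^{p-2}$ (which is finite since $f\in\LpE$; if it is zero, the claim is trivial).

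The main obstacle is the Jensen step: for $p<3$ the function $u\mapsto \abs{u}^{p-2}$ is concave, so $\abs{P_tf}^{p-2}\le P_t(\abs{f}^{p-2})$ fails in general and the method breaks down. This is exactly the structural reason why the result is obtained only for $p\ge 3$ and the range $2<p<3$ has to be left open. A secondary, minor, technical point is measurability/finiteness to justify Tonelli when moving $P_t$ across the $\dts$ integral; this can be handled by first considering nonnegative truncations of $f$, or by approximating $f$ by bounded compactly supported functions in $\DE$ (as done in the proof of Theorem~\ref{thm:H_less_2inf}) and passing to the limit with Fatou.
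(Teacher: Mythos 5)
Your proposal is correct and follows essentially the same route as the paper's proof: the Hardy--Stein equivalence \eqref{sim:HS_2pinf}, the Jensen-type bound $|P_tf|^{p-2}\le P_t(|f|^{p-2})$ valid precisely because $p\ge 3$ makes $u\mapsto|u|^{p-2}$ convex, then symmetry of $P_t$ and Hölder with exponents $p/2$ and $p/(p-2)$. The extra remarks on the sub-Markovian Jensen step and on Tonelli are fine but not needed beyond what the paper already does.
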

\begin{proof}
    Since $p \ge 3$, the function $a\mapsto |a|^{p-2}$ is convex. Thus, by Jensen's inequality,
    $|P_t f|^{p - 2} \le P_t(|f|^{p - 2})$ a.e.
    Therefore, by~\eqref{sim:HS_2pinf}, the symmetry of the operator $P_t$, and Hölder's inequality
    \begin{align*}
     \intE |f(x)|^p\,m(\dx)
     & \asymp
     \int\limits_0^\inft  \intE \Gampri[P_tf](x) |P_t f(x)|^{p - 2} \,m(\dx)\dt
     \\
     & \le
     \int\limits_0^\inft  \intE \Gampri[P_tf](x) P_t(|f|^{p - 2})(x)\,m(\dx)\dt
     \\
     & =
     \int\limits_0^\inft
     \intE P_t \Gampri[P_tf](x) |f(x)|^{p - 2}\,m(\dx)\dt
     \\
     & =
     \intE (\widetilde{H}(x))^2 |f(x)|^{p - 2}\,m(\dx)
     \\
     & \le
     \biggl(\intE (\widetilde{H}(x))^{p}\,m(\dx)\biggr)^{2/p} \biggl(\intE |f(x)|^p\,m(\dx)\biggr)^{1 - 2/p} .
    \end{align*}
    This implies
    \begin{align*}
     \intE |f(x)|^p\,m(\dx) & \lesssim \intE (\widetilde{H}(x))^{p}\,m(\dx) .
    \end{align*}
\end{proof}

\subsection*{Acknowledgements}

The author is grateful to Mateusz Kwaśnicki for a helpful discussion, numerous suggestions, and the construction of Example~\ref{cex:Brown}. He is especially thankful for Mateusz's generous support during the final stage of preparing this paper.
The author also wants to thank Błażej Wróbel for helpful comments on the constant $D_p$ in Stein's maximal theorem.

This research was funded in whole or in part by National Science Centre, Poland, 2023/49/B/ST1/04303. For the purpose of Open Access, the authors have applied a~CC\nobreakdash-BY public copyright licence to any Author Accepted Manuscript (AAM) version arising from this submission.

\bibliographystyle{plain}
\bibliography{biblio_d}

\end{document}